\numberwithin{equation}{section} \swapnumbers
\setlist[enumerate,1]{label=\textit{\alph*)},ref=\textit{\alph*})}
\setlist[enumerate,2]{label=\textit{\roman*)},ref=\textit{\roman*})}
\newtheorem{theorem}{Theorem}[section]
\newaliascnt{lemma}{theorem}
\newtheorem{lemma}[lemma]{Lemma}
\newaliascnt{corollary}{theorem}
\newtheorem{corollary}[corollary]{Corollary}
\newaliascnt{proposition}{theorem}
\newtheorem{proposition}[proposition]{Proposition}
\theoremstyle{definition}
\newaliascnt{definition}{theorem}
\newtheorem{definition}[definition]{Definition}
\newtheorem*{ack}{Acknowledgment}
\newtheorem*{remark}{Remark}
\newtheorem*{remarks}{Remarks}
\newcommand{\cxymatrix}[1]{\vcenter{\xymatrix@=15pt{#1}}}
\def\Ddot(#1,#2;#3){\filldraw (#1,#2) node[above]{\hbox to
    0pt{\hss$#3$\hss}} circle (3pt);}
\def\Dline(#1,#2){\draw[thick] (#1,#2)--+(1,0);}
\def\Drarrow(#1,#2){%
  \draw[thick] (#1,#2+1pt)--+(0.85,0);
  \draw[thick] (#1,#2-1pt)--+(0.85,0);
  \draw[thick] (#1+0.9,#2)--+(-8pt,4pt);
  \draw[thick] (#1+0.9,#2)--+(-8pt,-4pt); }
\def\Dlarrow(#1,#2){%
  \draw[thick] (#1,#2+1pt)--+(-0.85,0);
  \draw[thick] (#1,#2-1pt)--+(-0.85,0);
  \draw[thick] (#1-0.9,#2)--+(8pt,4pt);
  \draw[thick] (#1-0.9,#2)--+(8pt,-4pt); }
\def\Dlrarrow(#1,#2){%
  \draw[thick] (#1+0.15,#2+1pt)--+(0.7,0);
  \draw[thick] (#1+0.15,#2-1pt)--+(0.7,0);
  \draw[thick] (#1+0.9,#2)--+(-8pt,4pt);
  \draw[thick] (#1+0.9,#2)--+(-8pt,-4pt);
  \draw[thick] (#1+0.1,#2)--+(8pt,4pt);
  \draw[thick] (#1+0.1,#2)--+(8pt,-4pt); }
\def\Ddots(#1,#2){%
  \draw[thick] (#1,#2)--+(0.6,0);
  \filldraw (#1+0.8,#2) circle (0.5pt);
  \filldraw (#1+1,#2) circle (0.5pt);
  \filldraw (#1+1.2,#2) circle (0.5pt);
  \draw[thick] (#1+1.4,#2)--(#1+2,#2); }
\newcommand{\cA}{\mathcal{A}}
\newcommand{\cC}{\mathcal{C}}
\newcommand{\cX}{\mathcal{X}}
\newcommand{\cF}{\mathcal{F}}
\newcommand{\cL}{\mathcal{L}}
\newcommand{\cO}{\mathcal{O}}
\newcommand{\cP}{\mathcal{P}}
\newcommand{\fa}{\mathfrak{a}}
\newcommand{\fL}{\mathfrak{L}}
\newcommand{\fk}{\mathfrak{k}}
\newcommand{\fl}{\mathfrak{l}}
\newcommand{\fp}{\mathfrak{p}}
\newcommand{\Fq}{\mathfrak{q}}
\newcommand{\fK}{\mathfrak{K}}
\newcommand{\ft}{\mathfrak{t}}
\newcommand{\fC}{\mathfrak{C}}
\newcommand{\fT}{\mathfrak{T}}
\newcommand{\fS}{\mathfrak{S}}
\newcommand{\CC}{\mathbb{C}}
\newcommand{\HH}{\mathbb{H}}
\newcommand{\RR}{\mathbb{R}}
\newcommand{\ZZ}{\mathbb{Z}}
\newcommand{\sA}{\mathsf{A}}
\newcommand{\sB}{\mathsf{B}}
\newcommand{\sD}{\mathsf{D}}
\newcommand{\sG}{\mathsf{G}}
\renewcommand{\P}{\mathbf{P}}
\newcommand{\Vfa}{{\overline\fa}}
\newcommand{\fq}{{\overline f}}
\newcommand{\Valpha}{{\overline\alpha}}
\newcommand{\Vbeta}{{\overline\beta}}
\newcommand{\Vw}{{\overline w}}
\newcommand{\VW}{{\overline W}}
\newcommand{\VPhi}{{\overline\Phi}}
\newcommand{\oS}{{\overline S}}
\newcommand{\thetaq}{{\overline\theta}}
\newcommand{\tauq}{{\overline\tau}}
\newcommand{\uq}{{\overline u}}
\renewcommand{\[}{\begin{equation}}
\renewcommand{\]}{\end{equation}}
\newcommand{\<}{\langle}
\renewcommand{\>}{\rangle}
\renewcommand{\epsilon}{\varepsilon}
\renewcommand{\rho}{\varrho}
\renewcommand{\mod}{/\!\!/}
\renewcommand{\phi}{\varphi}
\renewcommand{\hat}{\widehat}
\newcommand{\leer}{\varnothing}
\newcommand{\into}{\hookrightarrow}
\newcommand{\Pfeil}{\longrightarrow}
\newcommand\otau{\,{}^\tau\!}
\newcommand{\qH}{quasi-Hamiltonian}
\newcommand{\mf}{multiplicity free quasi-Hamiltonian}
\newcommand{\MF}{{\mathsf{MF}}}
\newcommand\Samb{S_{\rm amb}}
\newcommand{\Times}{\mathop{\times}\limits}
\newcommand{\gl}{_{\|gl|}}
\newcommand{\half}{{\textstyle\frac12}}
\newcommand{\viertel}{{\textstyle\frac14}}
\newcommand{\OMEGA}{\cL^1}
\newcommand{\prim}{^{\text{prim}}}
\DeclareMathOperator{\OG}{O}
\DeclareMathOperator{\Ad}{Ad}
\DeclareMathOperator{\Hom}{Hom}
\DeclareMathOperator{\Aut}{Aut}
\DeclareMathOperator{\cAut}{\underline{\mathrm{Aut}}}
\DeclareMathOperator{\res}{res}
\DeclareMathOperator{\Lie}{Lie}
\DeclareMathOperator{\rk}{rk}
\DeclareMathOperator{\pr}{pr}
\DeclareMathOperator{\id}{id}
\DeclareMathOperator{\Gr}{Gr}
\let\norm=\|
\def\|#1|{\operatorname{#1}}
\title[]{Multiplicity free quasi-Hamiltonian manifolds}
\author[]{Friedrich Knop\newline FAU Erlangen-Nürnberg}
\address[]{Department Mathematik, Emmy-Noether-Zentrum, Cauerstraße
  11, 91058 Erlangen, Germany}
\subjclass[2010]{53D20, 57S25, 14M27}
\begin{document}

\setlength\mathindent{60pt}

\begin{abstract}

  A (quasi-)Hamiltonian manifold is called multiplicity free if all
  of its symplectic reductions are 0-dimensional. In this paper, we
  classify multiplicity free Hamiltonian actions for (twisted) loop
  groups or, equivalently, multiplicity free (twisted) \qH\ manifolds
  for simply connected compact Lie groups. As a result we recover
  old and find new examples of these structures.

\end{abstract}

\maketitle

\section{Introduction}\label{Intro}

In \cite{AMM}, Alekseev-Malkin-Meinrenken introduced the concept of a
\emph{group valued moment map} and showed that it is essentially
equivalent to the concept of Hamiltonian loop groups actions. The
advantage of the former is that all objects are finite dimensional. As
opposed to the ordinary moment map which takes values in the coadjoint
representation of the acting Lie group $K$, a group valued moment map
takes values in $K$ itself (which, in this paper, is usually assumed
to be compact and simply connected). A manifold equipped with a group
valued moment map is called \emph\qH.

Like Hamiltonian manifolds, \qH\ manifolds also have a notion of
symplectic reduction and the dimension of the symplectic reductions of
a \qH\ manifold serves as a measure for its size. At the bottom of
this hierarchy lie the so called \emph{multiplicity free} (\qH)
manifolds, i.e., those manifolds having $0$-dimensional
reductions. The purpose of the present paper is to classify \mf\
manifolds under two technical conditions: the group $K$ should be
simply connected and the manifold should be convex. The first
condition is essential while the second is rather mild and holds
automatically for compact manifolds.

Actually, it turned out to be more natural to consider also
\emph{twisted} \qH\ manifolds. This means that $K$ is equipped with an
automorphism $\tau$ such that the moment map to $K$ is equivariant
with respect to twisted conjugation $g\mapsto kg\tau(k)^{-1}$. In
analogy with \cite{AMM}, twisted \qH\ manifolds correspond to
Hamiltonian actions of twisted loop groups. Here a twisted loop is a
map $g:\RR\to K$ satisfying the periodicity condition
$g(t+1)=\tau(g(t))$.

The classification is in terms of two data. The first encodes the
image of the moment map $m:M\to K$ . For this recall that the set of
(twisted) conjugacy classes of $K$ is in bijection with an alcove
$\cA$ for a certain affine root system (see, e.g.,
\cite{MW}). Therefore, the image of $m$ is determined by a subset
$\cP_M\subseteq\cA$ which turns out to be a convex polyhedron, if $M$
is compact and which is, in general, locally polyhedral. The second
datum is a lattice $\Lambda_M$ which encodes the principal isotropy
group of $K$ on $M$. These two objects $\cP_M$ and $\Lambda_M$ satisfy
certain compatibility conditions called sphericality (see below for
more) and our main result, \cref{thm:main}, says that convex \mf\
manifolds are classified by spherical pairs $(\cP,\Lambda)$. Observe
that the uniqueness part is a \qH\ generalization of the Delzant
conjecture \cite{Delzant} (which is proved in \cite{KnopAuto}).

The main application of our classification is the construction of \qH\
manifolds. This is much harder than in the Hamiltonian setting since
given a subgroup $H\subseteq K$ it is, in general, not possible to
restrict a $K$-valued moment map to an $H$-valued one. So our approach
is to construct spherical pairs $(\cP,\Lambda)$ which then lead to
\qH\ manifolds.

Using this technique, we are able to recover most examples which have
been previously constructed ``by hand'': The double of a group by
Alekseev-Malkin-Meinrenken, \cite{AMM}, the spinning $4$-sphere by
Alekseev-Meinrenken-Woodward, \cite{AMW}, its generalization, the
spinning $2n$-sphere by Hurtubise-Jeffreys-Sjamaar, \cite{HJS}, and
the quaternionic projective space due to Eshmatov, \cite{Eshmatov}.

Beyond that, we show that more generally the quaternionic
Grassmannians $\Gr_k(\HH^{n+1})$ carry a \qH\ $Sp(2n)$-structure. We
also find compact \qH\ manifolds for the groups $SU(n)$ and $Sp(2n)$,
respectively, for which the moment map is surjective, something which
has no analogue for Hamiltonian manifolds. On the side we observe that
the product of any two symmetric spaces for the same group with
diagonal action (we call them \emph{disymmetric}) carries a \mf\
structure. This explains many of their nice invariant theoretic
properties.

Some words on the sphericality condition. Recall that a (complex
algebraic) variety $X$ with an action of a connected reductive group
$G$ is called \emph{spherical} if a Borel subgroup of $G$ has a dense
open orbit in $X$. When $X$ is affine this has a purely representation
theoretic interpretation: $X$ is spherical if and only if its
coordinate ring $\CC[X]$ is a multiplicity free $G$-module. In that
case, $\CC[X]$ is, as a $G$-module, uniquely determined by the set
$\Lambda^+_X$ of highest weights occurring in it. If $X$ is
additionally smooth then it is even uniquely determined by
$\Lambda_X^+$ (Losev \cite{Losev}).

Now it follows from work of Brion \cite{Brion} and Sjamaar
\cite{Sjamaar} that a multiplicity free manifold $M$ is locally
modeled after a smooth affine spherical variety. This means the
following: For any $x\in\cP_M$ let $L\subseteq K$ be the (twisted)
centralizer of $\exp(x)$ in $K$. Then an open $K$-invariant
neighborhood of $m^{-1}(x)$ in $M$ is isomorphic to an open subset of
a ``model space'' of the form $K\times^LX$ where $X$ is a smooth
affine spherical $L_\CC$-variety. Strictly speaking, Brion and Sjamaar
proved this only for ordinary Hamiltonian manifolds but using
techniques from \cite{AMM} it readily generalizes to the twisted \qH\
setting.

This has the following consequence for the pair
$(\cP,\Lambda):=(\cP_M,\Lambda_M)$. Let $C_X$ be the convex cone and
$\Lambda_X$ the abelian group $\Lambda_X$ generated by
$\Lambda_M^+$. Then
\[\label{eq:sph}
  C_x\cP=C_X\text{ and }\Lambda=\Lambda_X
\]
where $C_x\cP=\RR_{\ge0}(\cP-x)$ is the tangent cone of $\cP$ in
$x$. Conversely, we call a pair $(\cP,\Lambda)$ spherical if for every
$x\in\cP$ there is a smooth affine spherical $L_\CC$-variety $X$ such
that \eqref{eq:sph} holds.

Admittedly, the sphericality condition is not very explicit since it
involves finding an appropriate variety $X$ for every point
$x\in\cP$. This task is simplified by two facts: First of all, it
suffices to check it for representatives of the minimal faces of
$\cP$. So if $\cP$ is a polyhedron then it is enough to check the
vertices. Secondly, Van Steirteghem and the author have essentially
classified all smooth affine spherical varieties in \cite{KVS}. A
description of their weigh monoids will appear in joint work with
Pezzini and Van Steirteghem \cite{KPVS}. These works make it possible
to test sphericality in any given vertex. Conversely, one can use the
classification to look for local models such that the tangent cones
and the lattice (according to \eqref{eq:sph}) paste to a global
spherical pair. See section \ref{sec:examples} for examples on how
this strategy works.

This paper is, to a certain extent, a sequel of \cite{KnopAuto} where
analogous results were proved in the Hamiltonian
setting. Nevertheless, I have tried to make the present paper
self-contained enough such that at least the main results should be
understandable without consulting \cite{KnopAuto} or even
\cite{AMM}. On the other hand, for some of the main arguments,
especially the local structure of multiplicity free Hamiltonian
manifolds and their automorphism groups, we refer to
\cite{KnopAuto}. The cohomology computations in section
\ref{sec:cohomology} are quite a bit more involved for \qH\ manifolds
than those of \cite{KnopAuto}.

We include systematically the twisted case and most multiplicity free
examples in this setting seem to be new. After completion of a first
version of this paper, the author became aware of Meinrenken's
manuscript \cite{Meinrenken} which explicitly studies twisted \qH\
manifolds. Therefore, it has some overlap with sections
\ref{sec:loop}, \ref{sec:twisted}, and \ref{sec:localstructure}.

Finally some advice for reading the paper: I gathered all examples in
the final section \ref{sec:examples} but most of them can be
understood much sooner.

\begin{ack}

  I would like to thank Chris Woodward who, eons ago, suggested the
  topic of this paper to me. Thanks are also due to Kay Paulus and
  Bart Van Steirteghem for numerous discussions about this paper.

\end{ack}

\section{From Hamiltonian loop group spaces to \qH\ manifolds}
\label{sec:loop}

In this section, we give a brief introduction to Hamiltonian actions
of twisted loop groups. The untwisted case has been worked out by
Alekseev, Malkin and Meinrenken in their nice paper \cite{AMM}. For a
short survey see also \S1.4 of \cite{GS}. Therefore, the main purpose
of this and the following section is to make precise where to put the
twist $\tau$ in.

In the whole paper, let $K$ be a compact connected Lie group with Lie
algebra $\fk$. Let $\tau$ be a fixed ``twist'' of $K$ which simply
means a continuous automorphism of $K$. The action of $\tau$ on
$k\in K$ is going to be denoted by ${}^\tau k$. The induced
automorphism of $\fk$ will also be denoted by $\tau$.

We also fix a $K$- and $\tau$-invariant scalar product
$\<\cdot,\cdot\>$ on $\fk$. For semisimple $K$ one could take the
Killing form but it will be useful to also consider differently scaled
scalar products (see, e.g., the final remark of example~2 in section
\ref{sec:examples}).

The \emph{twisted loop group $\cL_\tau(K)$} is the set of smooth maps
$g:\RR\to K$ which are subject to the condition
\[
  g(t+1)={}^\tau g(t)\text{ for all $t\in\RR$}.
\]
It is a group under pointwise multiplication. Clearly, if $\tau=\id_K$
then $\cL_\tau(K)$ is just the group of smooth loops
$g:S^1=\RR/\ZZ\to K$.

The space $\cL_\tau(\fk)$ of smooth maps $\xi:\RR\to\fk$ with
$\xi(t+1)={}^\tau\xi(t)$ is the Lie algebra of $\cL_\tau(K)$. The
invariant scalar product on $\fk$ induces a scalar product on
$\cL_\tau(\fk)$ by
\[
  \<\xi,\eta\>_\cL:=\int_0^1\<\xi(t),\eta(t)\>dt.
\]
Observe that the integrand is periodic of period $1$, hence
integration over any interval of length $1$ yields the same result. It
also follows that
\[
  c(\xi,\eta):=\<\xi',\eta\>_\cL=-\<\xi,\eta'\>_\cL.
\]
(where $\xi'$, $\eta'$ are the derivatives with respect to $t$) is a
$2$-cocycle and therefore defines a central extension
$\hat\cL_\tau(\fk):=\cL_\tau(\fk)\oplus\RR \kappa$ of $\cL_\tau(\fk)$
by
\[ [\xi+s\kappa,\eta+t\kappa]:=[\xi,\eta]+c(\xi,\eta)\kappa.
\]
Dually, the space $\hat\cL_\tau(\fk^*):=\cL_\tau(\fk)\oplus\RR E$ is
in duality with $\hat\cL_\tau(\fk)$ by
\[
  \<A+sE,\xi+t\kappa\>_\cL:=\<A,\xi\>_\cL+st.
\]
The dual (coadjoint) action of $\cL_\tau(\fk)$ on
$\hat\cL_\tau(\fk^*)$ is then
\[
  \xi\cdot(A+sE)=[\xi,A]-t\xi'.
\]
Therefore the action of $g(t)\in\cL_\tau(K)$ on
$A(t)+sE\in\hat\cL_\tau(\fk^*)$ is
\[
  g\cdot(A+sE)=\left(\Ad(g)A-sg'\,g^{-1}\right)+sE.
\]
Since the so-called \emph{level} $s=\<\cdot,\kappa\>_\cL$ is invariant
under this action, the group $\cL_\tau(K)$ acts on the level-$1$-set
$\OMEGA_\tau(\fk):=\cL_\tau(\fk)+E$. Under the identification
$\cL_\tau(\fk)\cong\OMEGA_\tau(\fk):A\mapsto A+E$ this action becomes
\[\label{eq:action}
  g\cdot A=\Ad(g)A-g'\,g^{-1}.
\]

\begin{definition}

  A \emph{Hamiltonian $\cL_\tau(K)$-space} (of level $1$) is a Fréchet
  manifold $X$ equipped with an $\cL_\tau(K)$-action, a $2$-form
  $\sigma$, and map $\mu:X\to\OMEGA_\tau(\fk)$ such that

  \begin{enumerate}

  \item The map $\mu$ is smooth and equivariant with respect to the
    action \eqref{eq:action}.

  \item The $2$-form $\sigma$ is $\cL_\tau(K)$-invariant, closed and
    non-degenerate

  \item $\sigma(\xi x,\eta)=\<\xi,\mu_*\eta\>_\cL$ \text{ for all
      $\xi\in\cL_\tau(\fk)$ and $\eta\in T_xX$}

  \end{enumerate}

  The Hamiltonian space $X$ is of \emph{finite type} if $\mu$ is
  locally proper, i.e., every $x\in X$ has a (closed) neighborhood $U$
  such that $\mu|_U:U\to\OMEGA_\tau(\fk)$ is proper.

\end{definition}

\begin{remarks} \textit{a)} Because of its importance for geometry, it
  is customary to only consider a level of $1$. Clearly, by rescaling
  this encompasses also actions of any positive but constant level. As
  a matter of fact, it appears that the whole theory of this paper
  should generalize to actions of non-constant level as long as it
  stays positive, i.e., to Hamiltonian $\hat\cL_\tau(K)$-spaces with
  $\<\mu(x),\kappa\>_\cL>0$ for all $x\in X$. Clearly, in such
  generality the center of $\hat\cL_\tau(K)$ will act non-trivially on
  $X$.

  \textit{b)} The $\cL_\tau(\fk)$-equivariance of $m$ is equivalent to
  the formula
  \[
    \sigma(\xi x,\eta x)=\<[\xi,\eta],\mu(x)\>_\cL+c(\xi,\eta)\text{
      for all }x\in X\text{ and }\xi,\eta\in\cL_\tau(\fk).
  \]

\end{remarks}

Before we go on, we recall (and twist) the basic facts of the
coadjoint loop group action. The key is the observation that any
$A\in\OMEGA_\tau(\fk)$ defines a connection on the trivial $K$-bundle
$p:K\times\RR\to\RR$ (with $K$ acting on the left). Then the gauge
group $\cL_\tau(L)$ acts on the right of $K\times\RR$. The action on
connections is given by equation \eqref{eq:action}. More precisely, a
horizontal section of the connection corresponding to $A$ is a
solution $z:\RR\to K$ of the ordinary differential equation
\[\label{eq:ODE}
  z'(t)=z(t)A(t).
\]
Any other solution is of the form $kz(t)$ with $k\in K$. In
particular,
\[
  h_t(A):=z(0)^{-1}z(t)
\]
depends only on $A$ and not on the choice of $z$. The value at $t=1$
\[
  h(A):=h_1(A)=z(0)^{-1}z(1)\in K
\]
is called the \emph{holonomy of $A$}.  It is easily checked that if
$g(t)\in\cL_\tau(K)$ then $\tilde z(t):=z(t)g(t)^{-1}$ is a solution
of \eqref{eq:ODE} for $\tilde A:=g\cdot A$ which implies
\[\label{eq:taction}
  h_t(g\cdot A)=\tilde z(0)^{-1}\tilde
  z(t)=g(0)z(0)^{-1}z(t)g(t)^{-1}= g(0)\,h_t(A)\,g(t)^{-1}.
\]
In particular, if we put $t=1$ and observe that $g(1)=\otau g(0)$ we
get
\[\label{eq:twistedaction}
  h(g\cdot A)=g(0)\,h(A)\,\otau g(0)^{-1}.
\]

The evaluation map $g\mapsto g(0)$ induces the short exact sequence
\[
  1\mapsto\Omega_\tau(K)\to\cL_\tau(K)\overset\to K\to 1.
\]
where $\Omega_\tau(K)$ is the group of based (twisted) loops, i.e.,
with $g(0)=1$. Observe that, unlike in the untwisted case, the
evaluation homomorphism does not have a canonical section. Then
\eqref{eq:twistedaction} says that $h$ is $\cL_\tau(K)$-equivariant
where $K$ acts on itself by the twisted action
\[
  g\cdot_\tau k:=gk\otau g^{-1}.
\]
To distinguish the twisted action from the adjoint action we are going
to write $K\tau$ for $K$ when we mean the former. This makes sense
since, purely formally, we have
\[
  \Ad(g)(k\tau)=gk\tau g^{-1}=gk(\tau g^{-1}\tau^{-1})\tau=(gk\otau
  g^{-1})\tau.
\]
Of course, this calculation can be made rigorous by considering
$K\tau$ as a subset of the group $\ZZ\tau\ltimes K$.

The following fact is fundamental:

\begin{lemma}

  The map $h:\OMEGA_\tau(\fk)\to K\tau$ is an
  $\cL_\tau(K)$-equivariant principal fiber bundle for
  $\Omega_\tau(K)$.

\end{lemma}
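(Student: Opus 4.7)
The plan is to build on the equivariance formula \eqref{eq:twistedaction}, already derived in the excerpt, and to prove freeness and transitivity of $\Omega_\tau(K)$ on fibres of $h$ together with the existence of smooth local sections. Throughout, write $z_A\colon\RR\to K$ for the unique solution of \eqref{eq:ODE} with $z_A(0)=1$, so that $h(A)=z_A(1)$. The key preliminary to record is the \emph{twisted periodicity}
\[
  z_A(t+1)=h(A)\cdot{}^\tau z_A(t)\qquad\text{for all }t\in\RR,
\]
proved by noting that both $t\mapsto z_A(t+1)$ and $t\mapsto{}^\tau z_A(t)$ satisfy the ODE $y'=y\cdot{}^\tau A$ and comparing values at $t=0$.

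\emph{Freeness.} If $g\in\Omega_\tau(K)$ fixes $A$, the calculation preceding \eqref{eq:taction} makes $z_A\cdot g^{-1}$ a solution of \eqref{eq:ODE} with initial value $1$, so uniqueness forces $z_Ag^{-1}=z_A$ and $g=1$. \emph{Transitivity.} Given $A_1,A_2$ with $h(A_1)=h(A_2)=:k$, set
\[
  g(t):=z_{A_2}(t)^{-1}z_{A_1}(t).
\]
Then $g(0)=1$; applying the twisted periodicity above to both $z_{A_1}$ and $z_{A_2}$ (with the same $k$) gives $g(t+1)={}^\tau g(t)$, so $g\in\Omega_\tau(K)$; and the relation $z_{A_1}=z_{A_2}\cdot g$, together with the calculation preceding \eqref{eq:taction}, forces $g\cdot A_1=A_2$.

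For surjectivity of $h$ and local triviality I would construct a smooth local section. Pick $k_0\in K$ and a neighbourhood $V$ of $k_0$; choose a smooth family $\gamma\colon V\times[0,1]\to K$ with $\gamma(k,0)=1$, $\gamma(k,1)=k$, and all $t$-derivatives vanishing at $t=0$ and $t=1$, then extend $\gamma$ to $V\times\RR$ by the rule $\gamma(k,t+1):=k\cdot{}^\tau\gamma(k,t)$. Now $\sigma(k)(t):=\gamma(k,t)^{-1}\partial_t\gamma(k,t)$ lies in $\OMEGA_\tau(\fk)$ and satisfies $h(\sigma(k))=k$, depending smoothly on $k\in V$. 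The map $V\times\Omega_\tau(K)\Pfeil h^{-1}(V)$, $(k,g)\mapsto g\cdot\sigma(k)$, is then a bijective smooth trivialisation by the freeness and transitivity above; its inverse sends $A$ to $\bigl(h(A),\,z_A^{-1}z_{\sigma(h(A))}\bigr)$ and is smooth because $A\mapsto z_A$ is.

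The main obstacle is one of care rather than substance: everything takes place in the Fréchet category, so one must appeal to the (standard) smooth dependence of $z_A$ on $A$ along Fréchet-smooth families. The twisted periodicity identity is the mechanism by which the gauge transformation $g=z_{A_2}^{-1}z_{A_1}$ lands in $\cL_\tau(K)$ rather than just in $\cL(K)$, and by which the local-section recipe produces a twisted-periodic connection.
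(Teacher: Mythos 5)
Your proposal is correct and takes essentially the same route as the paper's proof: both rely on the twisted periodicity $z_A(t+1)=h(A)\,{}^\tau z_A(t)$ of the normalized horizontal section, use it to show that $g=z_{A_2}^{-1}z_{A_1}$ lands in $\Omega_\tau(K)$ for transitivity, get freeness from uniqueness of the solution with $z(0)=1$, and construct a smooth local section for surjectivity and local triviality. The one place you are noticeably more explicit than the paper is the local-triviality step, where the paper merely asserts that $z_0$ can be chosen to depend smoothly on $k_0$ on a small neighbourhood; your construction via a family $\gamma(k,t)$ with flat $t$-derivatives at the endpoints supplies the detail the paper leaves to the reader.
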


\begin{proof}

  Consider a solution $z$ of \eqref{eq:ODE}. Applying $\tau$ yields
  that both $z(t+1)$ and $\otau z(t)$ are horizontal sections for
  $\otau A$ which implies
  \[\label{eq:zt+1}
    z(t+1)=k\otau z(t)\text{ where }k:=z(0)h(A)\otau z(0)^{-1}.
  \]
  Conversely, for every fixed element $k_0\in K$ there is clearly a
  smooth map $z_0:\RR\to K$ with $z_0(t+1)=k\otau z_0(t)$ and
  $z_0(0)=1$. Then $A_0:=z_0(t)^{-1}z_0'(t)$ is an element of
  $\OMEGA_\tau(\fk)$ with $h(A_0)=k_0$. This shows that $h$ is
  surjective.

  Now let $A_1\in\OMEGA_\tau(\fk)$ be a second element with
  $h(A_1)=k_0$. Let $z_1(t)$ be the corresponding horizontal section
  with $z_1(0)=1$. Then \eqref{eq:zt+1} implies that
  $g(t):=z_1(t)^{-1}z_0(t)$ is an element of
  $\Omega_\tau(\fk)$. Moreover, an easy calculation shows
  $g\cdot A_1=A_2$. Hence $\Omega_\tau(K)$ acts transitively on the
  fiber $h^{-1}(k_0)$. On the other hand, let $g(t)\in\Omega_\tau(K)$
  with $g\cdot A_0=A_0$. Then $\tilde z:=z_0g^{-1}$ is also a
  horizontal section for $A$ with $z_0(0)=1$. Hence, $\tilde z=z_0$
  and therefore $g\equiv1$ which means that the action of
  $\Omega_\tau(K)$ is free.

  Finally, the bundle $h$ is locally free since $z_0$ can be chosen to
  depend smoothly on $k_0$ in a small open subset.
\end{proof}

We return to a Hamiltonian space $X$ with moment map
$\mu:X\to\OMEGA_\tau(\fk)$.  Since $\Omega_\tau(K)$ acts freely on the
target, its action on $X$ is free, as well. Let
$\tilde h:X\to M:=X/\Omega_\tau(K)$ be the quotient and
$m:=\mu/\Omega_\tau(\fk):M\to K$. Then clearly, the following square
is Cartesian:
\[
  \cxymatrix{X\ar[r]^\mu\ar@{>>}[d]^{\tilde
      h}&\OMEGA_\tau(\fk)\ar@{>>}[d]^h
    \\
    M\ar[r]^m&K\tau}
\]
Hence $X$ and $\mu$ can be reconstructed from $M$ and $m$. To also get
the $2$-form $\sigma$, Alekseev-Malkin-Meinrenken introduce in
\cite{AMM} additional structure on $M$.

For this, let $\theta$ and $\thetaq$ be the two canonical $\fk$-valued
$1$-forms on $K$, defined by $\theta(k\xi)=\xi=\thetaq(\xi k)$. These
are combined to get another $\fk$-valued $1$-form
\[
  \Theta_\tau:=\half\big(\,\thetaq+{}^{\tau^{-1}}\theta\,\big)
\]
with $\tau$ acting on the target $\fk$. Thus
$\Theta_\tau(k\xi)=\half(\Ad(k)\xi+{}^{\tau^{-1}}\xi)$.

Moreover, the scalar product on $\fk$ induces the canonical
biinvariant closed $3$-form on $K$
\[
  \chi:=\frac1{12}\<\theta,[\theta,\theta]\>=
  \frac1{12}\<\thetaq,[\thetaq,\thetaq]\>.
\]

\begin{definition}\label{D1}

  A {\it \qH\ $K\tau$-manifold} is a smooth manifold $M$ equipped with
  a $K$-action, a $2$-form $\omega$, and a smooth map $m:M\to K\tau$,
  called the {\it (group valued) moment map}, having the following
  properties:

  \begin{enumerate}

  \item\label{D1i0} $m$ is $K$-equivariant.

  \item\label{D1i1} The form $\omega$ is $K$-invariant and satisfies
    $\mathrm{d}\omega=-m^*\chi$.

  \item\label{D1i3} $\omega(\xi x,\eta)=\<\xi,m^*\Theta_\tau(\eta)\>$
    for all $\xi\in\fk$ and $\eta\in T_xM$.

  \item\label{D1i2}
    $\ker\omega_x=\{\xi x\in T_xM\mid\xi\in\fk\text{ with }
    {}^{m(x)\tau}\xi+\xi=0\}$.

  \end{enumerate}

\end{definition}

To get the connection with $X$, consider the family of $\fk$-valued
$1$-forms $\Xi_t:=h_t^*\thetaq$ on $\OMEGA_\tau(\fk)$ and the $2$-form
\[
  \varpi:=\half\int_0^1\<\Xi_t,\frac{d\Xi_t}{dt}\>\ dt.
\]

One of the main results of \cite{AMM} is:

\begin{theorem}

  Let $(X,\sigma,\mu)$ be a Hamiltonian $\cL_\tau(K)$-space which is
  of finite type.

  \begin{enumerate}

  \item Then there is a unique $2$-form $\omega$ on
    $M=X/\Omega_\tau(K)$ such that
    $\sigma+\mu^*\varpi=\tilde h^*\omega$.

  \item The triple $(M,\omega,m)$ is a \qH\ $K\tau$-manifold.

  \end{enumerate}

  Moreover, the functor $(X,\sigma,\mu)\mapsto(M,\omega,m)$ is an
  equivalence of categories between Hamiltonian $\cL_\tau(K)$-spaces
  and quasi-Hamiltonian $K\tau$-manifolds (both with
  morphisms=iso\-mor\-phisms).

\end{theorem}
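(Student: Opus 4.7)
The plan is to follow \cite{AMM} essentially verbatim, inserting the twist $\tau$ at the appropriate boundary-value step. The central geometric input is the principal $\Omega_\tau(K)$-bundle $h:\OMEGA_\tau(\fk)\to K\tau$ from the previous lemma; together with the equivariance of $\mu$, it forces $\Omega_\tau(K)$ to act freely on $X$. The finite-type hypothesis then guarantees that $M:=X/\Omega_\tau(K)$ is a finite-dimensional manifold with $\tilde h:X\to M$ a principal $\Omega_\tau(K)$-bundle fitting into the Cartesian square already displayed. The residual $K$-action on $M$ and the map $m:M\to K\tau$ are immediate, so it only remains to construct $\omega$.

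Define $\omega$ by the prescription $\tilde h^*\omega=\sigma+\mu^*\varpi$ and check that the right hand side is $\Omega_\tau(K)$-basic. Invariance is inherited from $\sigma$ and $\varpi$; for horizontality, take $\xi\in\cL_\tau(\fk)$ with $\xi(0)=0$ (which forces $\xi(1)=0$ as well). Axiom \textit{(iii)} of the Hamiltonian space gives $\iota_{\xi x}\sigma=\langle\xi,\mu_*(\cdot)\rangle_\cL$, while a direct computation of $\varpi$ against the infinitesimal action $\xi\cdot A=[\xi,A]-\xi'$ on $\OMEGA_\tau(\fk)$ yields the opposite quantity modulo boundary terms at $t=0,1$ that vanish by the choice of $\xi$. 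With $\omega$ in hand, axiom \textit{(i)} of the \qH\ structure follows from \eqref{eq:twistedaction}, the invariance part of \textit{(ii)} from that of $\sigma$ and $\varpi$, and the closedness $d\omega=-m^*\chi$ reduces, after pulling back along the submersion $\tilde h$, to the single identity $d\varpi=-h^*\chi$ on $\OMEGA_\tau(\fk)$. Axiom \textit{(iii)} is obtained from the same boundary term, with $\tau^{-1}$ acting on one of the two Maurer--Cartan forms to produce the symmetrized combination $\Theta_\tau=\half(\thetaq+{}^{\tau^{-1}}\theta)$. For \textit{(iv)} one compares kernels: since $\sigma$ is non-degenerate, $\ker\omega$ consists of those $\xi x$ for which $\xi\in\cL_\tau(\fk)$ solves a linear boundary-value problem on $[0,1]$, and solvability of that problem is equivalent to ${}^{m(x)\tau}\xi(0)+\xi(0)=0$.

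The inverse construction sends $(M,\omega,m)$ to $X:=M\times_{K\tau}\OMEGA_\tau(\fk)$ with $\mu:=\pr_2$ and $\sigma:=\tilde h^*\omega-\mu^*\varpi$; the Hamiltonian axioms are obtained by running the above arguments in reverse. That the two functors are mutually inverse is tautological on the $X$-side (the Cartesian square reconstructs $X$ from $M$ and $m$) and on the $M$-side follows from the fact that $\omega$ is determined by $\tilde h^*\omega$. The main obstacle is the single computational identity $d\varpi=-h^*\chi$ (and the cousin calculation behind axioms \textit{(iii)} and \textit{(iv)}): the remaining steps are formal diagram chases, but this identity requires careful bookkeeping of the $t=1$ boundary, which is where the twist $\tau$ first produces anything not already present in \cite{AMM}.
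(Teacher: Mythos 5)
Your sketch takes essentially the same route as the paper: pass to the quotient by the free $\Omega_\tau(K)$-action, define $\omega$ by $\tilde h^*\omega=\sigma+\mu^*\varpi$, check $\Omega_\tau(K)$-basicness and the four \qH\ axioms by carrying over the AMM computations, and note that the twist enters only through the boundary terms at $t=0,1$. The paper's own (very brief) proof singles out the derivation formula $\iota_{v_\xi}\varpi=-d_A\<A,\xi\>_\cL+\frac12\<h^*\thetaq,\xi(0)\>+\frac12\<h^*\theta,\xi(1)\>$ as the decisive place where $\xi(1)={}^\tau\xi(0)$ produces $\Theta_\tau$ instead of $\Theta$ — that is exactly your "cousin calculation" behind axiom \textit{(iii)} — so the two arguments agree; your write-up simply unpacks more of the AMM steps that the paper defers to the reference.
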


\begin{proof}

  The quasi-inverse functor is the fiber product
  $X=M\times_K\OMEGA_\tau(\fk)$. In the untwisted case, this has been
  proved in \cite{AMM}. The main point is the derivation formula for
  $\iota_{v_\xi}\varpi$ in \cite{AMM}*{Appendix A} which now yields
  \begin{multline}
    \iota_{v_\xi}\varpi=-d_A\<A,\xi\>_\cL+\frac12\<h^*\thetaq,\xi(0)\>+
    \frac12\<h^*\theta,\xi(1)\>=\\=
    -d_A\<A,\xi\>_\cL+\<h^*\Theta_\tau,\xi(0)\>.
  \end{multline}
  Here, $v_\xi$ is the vector field on $\OMEGA_\tau(\fk)$ which is
  induced by the action given by \eqref{eq:action} of
  $\xi\in\cL_\tau(\fk)$.
\end{proof}

Let's call a Hamiltonian $\cL_\tau(K)$-space $X$ {\it complete} if its
moment map $\mu:X\to\OMEGA_\tau(\fk)$ is proper. This leads to the
following observation:

\begin{corollary}

  Let $M$ be the \qH\ $K\tau$-manifold associated to the Hamiltonian
  $\cL_\tau(K)$-space $X$. Then $X$ is complete if and only if $M$ is
  compact.

\end{corollary}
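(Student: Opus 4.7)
The plan is to exploit the Cartesian square of the theorem, which identifies $X$ with the fiber product $M\times_{K\tau}\OMEGA_\tau(\fk)$, together with the compactness of $K\tau$ and the fact, established in the preceding lemma, that $h:\OMEGA_\tau(\fk)\to K\tau$ is a principal $\Omega_\tau(K)$-bundle and in particular admits smooth local sections.

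For the ``if'' direction, suppose $M$ is compact. Given a compact subset $C\subseteq\OMEGA_\tau(\fk)$, the fiber product description realizes $\mu^{-1}(C)$ as the set of pairs $(x,A)\in M\times C$ satisfying $m(x)=h(A)$, which is a closed subspace of the compact product $M\times C$. Hence $\mu^{-1}(C)$ is compact and $\mu$ is proper, so $X$ is complete.

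For the converse, assume $\mu$ is proper and let $(x_n)$ be a sequence in $M$. Since $K\tau$ is compact, we may pass to a subsequence so that $m(x_n)\to k_0\in K\tau$. By the preceding lemma, $h$ admits a smooth local section $s:U\to\OMEGA_\tau(\fk)$ on some open neighborhood $U$ of $k_0$. For all sufficiently large $n$ the element $A_n:=s(m(x_n))$ is defined and $A_n\to s(k_0)$, so the set $C:=\{A_n\}\cup\{s(k_0)\}\subseteq\OMEGA_\tau(\fk)$ is compact. The pairs $(x_n,A_n)$ lie in $\mu^{-1}(C)$, which is compact by hypothesis. Extracting a convergent subsequence and projecting to the first factor yields a convergent subsequence of $(x_n)$ in $M$. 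Since $M$ is a finite-dimensional (hence metrizable) manifold, sequential compactness gives compactness.

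I do not anticipate a real obstacle: both directions are formal consequences of the Cartesian square and the compactness of $K\tau$. The only point requiring a little care is that $X$ and $\OMEGA_\tau(\fk)$ are Fréchet, so properness must be handled via preimages of compact sets rather than via closed-and-bounded arguments; this is automatic here, since convergence in the fiber product $M\times_{K\tau}\OMEGA_\tau(\fk)$ is just componentwise convergence in $M$ and $\OMEGA_\tau(\fk)$.
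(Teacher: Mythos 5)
Your proof is correct, and it is the natural argument the author leaves to the reader: the paper states the corollary without proof, relying precisely on the Cartesian square $X=M\times_{K\tau}\OMEGA_\tau(\fk)$, the compactness of $K\tau$, and the fact that $h$ is a locally trivial $\Omega_\tau(K)$-bundle. Both directions you give are sound (the ``if'' direction using that $X$ is closed in $M\times\OMEGA_\tau(\fk)$, the ``only if'' direction using a local section of $h$), so your proposal matches the intended reasoning.
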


Now we are able to introduce the main objects of the present paper. To
motivate it, consider one of the most important operations for
Hamiltonian manifolds namely {\it symplectic reduction}. Assume that
$A\in\OMEGA_\tau(\fk)$ is in the image of the moment map. Then
\[
  X_A:=\mu^{-1}(A)/\cL_\tau(K)_A=\mu^{-1}(\cL_\tau(K)\cdot
  A)/\cL_\tau(K)
\]
is called the symplectic reduction of $X$ at $A$. Clearly, it depends
only on the coadjoint orbit defined by $A$. Another way to describe it
is to consider the image $a:=h(A)\in K$. Then $X_A=M_a$ where
\[
  M_a:=m^{-1}(a)/K_a=m^{-1}(Ka)/K.
\]
It is known that for general $a$, the space $M_a$ is a symplectic
manifold whose (even) dimension is independent of $a$. So we define
the \emph{complexity} of $X$ or $M$ as
\[
  c(X)=\half\dim X_A=\half\dim M_a=c(M).
\]
The most basic case is that of complexity zero, i.e., where all
symplectic reductions are discrete.

\begin{definition}

  A Hamiltonian $\cL_\tau(K)$-space or a \qH\ $K\tau$-manifold of
  complexity zero is called {\it multiplicity free}.

\end{definition}

Clearly, if $M$ corresponds to $X$ then one is multiplicity free if
and only if the other is. The purpose of this paper is to classify
complete multiplicity free Hamiltonian $\cL_\tau(K)$-spaces and
compact \mf\ $K\tau$-manifolds. Since both problems are equivalent, we
will investigate only \qH\ manifolds from now on.

\section{Affine root systems}\label{sec:affine}

Next, we need to recall some facts about twisted conjugacy
classes. For this and also to state the classification result, we need
to set up notation about affine root systems. Here we are following
mostly Macdonald's \cite{Mac1} and \cite{Mac2}.

Let $\Vfa$ be an Euclidean vector space, i.e., a finite dimensional
$\RR$-vector space equipped with a positive definite scalar product
$\langle\cdot,\cdot\rangle$ and let $\fa$ be an affine space for
$\Vfa$, i.e., a set with a free and transitive $\Vfa$-action
\[
  \fa\times\Vfa\to\fa:(x,t)\mapsto x+t.
\]
The set of affine linear functions on $\fa$ is denoted by $A(\fa)$. It
is an extension of the dual space $\Vfa^*$ by the constant functions
$\RR1$. The gradient of $\alpha\in A(\fa)$ is denoted by
$\Valpha\in\Vfa$. It is characterized by
\[
  \alpha(x+t)=\alpha(x)+\<\Valpha,t\>,\quad x\in\fa,t\in\Vfa
\]

Similarly, let $M(\fa)$ be the group of isometries of $\fa$
(a.k.a. motions). It is an extension of the orthogonal group
$\OG(\Vfa)$ by the group of translations $\Vfa$. More precisely, the
projection
\[
  M(\fa)\to \OG(\Vfa):w\mapsto\Vw
\]
is characterized by the property
\[
  w(x+t)=w(x)+\Vw(t),\quad x\in\fa,t\in\Vfa.
\]
For a subgroup $W$ of $M(\fa)$ let $\VW$ be its image in $\OG(\Vfa)$.

A reflection is a motion $s\in M(\fa)$ whose fixed point set is an
affine hyperplane. If $\alpha\in A(\fa)$ is a non-constant affine
linear function with zero-set $H_\alpha:=\alpha^{-1}(0)$ then
\[
  s_\alpha(x)=x-\alpha(x)\,\Valpha^\vee
\]
is the unique reflection about $H_\alpha$. Here we put, as usual,
\[
  \Valpha^\vee:=2\,\frac{\Valpha}{\norm\Valpha\norm^2}\in\Vfa.
\]
The induced action on $A(\fa)$ is then given by
\[
  s_\alpha(\beta)=\beta-\<\Vbeta,\Valpha^\vee\>\alpha\text{ for all
    $\beta\in A(\fa)$.}
\]

\begin{definition}

  An {\it Euclidean reflection group} is a subgroup
  $W\subseteq M(\fa)$ which is generated by reflections and which acts
  properly on $\fa$.

\end{definition}

Recall, that the action of $W$ is called \emph{proper} if for any
compact subset $\Omega\subseteq\fa$ there are only finitely many
elements $w\in W$ with $\Omega\cap w\Omega\ne\leer$.

A more refined notion is that of an affine root system.

\begin{definition}
  A \emph{(reduced) affine root system on $\fa$} is a subset
  $\Phi\subset A(\fa)$ having the following properties

  \begin{enumerate}
  \item $\RR1\cap\Phi=\leer$.

  \item $s_\alpha(\Phi)=\Phi$ for all $\alpha\in\Phi$.

  \item $\<\Vbeta,\Valpha^\vee\>\in\ZZ$ for all $\alpha,\beta\in\Phi$

  \item The Weyl group
    $W_\Phi:=\langle s_\alpha\mid\alpha\in\Phi\rangle\subseteq M(\fa)$
    is an Euclidean reflection group.

  \item $\RR\alpha\cap\Phi=\{\alpha,-\alpha\}$ for all
    $\alpha\in\Phi$.

  \end{enumerate}

\end{definition}

\begin{remark}

  Our definition differs from Macdonald's in two respects: First, we
  assume the root system to be reduced (last axiom). Secondly, we do
  not assume that $A(\fa)$ is spanned by $\Phi$. In fact, $\Phi$ may
  be finite or even empty. So for us all finite root systems are
  affine, as well.

\end{remark}

The classification of affine root systems is well known: First of all
there is an essentially unique orthogonal decomposition
\[
  \fa=\fa_0\times\fa_1\times\ldots\times\fa_n
\]
such that
\[
  \Phi=\Phi_1\cup\ldots\cup\Phi_n,
\]
with $\Phi_\nu\subset\fa_\nu$ irreducible for every $\nu\ge1$. So each
$\Phi_\nu$ corresponds either to a finite or to a twisted affine
Dynkin diagram.

The {\it chambers of $\Phi$ (or $W_\Phi$)} are the connected
components of the complement of the union of all reflection
hyperplanes in $\fa$. The closure of a chamber is called an {\it
  alcove}. It is known, that $W_\Phi$ acts simply transitively on the
set of alcoves, that each alcove is a fundamental domain for $W_\Phi$,
and that $W_\Phi$ is generated by the reflections about the walls
(i.e., the faces of codimension one) of any fixed alcove $\cA$. These
latter reflections are called \emph{simple} with respect to $\cA$.  If
$\Phi$ is irreducible and infinite then each alcove is a simplex. For
finite root systems, alcoves are usually called Weyl chambers and are
simplicial cones. In general, an alcove is a direct product of an
affine space, a simplicial cone and a number of simplices.

Let $\VPhi$ be the image of $\Phi$ in $\Vfa$. It is a finite, but
possibly non-reduced, root system. Its Weyl group $W_\VPhi$ is the
image $\VW_\Phi$ of $W_\Phi$ in $O(\Vfa)$.

\begin{definition}\label{def:weight}

  \begin{enumerate}

  \item Let $\Phi\subset A(\fa)$ be an affine root system. A
    \emph{weight lattice for $\Phi$} is a lattice
    $\Lambda\subseteq\Vfa$ with $\VPhi\subset\Lambda$ and
    $\VPhi^\vee\subseteq\Lambda^\vee$ where
    $\Lambda^\vee=\{t\in\Vfa\mid\<t,\Lambda\>\subseteq\ZZ\}$ is the
    dual lattice of $\Lambda$.

  \item An \emph{integral root system} is a pair $(\Phi,\Lambda)$
    where $\Phi\subset A(\fa)$ is an affine root system and
    $\Lambda\subseteq\Vfa$ is a weight lattice for $\Phi$.

  \end{enumerate}

\end{definition}

Let $(\Phi,\Lambda)$ be an integral affine root system on $\fa$. Then
$A:=\Vfa/\Lambda^\vee$ is a compact torus. Its character group
$\Xi(A)=\Hom(A,U(1))$ can be identified with $\Lambda$. More
precisely, to $\chi\in\Lambda$ corresponds the character
\[
  \tilde\chi:A\to U(1): a+\Lambda^\vee\mapsto e^{2\pi i\,\<\chi,a\>}.
\]
For every affine root $\alpha\in\Phi$ we are going to write
$\tilde\alpha:=\widetilde{\Valpha}$. Dually, every
$\eta\in\Lambda^\vee$ defines a cocharacter, namely
\[
  \tilde\eta:U(1)\to A:e^{2\pi i\,t}\mapsto t\eta +\Lambda^\vee.
\]
Again, for $\alpha\in\Phi$ we write
$\tilde\alpha^\vee:=\widetilde{\Valpha^\vee}$. Then
\[
  \tilde\chi(\tilde\alpha^\vee(u))=u^{\<\chi,\Valpha^\vee\>}\text{ for
    all }\chi\in\Lambda, \alpha\in\Phi, u\in U(1).
\]
In particular,
\[\label{e1}
  \tilde\alpha(\tilde\alpha^\vee(u))=u^2\text{ for all }\alpha\in\Phi,
  u\in U(1).
\]
The Weyl group $W_\Phi$ acts on $A$ via its quotient $\VW_\Phi$. More
precisely, for $\alpha\in\Phi$ the corresponding reflection acts as
\[\label{e2}
  s_\alpha(a)=a\cdot\tilde\alpha^\vee(\tilde\alpha(a))^{-1},\quad a\in
  A.
\]

\section{Twisted conjugacy classes}\label{sec:twisted}

The geometry of twisted conjugacy classes is very well documented in
the literature for \emph{simple} groups (to be recalled below). From
this, the case of non-simple groups can be easily deduced. For this,
{\it we assume from now on that $K$ is simply connected}. Observe that
this means, in particular, that $K$ is semisimple.

We start with a simple observation.

\begin{lemma}\label{lemma:shift}

  For $u\in K$ let $\tauq:=\Ad(u)\circ\tau\in\Aut K$, i.e.,
  ${}^\tauq k=u\otau ku^{-1}$. Let $\phi:K\to K:k\mapsto
  ku^{-1}$. Then

  \begin{enumerate}

  \item The map $\phi$ intertwines the $\tau$-twisted action with the
    $\tauq$-twisted action of $K$ on $K$.

  \item Let $(M,\omega,m)$ be a \qH\ $K\tau$-manifold and
    $\overline m:=\phi\circ m:x\mapsto m(x)u^{-1}$. Then
    $(M,\omega,\overline m)$ is a \qH\ $K\tauq$-manifold.

  \end{enumerate}

\end{lemma}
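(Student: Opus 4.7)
The plan is to verify the two parts directly. The unifying observation is that $\overline m=R_{u^{-1}}\circ m$, where $R_{u^{-1}}\colon K\to K$ is right translation by $u^{-1}$; the \qH\ axioms for $(M,\omega,\overline m,\tauq)$ will all reduce to the corresponding axioms for $(M,\omega,m,\tau)$ via elementary transformation laws for the canonical forms on $K$ under $R_{u^{-1}}$.

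For (a), I would expand both sides: on the one hand, $\phi(g\cdot_\tau k)=gk\otau g^{-1}u^{-1}$; on the other, $g\cdot_\tauq\phi(k)=(gku^{-1})\tauq(g)^{-1}=gku^{-1}\cdot u\otau g^{-1}u^{-1}=gk\otau g^{-1}u^{-1}$. The equality is immediate.

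For (b), equivariance of $\overline m$ (axiom \ref{D1i0}) follows from (a) combined with the equivariance of $m$. Since $\chi$ is biinvariant we have $R_{u^{-1}}^*\chi=\chi$, so $\overline m^*\chi=m^*\chi$ and axiom \ref{D1i1} transfers verbatim. The kernel condition (axiom \ref{D1i2}) reduces to the identity
\[
  {}^{\overline m(x)\tauq}\xi=\Ad(m(x)u^{-1})\bigl(\Ad(u)\tau(\xi)\bigr)=\Ad(m(x))\tau(\xi)={}^{m(x)\tau}\xi,
\]
so the right-hand side of the kernel formula is literally unchanged. The only real work is axiom \ref{D1i3}, which via $\overline m^*\Theta_\tauq=m^*R_{u^{-1}}^*\Theta_\tauq$ reduces to the key identity $R_{u^{-1}}^*\Theta_\tauq=\Theta_\tau$.

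To establish this identity I would use three ingredients. First, $\thetaq$ is right-invariant, hence $R_{u^{-1}}^*\thetaq=\thetaq$. Second, a brief computation on a tangent vector $k\xi$, noting that $R_{u^{-1}*}(k\xi)=(ku^{-1})\cdot\Ad(u)\xi$, yields $R_{u^{-1}}^*\theta=\Ad(u)\circ\theta$. Third, the definition $\tauq=\Ad(u)\circ\tau$ gives $\tauq^{-1}\circ\Ad(u)=\tau^{-1}$ on $\fk$. Assembling these:
\[
  R_{u^{-1}}^*\Theta_\tauq=\half\bigl(\thetaq+{}^{\tauq^{-1}}(\Ad(u)\circ\theta)\bigr)=\half\bigl(\thetaq+{}^{\tau^{-1}}\theta\bigr)=\Theta_\tau.
\]
I do not anticipate any genuine obstacle; the lemma is a compatibility check, arranged so that the conjugation by $u$ built into $\tauq$ exactly absorbs the right translation by $u^{-1}$ built into $\overline m$.
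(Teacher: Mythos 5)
Your proposal is correct and follows the same route as the paper: the paper's proof consists precisely of the remark that part a) is a direct computation and that part b) reduces to the two identities $\phi^*\chi=\chi$ and $\phi^*\Theta_{\tauq}=\Theta_\tau$, which you verify in detail (together with the kernel condition, which is implicitly left to the reader). Your computation of $R_{u^{-1}}^*\Theta_{\tauq}$ via $R_{u^{-1}}^*\thetaq=\thetaq$, $R_{u^{-1}}^*\theta=\Ad(u)\circ\theta$, and $\tauq^{-1}\circ\Ad(u)=\tau^{-1}$ is exactly the "easily verified" step the paper omits.
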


\begin{proof}

  Part {\it a)} is an easy calculation. Part {\it b)} follows from
  {\it a)} and the easily verified identities $\phi^*\chi=\chi$,
  $\phi^*\Theta_\tauq=\Theta_\tau$.
\end{proof}

\begin{remark}

  The Lemma implies that the category of \qH\ $K\tau$-manifolds
  depends only on the class of $\tau$ in $\|Out|K=\Aut K/\|Inn|K$,
  i.e., on the diagram automorphism which is induced by $\tau$. Thus
  if we wish, we may assume that $\tau$ is induced by a diagram
  automorphism. On the other hand, we don't want to do that too
  excessively since sometimes arbitrary automorphisms allow for
  greater flexibility.

\end{remark}

The following facts are well known.

\begin{theorem}\label{thm:conjugacyclasses}

  Let $K$ be a simply connected compact Lie group and $\tau$ an
  automorphism of $K$. Then there is a $\tau$-stable maximal torus
  $T\subseteq K$ and an integral affine root system
  $(\Phi_\tau,\Lambda_\tau)$ on $\fa=\ft^\tau$, the $\tau$-fixed part
  of $\Lie T$, such that the following holds:

  \begin{enumerate}

  \item Let $\pr^\tau:\ft\to\fa$ be the orthogonal projection. Then
    $\VPhi_\tau=\pr^\tau\Phi(\fk,\ft)$ and
    $\Lambda_\tau=\pr^\tau\Xi(T)$. Moreover, $\Lambda_\tau$ is also
    the weight lattice (=dual of coroot lattice) of $\VPhi_\tau$.

  \item For any alcove $\cA\subseteq\fa$ of $\Phi_\tau$ the
    composition
    \[\label{eq:AK}
      c:\cA\into\fa\overset{\exp}\Pfeil K\to K\tau/K
    \]
    is a homeomorphism.

  \item For $a\in\cA$ let $u:=\exp a\in K$. Then the twisted
    centralizer
    \[
      K(a):=K_{u}=\{k\in K\mid ku\otau k^{-1}=u\}=K^\tauq\text{ (where
        $\tauq:=\Ad(u)\circ\tau$)}
    \]
    is a connected subgroup of $K$ with maximal torus
    $S:=\exp\fa=(T^\tau)^0$. Its root datum is
    $(\VPhi_\tau(a),\Lambda_\tau)$ where
    $\Phi_\tau(a):=\{\alpha\in\Phi\mid\alpha(a)=0\}$.

  \end{enumerate}

\end{theorem}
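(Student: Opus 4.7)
The plan is to reduce the theorem to classical facts about (twisted) affine root systems and centralizers in compact simply connected simple Lie groups. Using \cref{lemma:shift} one may assume $\tau$ preserves a chosen maximal torus $T$ together with a positive system, so that $\tau$ acts as a diagram automorphism. Since $K$ is simply connected and semisimple, $K$ splits as a product of simple simply connected factors which are permuted by $\tau$. Grouping these into $\tau$-orbits and applying the folding identification $(k_1,\ldots,k_s)\mapsto k_1\,\tau(k_2)\,\tau^2(k_3)\cdots\tau^{s-1}(k_s)$, which intertwines $\tau$-twisted conjugation on a length-$s$ orbit $K_1\times\cdots\times K_s$ with $\tau^s$-twisted conjugation on a single $K_1$, one further reduces to the case of a compact simply connected simple $K$ and $\tau$ a diagram automorphism (possibly trivial).

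In this situation, set $\fa:=\ft^\tau$ and define $\Phi_\tau\subset A(\fa)$ to consist, for each $\alpha\in\Phi(\fk,\ft)$, of the affine functions $\pr^\tau\!\alpha+n/r_\alpha$ with $n\in\ZZ$ and $r_\alpha\in\{1,2,3\}$ the length of the $\tau$-orbit of $\alpha$. Macdonald's classification \cite{Mac1} matches this with the reduced (possibly twisted) affine root system attached to the folded Dynkin diagram, and $\VPhi_\tau=\pr^\tau\Phi(\fk,\ft)$ by construction. Simple connectedness of $K$ identifies $\Xi(T)$ with the full weight lattice of $\Phi(\fk,\ft)$; a case-by-case verification through the Dynkin classification of finite and twisted affine diagrams then shows that $\pr^\tau\Xi(T)$ is the weight lattice of $\VPhi_\tau$, completing (a).

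For (b), surjectivity of $c$ rests on the standard existence of twisted maximal tori (\cite{MW}): every element of $K\tau$ is $K$-conjugate to one in $T\tau$, and twist-conjugation by the connected subtorus $(1-\tau)T\subseteq T$ then moves the $T$-component into $S\tau$ with $S=(T^\tau)^0=\exp\fa$. Injectivity reduces to identifying the group of motions of $\fa$ realized by $K$-twist-conjugation of elements of $S$: the rotational part comes from the $\tau$-fixed Weyl group and equals $\VW_{\Phi_\tau}=W_{\VPhi_\tau}$, while the translational part comes from the kernel of $\exp|_\fa$ combined with twist-conjugation by elements of $T$ that fail to be $\tau$-invariant by a lattice amount, and equals $\Lambda_\tau^\vee$. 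These assemble to the full affine Weyl group $W_{\Phi_\tau}$, for which $\cA$ is a fundamental domain, so that $c|_\cA$ is bijective; compactness of $\cA$ then upgrades this to a homeomorphism.

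The main obstacle is (c), specifically the connectedness of $K(a)=K^{\tauq}$ for $\tauq=\Ad(\exp a)\circ\tau$. Since $\tauq$ preserves the invariant inner product on $\fk$, it is semisimple, and the classical theorem on fixed points of semisimple automorphisms of simply connected compact Lie groups (Steinberg, de Siebenthal) delivers connectedness. Given connectedness, the standard principle that in a compact $G$ with $G^\sigma$ connected the subgroup $(T^\sigma)^0$ is a maximal torus of $G^\sigma$ for any $\sigma$-stable maximal torus $T$ identifies $(T^{\tauq})^0=(T^\tau)^0=S$ as a maximal torus of $K(a)$. Finally, decomposing $\fk_\CC$ into $T$-weight spaces and tracking which $\tauq$-orbits of root spaces produce nonzero fixed vectors identifies (via $\alpha\mapsto\pr^\tau\!\alpha$) the $S$-roots of $K(a)$ with $\{\Valpha\mid\alpha\in\Phi_\tau,\alpha(a)=0\}=\VPhi_\tau(a)$, while the character lattice $\Xi(S)$ is inherited from $\Xi(T)$ as $\Lambda_\tau$.
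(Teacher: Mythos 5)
Your overall strategy coincides with the paper's: use \cref{lemma:shift} to replace $\tau$ by a diagram automorphism of a $\tau$-stable torus, fold the product of simple factors into a single simple factor with a higher-order twist, and then quote the standard facts about twisted conjugacy (the paper simply cites \cite{Segal}, \cite{Mohr}, \cite{MW} and sketches the construction of $\Phi_\tau$ along the same lines). However, there is a concrete error in your explicit construction of $\Phi_\tau$. Taking $\Phi_\tau=\{\pr^\tau\alpha + n/r_\alpha : \alpha\in\Phi(\fk,\ft),\ n\in\ZZ\}$ with $r_\alpha$ the $\tau$-orbit length of $\alpha$ produces a \emph{non-reduced} set in the case $\sA_{2n}^{(2)}$, i.e.\ $K=SU(2n+1)$ with $\tau$ outer. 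There one finds roots $\alpha,\beta\in\Phi(\fk,\ft)$ with $r_\alpha=2$, $r_\beta=1$ and $\pr^\tau\beta=2\,\pr^\tau\alpha$ (concretely $\alpha=t_i-t_{n+1}$ and $\beta=t_i-t_{2n+2-i}$ in the standard coordinates), so your recipe contains both $\pr^\tau\alpha+\tfrac12$ and $\pr^\tau\beta+1=2\bigl(\pr^\tau\alpha+\tfrac12\bigr)$ --- two proportional affine functions, violating the axiom $\RR\gamma\cap\Phi=\{\gamma,-\gamma\}$ in the paper's definition of an affine root system. The genuine $\sA_{2n}^{(2)}$ system places the affine roots of gradient $\pr^\tau\beta$ only at \emph{odd} half-integer levels (as one reads off the $\ZZ/2$-grading of the twisted loop algebra, where the weight $\pr^\tau\beta$ occurs only in the odd piece). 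This is exactly what the paper encodes by singling out $\sA_{2n}^{(2)}$ when defining $\alpha_0(x)=-\langle\theta,x\rangle+\frac{2\pi}{r}$, taking $\theta$ in that case to be the \emph{long} dominant root of $\VPhi_\tau$ rather than the short one. A uniform orbit-length formula of the kind you wrote does not exist, so part (a) --- and hence (b), (c), which depend on the identification of $\Phi_\tau$ and $\Lambda_\tau$ --- is not actually established by your argument.

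Two smaller cautions. The assertion that $(T^\sigma)^0$ is a maximal torus of $G^\sigma$ ``for any $\sigma$-stable maximal torus $T$'' is false in general: already for $G=SU(2)$ and $\sigma$ conjugation by a Weyl group representative, the diagonal torus $T$ is $\sigma$-stable but inverted, so $(T^\sigma)^0=\{1\}$ while $G^\sigma\cong U(1)$. What you actually use, and must say explicitly, is your earlier normalization of $T$ as one on which $\tau$ acts as a diagram automorphism, which guarantees $\dim\ft^\tau=\rk K^\tau$. Similarly, in part (b) the identification of the translation lattice arising from twist-conjugation with $\Lambda_\tau^\vee$ is exactly where simple connectedness of $K$ enters, and it requires the precise computation of $\Lambda_\tau$ from part (a); it is not a self-contained observation.
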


For a proof see e.g. \cite{Segal}, \cite{Mohr}, or \cite{MW}. For the
sake of computing examples, let me indicate the construction of
$\Phi_\tau$.

The automorphism $\tau$ permutes the simple factors of $K$. Thus there
exists a $\tau$-stable decomposition $K=K_1\times\ldots\times K_s$
such that $\<\tau\>$ acts transitively on the simple factors of each
$K_i$. Let $\tau_i:=\res_{K_i}(\tau)$. Suppose that
$\Phi_{\tau_i}\subseteq\fa_i=\ft_i^{\tau_i}$ is already
constructed. Then
\[
  \Phi_\tau:=\Phi_{\tau_i}\cup\ldots\cup\Phi_{\tau_s}\subseteq
  \fa:=\fa_1\oplus\ldots\oplus\fa_s.
\]

We are now reduced to the case that $\<\tau\>$ permutes the factors of
$K$ transitively. This means that $K\cong K_0^m$ with $K_0$ simple and
there is $\tau_0\in\Aut(K_0)$ such that $\tau$ acts on $K$ as
\[
  \otau(k_1,k_2,\ldots,k_m)=(k_2,\ldots,k_m,{}^{\tau_0}k_1).
\]
The twisted action on $K_0^m$ is
\[
  (k_1g_1k_2^{-1},\ldots,k_{m-1}g_{m-1}k_m^{-1},k_mg_m{}^{\tau_0}k_1^{-1}).
\]
Therefore, $1\times K_0^{m-1}\subseteq K$ acts freely on $K$ with
quotient map
\[
  K_0^m\to K_0\tau_0:(g_1,\ldots,g_m)\mapsto g_1\ldots g_m
\]
which is equivariant with respect to the first copy of $K_0$. Let
$\Phi_{\tau_0}\subset\fa_0$ be the affine root system for
$\tau_0$. Then $\fa=\fa_0$ is embedded diagonally into $\fa_0^m$ and
$\Phi_\tau$ consists of all affine linear functions of the form
$\alpha(x)=\frac1 m\alpha_0(mx)$ with
$\alpha_0\in\Phi_{\tau_0}$. Observe though that the scalar product on
$\fa$ differs from that on $\fa_0$ by a factor of $m$.

So we may assume that $K$ is simple. Let $\ft\subseteq\fk$ be a Cartan
subalgebra, $\Phi_K\subseteq\ft^*$ the corresponding root system,
$\ft_+\subseteq\ft$ a Weyl chamber, and $\Phi^+_K\subseteq\Phi_K$ the
corresponding set of positive roots. \cref{lemma:shift} allows us to
assume that $\tau$ is induced by a graph automorphism.  Let
$\fa=\ft^\tau=\{\xi\in\ft\mid \otau\xi=\xi\}$ be the space of
$\tau$-fixed points in $\ft$. Then the set
$\VPhi_\tau:=\{\pr^\tau\alpha\mid\alpha\in\Phi(\fk,\ft)\}$ of
restricted roots is a (possibly not reduced) root system on
$\ft^\tau$. Let $\oS_\tau\subseteq\VPhi_\tau$ be the set of simple
roots with respect to the Weyl chamber $\ft^\tau\cap\ft^+$.

Let $r\in\{1,2,3\}$ be the order of $\tau$. Then we define
$\theta\in\VPhi_\tau$ to be the longest dominant root if $r=1$ or
$r=2$ and $K\cong SU(2n+1)$ (case $\sA_{2n}^{(2)}$). Otherwise,
$\theta$ denotes the dominant short root of $\VPhi_\tau$. Define the
affine linear function $\alpha_0(x)=-\<\theta,x\>+\frac{2\pi}r$. Then
$\Phi_\tau$ is the affine root system whose set of simple roots is
\[
  S_\tau:=\oS_\tau\cup\{\alpha_0\}.
\]

A first application of the description of twisted conjugacy classes
goes as follows. Let $m:M\to K\tau$ be a moment map and
$\cA\subseteq\fa$ an alcove. Since $c:\cA\to K\tau/K$ (see
\eqref{eq:AK}) is bijective one can define the \emph{invariant moment
  map} as
\[
  m_+:=c^{-1}\circ m:M\to\cA.
\]
This yields a commutative diagram
\[
  \cxymatrix{M\ar[r]^m\ar[d]_{m_+}&K\ar@{>>}[d]\\\cA\ar[r]^<<<c_<<<{\rm
      bij}&K\tau/K}
\]
Observe that $c$ is in general just continuous but not smooth, so the
same holds true for $m_+$.

\begin{definition}

  Let $m:M\to K\tau$ be \qH. Then $\cP_M=m_+(M)\subseteq\cA$ is called
  the {\it momentum image} of $M$.

\end{definition}

Observe that $\cP_M$ determines the actual image of $m$ since
$m(M)=K\cdot\exp\cP_M$. Fundamental is the following

\begin{theorem}[\cite{AMM}*{Thm.~7.2}, \cite{Meinrenken}*{Thm.~4.4}]

  Let $K$ be simply connected and let $(M,m)$ be a connected compact
  \qH\ $K\tau$-manifold with moment map $m:M\to K\tau$. Then its
  momentum image $\cP_M$ is a convex polytope lying inside
  $\cA$. Moreover, all fibers of $m$ (and therefore $m_+$) are
  connected.
 
\end{theorem}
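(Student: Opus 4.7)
The plan is to translate the statement to the associated Hamiltonian $\cL_\tau(K)$-space $(X,\sigma,\mu)$ and apply an Atiyah-Guillemin-Sternberg-Kirwan convexity principle in the loop-group setting. Since $M$ is compact, $X$ is of finite type and $\mu$ is proper. Composing $\mu$ with the holonomy $h\colon\OMEGA_\tau(\fk)\to K\tau$ and with the bijection $K\tau/K\cong\cA$ from \cref{thm:conjugacyclasses} produces a continuous invariant moment map $\mu_+\colon X\to\cA$ satisfying $\mu_+=m_+\circ\tilde h$. As $\tilde h$ is surjective, it suffices to establish the polytope statement and fiber-connectedness for $\mu_+$, and then upgrade to $m$.

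First I would set up a local normal form at each $x_0\in X$: the slice theorem for Hamiltonian loop group actions (as in \cite{AMM} in the untwisted case, extended in \cite{Meinrenken} to the twisted case, and corresponding on the \qH\ side to the local structure $K\times^L Y$ with $L:=K(a_0)$ recalled in the introduction) identifies a neighborhood of the $\cL_\tau(K)$-orbit of $x_0$ with a standard Hamiltonian model for an action of the connected compact Lie group $L$ on a finite-dimensional symplectic manifold $Y$. Near $a_0:=\mu_+(x_0)$ the image $\mu_+(X)$ then equals $a_0$ plus the image of the $L$-invariant moment map of $Y$, intersected with $\cA$; by the ordinary finite-dimensional Kirwan convexity theorem this is a rational convex polyhedron. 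Moreover, the linear components $f_\xi(x):=\<\xi,\mu_+(x)\>$ for $\xi\in\fa$ are, on each local model, components of the usual $L$-moment map, hence Morse-Bott with even-index critical manifolds, so their sublevel sets are locally connected.

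Given local polyhedrality together with local connectedness of sublevel sets of the $f_\xi$, I would invoke the local-global convexity principle (Hilgert-Neeb-Plank, Condevaux-Dazord-Molino, Lerman-Meinrenken-Tolman-Woodward): a continuous proper map from a connected Hausdorff space to a finite-dimensional affine space whose image is everywhere locally a convex polyhedron and whose linear components have locally connected sublevel sets has convex polyhedral image and connected fibers. Compactness of $M$ then forces $\cP_M=\mu_+(X)$ to be compact, hence a convex polytope inside $\cA$. This gives the statement for $m_+$. To upgrade fiber-connectedness from $m_+$ to $m$, fix $a\in\cA$ and $g:=(\exp a)\tau\in K\tau$: the $K$-equivariant map $m\colon m_+^{-1}(a)\to K\cdot_\tau g\cong K/K(a)$ exhibits $m_+^{-1}(a)$ as the associated bundle $K\times^{K(a)}m^{-1}(g)$, and since $K(a)$ is connected by \cref{thm:conjugacyclasses}, connectedness of $m_+^{-1}(a)$ and of $m^{-1}(g)$ are equivalent.

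The hard part will be establishing the Marle-Guillemin-Sternberg slice theorem in the infinite-dimensional twisted loop group setting. After that, properness of $\mu$ (equivalently compactness of $M$) is what allows one to transport Morse-Bott information and polyhedrality from each finite-dimensional slice to a global statement via the local-global convexity principle; the connectedness of the twisted centralizers $K(a)$, itself a consequence of simple connectedness of $K$, is what finally bridges $m_+$ and $m$.
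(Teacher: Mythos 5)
This theorem is not proved in the paper; it is cited from \cite{AMM}*{Thm.~7.2} (untwisted) and \cite{Meinrenken}*{Thm.~4.4} (twisted). Your blind proposal follows the same route those references take: pass to the loop-group side, use a cross-section/slice theorem to reduce locally to a finite-dimensional Hamiltonian $L$-action, apply Kirwan's convexity theorem and the Morse--Bott property of moment map components on each slice, and then patch with the local-to-global convexity principle of Condevaux--Dazord--Molino and Hilgert--Neeb--Plank. So the strategy is correct and is essentially the one in the literature.

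A few remarks on small gaps. First, you state that it suffices to prove the result for $\mu_+$ on $X$ and then ``upgrade,'' but you never say how to go from connectedness of $\mu_+$-fibers to connectedness of $m_+$-fibers. The map $\tilde h\colon X\to M$ is a principal $\Omega_\tau(K)$-bundle, so $\mu_+^{-1}(a)\to m_+^{-1}(a)$ is a surjection from a connected space and that step is in fact immediate; but for the opposite direction (if one wished to argue on $M$ first) one would need connectedness of $\Omega_\tau(K)$, which again hinges on $K$ being simply connected. Second, in your final step you assert that connectedness of $m_+^{-1}(a)$ and of $m^{-1}(g)$ are ``equivalent'' because $K(a)$ is connected; strictly speaking you also need $\pi_1(K/K(a))=0$, which requires simple connectedness of $K$ in addition to connectedness of $K(a)$ (both coming from \cref{thm:conjugacyclasses}). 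Third, you flag the twisted loop-group slice theorem as the hard part, but within this paper that step is already absorbed into the proved cross-section \cref{thm:cross}, so one could argue entirely on the finite-dimensional $\qH$ side; this is also how \cite{AMM} phrases the reduction. None of these are fatal; they are details to be made explicit.

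Finally, a minor imprecision: the local image of $\mu_+$ near $a_0$ is $a_0$ plus the invariant moment image of the slice $Y$, which already sits in the tangent cone $C_{a_0}\cA$; the additional intersection with $\cA$ you mention is automatic rather than an extra hypothesis.
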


Since we want to glue compact multiplicity free manifolds from local
pieces we have to weaken the compactness property.

\begin{definition}

  A \mf\ $K$-manifold $M$ is called {\it convex} if its momentum image
  $\cP_M$ is convex and locally closed in $\cA$.

\end{definition}

For example, if $M$ is compact and multiplicity free and
$U\subseteq\cA$ is any convex open subset then $M_U:=m_+^{-1}(U)$ is
multiplicity free and convex in the above sense.

\section{The local structure of quasi-Hamiltonian manifolds}
\label{sec:localstructure}

The local structure of the space of twisted conjugacy classes is also
well known from, e.g., \cites{Mohr,MW}. Let $a\in\cA$ (notation as in
\cref{thm:conjugacyclasses}) and $u:=\exp(a)\in K$. Let
\[
  L:=\{l\in K\mid lu\otau l^{-1}=u\}=\{l\in K\mid \otau l=u^{-1}lu\}
\]
be its (twisted) stabilizer in $K$. Observe that $\otau u=u$ implies
$u\in L$ and $\otau L=L$. Then an easy calculation shows that
\[
  \phi:L\to K: l\mapsto lu
\]
is an $L$-equivariant map with $\phi(e)=u$ where $L$ acts on itself
and $K$ by untwisted and $\tau$-twisted conjugation, respectively. Let
$\tauq:=\Ad(u)\circ\tau$ and let $\cO\subseteq K$ be the twisted
conjugacy class of $u$. Then another easy calculation gives for its
tangent space
\[
  T_u\cO=\{(\xi-{}^\tauq\xi)u\mid\xi\in\fk\}={}^{1-\tauq}\fk\ u
\]
On the other hand
\[
  \Lie L=\ker(1-\tauq)\subseteq\fk.
\]
This shows that $\phi(L)$ is a slice for $\cO$ in $u$. Now let
$U\subseteq\cA$ be an open neighborhood of $a$ which is small enough
such that $U_0:=U-a$ is open in the cone
$\cC:=\RR_{\ge0}(\cA-a)$. Observe that $\cC$ is a Weyl chamber of $L$
by \cref{thm:conjugacyclasses}. Then $L_U:=\Ad L\,U_0$ is an open
neighborhood of $1\in L$. This determines the local structure of
$K\tau$ near $\cO$:

\begin{lemma}\label{lemma:slice}

  The map
  \[
    \Phi: K\times^LL_U\to K\tau:[k,l]\mapsto k\phi(l)\otau
    k^{-1}=klu\otau k^{-1}
  \]
  is a $K$-equivariant diffeomorphism onto an open neighborhood of
  $\cO$ in $K$.

\end{lemma}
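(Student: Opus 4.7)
My plan is to show $\Phi$ is a local diffeomorphism along the compact zero section $K\cdot[e,e]\cong K/L\cong\cO$ via the inverse function theorem, and then prove injectivity by a slice-type argument that uses the bijectivity of $c:\cA\to K\tau/K$ from \cref{thm:conjugacyclasses}.

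First I would check well-definedness of $\Phi$ on the quotient: for $l_0\in L$ acting by $(k,l)\mapsto(kl_0^{-1},l_0ll_0^{-1})$, the identity $\Phi([kl_0^{-1},l_0ll_0^{-1}])=\Phi([k,l])$ follows from the defining relation $l_0u\otau l_0^{-1}=u$. The $K$-equivariance (with $K$ acting by left multiplication on the first factor and by $\tau$-twisted conjugation on $K\tau$) is immediate. Next, identifying the tangent space at $[e,e]$ with $(\fk\oplus\Lie L)/\{(-\zeta,0)\mid\zeta\in\Lie L\}$ and $T_uK\cong\fk$ via right translation, a short calculation yields
\[
  d\Phi_{[e,e]}(\xi,\eta)=(1-\tauq)\xi+\eta,\qquad\xi\in\fk,\ \eta\in\Lie L,
\]
where $\tauq=\Ad(u)\circ\tau$. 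Since $\tauq$ is orthogonal with respect to the $K$- and $\tau$-invariant scalar product on $\fk$, the decomposition $\fk=\Lie L\oplus(1-\tauq)\fk$ is orthogonal with $\Lie L=\ker(1-\tauq)$, so the displayed map descends to an isomorphism on the quotient. By $K$-equivariance, $d\Phi$ is an isomorphism at every point of $K\cdot[e,e]$; the inverse function theorem, combined with the compactness of this set and shrinking $U$ if needed, then makes $\Phi$ a local diffeomorphism on all of $K\times^L L_U$, with open image containing $\cO$.

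For injectivity, suppose $\Phi([k_1,l_1])=\Phi([k_2,l_2])$. Using the $K$-action I would reduce to $k_2=e$, and then use the $L$-action on the fibers, together with the fact that $\cC$ is a Weyl chamber of $L$, to further replace each $l_i$ by its unique $L$-conjugate of the form $\exp\xi_i$ with $\xi_i\in U_0$. Since $\xi_i,a\in\fa$ commute, $l_iu=\exp(\xi_i+a)\in\exp U$. The equation in $K\tau$ then says that $\exp(\xi_1+a)$ and $\exp(\xi_2+a)$ are $\tau$-twisted conjugate, so the bijectivity of $c$ forces $\xi_1=\xi_2$. The remaining element $k:=k_2^{-1}k_1$ lies in the twisted centralizer $K(\xi_1+a)$, which by \cref{thm:conjugacyclasses} is determined by $\Phi_\tau(\xi_1+a)=\{\alpha\in\Phi_\tau\mid\alpha(\xi_1+a)=0\}$. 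Shrinking $U$ so that $\alpha(\xi_1+a)=0$ implies $\alpha(a)=0$ (possible since $\VPhi_\tau$ is finite and $\{\alpha(a)\mid\alpha\in\Phi_\tau\}$ is discrete in $\RR$), we obtain $\Phi_\tau(\xi_1+a)\subseteq\Phi_\tau(a)$ and hence $k\in L=K(a)$. A short calculation combining $ku\otau k^{-1}=u$ with $k\exp(\xi_1)u\otau k^{-1}=\exp(\xi_1)u$ then yields $k\exp(\xi_1)k^{-1}=\exp(\xi_1)$, which is exactly what is needed to conclude $[k_1,\exp\xi_1]=[k_2,\exp\xi_2]$ in the quotient.

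The main technical point is the last step, where one must compare the affine root systems at the two nearby points $a$ and $\xi_1+a$ of the alcove in order to control the twisted centralizer and then verify that the resulting $k\in L$ commutes with $\exp\xi_1$; both rest on the twisted version of \cref{thm:conjugacyclasses}. With injectivity and the local diffeomorphism property in hand, $\Phi$ is a $K$-equivariant diffeomorphism onto an open neighborhood of $\cO$ in $K\tau$.
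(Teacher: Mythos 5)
The paper does not actually spell out a proof of this lemma. It records the tangent-space identities $T_u\cO=(1-\tauq)\fk\,u$ and $\Lie L=\ker(1-\tauq)$, concludes that $\phi(L)$ is a slice for $\cO$ at $u$, and then states the lemma, implicitly treating the passage to the global tubular neighbourhood statement as standard (in the spirit of the references \cite{Mohr}, \cite{MW} cited at the start of the section). Your proposal is a correct, self-contained verification of exactly that passage. The differential computation $d\Phi_{[e,e]}(\xi,\eta)=(1-\tauq)\xi+\eta$, together with the orthogonality of $\ker(1-\tauq)$ and $\operatorname{im}(1-\tauq)$ under the invariant form, is the slice observation in infinitesimal form; and the injectivity argument — normalising $l_i$ to $\exp\xi_i$ with $\xi_i$ in the chamber $\cC$, using the bijection $c:\cA\to K\tau/K$ from \cref{thm:conjugacyclasses} to force $\xi_1=\xi_2$, shrinking $U$ so that $\Phi_\tau(\xi_1+a)\subseteq\Phi_\tau(a)$ (local finiteness of the affine hyperplane arrangement), and then the short commutator calculation $k\exp(\xi_1)k^{-1}=\exp(\xi_1)$ — is precisely the gluing step the paper leaves to the reader. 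I see no gap; the one place where the phrasing could be tightened is the claim about discreteness of $\{\alpha(a)\mid\alpha\in\Phi_\tau\}$, which is more cleanly stated as local finiteness of the set of reflection hyperplanes, but this is what you mean and it is correct.
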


Now let $m:M\to K\tau$ be a \qH\ manifold. Then the pull-back with
$\phi$ yields the $L$-manifold $M_U=M\times_KL_U$ such that the
following diagram commutes
\[\label{eq:MU}
  \cxymatrix{M_U\ar@{^(->}[r]\ar[d]^{m_L}&M\ar[d]^m\\L\ar@{^(->}[r]^\phi&K\\}
\]
(where $m_L$ has in fact values in $L_U$). Then in \cite{AMM} it was
shown that $M_U$ carries canonically the structure of a
quasi-Hamiltonian $L$-manifold with moment map $m_L$ and $2$-form
$\omega_L=\omega|_{M_U}$. More generally, the following holds:

\begin{proposition}\label{prop:local-qH}
  Every $a\in\cA$ has an open neighborhood $U\subseteq \cA$ such that
  the functor $M\mapsto M_U$ is an equivalence between the category of
  quasi-Hamiltonian $K\tau$-manifolds $(M,m)$ with $m_+(M)\subseteq U$
  and (untwisted) quasi-Hamiltonian $L$-manifolds $(M',m')$ with
  $m'_+(M')\subseteq U-a$.

\end{proposition}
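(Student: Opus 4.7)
The plan is to construct both directions of the equivalence and verify that they are quasi-inverse. Choose $U$ small enough that the slice map $\Phi:K\times^L L_U\to K\tau$ from \cref{lemma:slice} is a $K$-equivariant diffeomorphism onto an open neighborhood $V$ of the twisted conjugacy class $\cO$ of $u:=\exp(a)$.

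Given a quasi-Hamiltonian $K\tau$-manifold $(M,\omega,m)$ with $m_+(M)\subseteq U$, one has $m(M)\subseteq V$, and the slice theorem yields an $L$-stable submanifold $M_U:=m^{-1}(\phi(L_U))$ such that $M\cong K\times^L M_U$ as $K$-manifolds. Define $m_L:=\phi^{-1}\circ m|_{M_U}$ and $\omega_L:=\omega|_{M_U}$. To show that $(M_U,\omega_L,m_L)$ is an untwisted quasi-Hamiltonian $L$-manifold I would first establish two identities, each a short Maurer--Cartan computation. First, $\phi^*\chi=\chi_L$ on $L$, which follows from biinvariance of $\chi$ and the fact that $\phi$ is right multiplication by $u$. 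Second, and crucially, $\phi^*\Theta_\tau=\Theta_L$ after restriction to $TL\subseteq TK$. The point is that since $L=K^\tauq$ with $\tauq=\Ad(u)\circ\tau$, one has $\tau|_{\Lie L}=\Ad(u^{-1})|_{\Lie L}$, so the twisting built into $\Theta_\tau=\half(\thetaq+{}^{\tau^{-1}}\theta)$ precisely compensates the right translation by $u$ and yields the untwisted $\Theta_L$. Axioms i)--iii) of \cref{D1} for $(M_U,\omega_L,m_L)$ then follow by pullback, and the kernel axiom iv) reduces to checking that, for $\xi\in\fk$, the combined condition $\xi x\in T_xM_U$ and ${}^{m(x)\tau}\xi+\xi=0$ is equivalent to $\xi\in\Lie L$ together with $\Ad(m_L(x))\xi+\xi=0$; this follows from decomposing $\fk$ into the $\pm1$-eigenspaces of $\Ad(m(x))\tau$ relative to $\tauq$.

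For the inverse direction, given $(N,\omega_N,m_N)$ quasi-Hamiltonian for $L$ (untwisted) with $(m_N)_+(N)\subseteq U-a$, set $M':=K\times^L N$ with moment map $m'([k,x]):=\Phi([k,m_N(x)])=k\,m_N(x)\,u\,\otau k^{-1}$. The delicate step is to construct a 2-form $\omega'$ on $M'$ making it a quasi-Hamiltonian $K\tau$-manifold. Concretely, one writes a 2-form on the product $K\times N$ as the sum of the pullback of $\omega_N$ together with correction terms built from pairings of $m_N$ with the Maurer--Cartan forms $\theta,\thetaq$ on $K$; the corrections are chosen exactly so that the total form is basic for the diagonal $L$-action on $K\times N$ (hence descends to $K\times^L N$) and satisfies the four axioms of \cref{D1} for the full $K$-action with twisted moment map $m'$. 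Once again the identities $\phi^*\chi=\chi_L$ and $\phi^*\Theta_\tau|_L=\Theta_L$ dictate the precise shape of these corrections.

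The main obstacle is the construction of $\omega'$ in the inverse functor; this is the quasi-Hamiltonian analogue of the Guillemin--Sternberg--Marle local normal form. A clean route is to transport the problem through the AMM equivalence of \cref{sec:loop} into the category of Hamiltonian $\cL_\tau(K)$-spaces, where the classical symplectic slice theorem applies to the $\cL_\tau(K)$-orbit through any horizontal section of holonomy $u$; pushing the resulting Hamiltonian slice model back through the AMM correspondence delivers $\omega'$ with the required axioms already built in. Mutual inverseness is then immediate: starting from $M$, the isomorphism $K\times^L M_U\cong M$ identifies all structures by construction, and starting from $N$, the $L$-submanifold $\{[1,x]:x\in N\}\subseteq K\times^L N$ is naturally $N$ together with its original quasi-Hamiltonian $L$-structure.
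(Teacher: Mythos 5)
Your forward direction is correct, and the Maurer--Cartan identities $\phi^*\chi|_L=\chi_L$ and $\phi^*\Theta_\tau|_{TL}=\Theta_L$ are the right computations; the observation that $\tau^{-1}(\Ad(u^{-1})\eta)=\eta$ for $\eta\in\fl=\fk^{\tauq}$ is exactly what cancels the twist. (One caveat on the kernel axiom: at a point $x$, $\ker(\omega_L)_x=T_xM_U\cap(T_xM_U)^{\omega_x}$ is not a priori the same as $\ker\omega_x\cap T_xM_U$, so the eigenspace decomposition you invoke does not by itself settle axiom \ref{D1i2} of \cref{D1}; the paper delegates this whole step to \cite{AMM} rather than reproving it.)

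The genuine gap is the inverse functor. You correctly identify the construction of $\omega'$ on $K\times^LN$ as the crux, but neither of your two routes closes it. Saying the correction terms are ``chosen exactly so that'' the axioms hold is the answer minus its content. The fallback route --- pass to the Hamiltonian $\cL_\tau(K)$-space and apply ``the classical symplectic slice theorem'' --- is not available off the shelf: one is then dealing with Fr\'echet manifolds carrying actions of infinite-dimensional groups, where the Marle--Guillemin--Sternberg normal form is not classical at all, and the raison d'\^etre of the AMM formalism is precisely to avoid those analytic difficulties. Proving a loop-group slice theorem rigorous enough to push back through the correspondence would be comparable in effort to the proposition itself.

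The paper closes the gap explicitly and entirely in the finite-dimensional picture. After shifting to $a=0$ via \cref{lemma:shift}, it introduces the twisted double $D_\tau(K)$, cuts out the concrete quasi-Hamiltonian $K\tau\times L$-manifold $Z\cong K\times L_0$ with explicit action and moment map, and sets $\|ind|_L^KN:=(Z\otimes_LN)\mod L$ via fusion and symplectic reduction. Both operations carry explicit $2$-form formulas from \cite{AMM}, so the four axioms of \cref{D1} hold automatically and the $2$-form is written down rather than asserted. What then remains is to show that $N\to\|ind|_L^KN$, $x\mapsto[1,x]$ and $K\times^LM_U\to M$, $[k,x]\mapsto kx$ are isomorphisms of quasi-Hamiltonian manifolds: the first uses the explicit fusion $2$-form to see the extra terms vanish, and the second uses the uniqueness argument that $\omega$ is determined by the moment-map identity on $\fk\,x$ together with its restriction to $T_xM_U$ because $\fk\,x+T_xM_U=T_xM$. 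That uniqueness step is also absent from your proposal, and it is needed to identify the given $\omega$ with the constructed one.
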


\begin{proof}

  \cref{lemma:shift} with $u=\exp a$ allows to replace $\tau$ by
  $\tauq=\Ad(u)\circ\tau$. Thereby, we may assume that $a=0$. We
  proceed to construct a functor which is quasi-inverse to
  $M\mapsto M_U$. For this, recall from \cite{AMM} the \emph{double
    $D(K)$ be of $K$}. It is a \qH\ $K\times K$-manifold which equals
  $K\times K$ as a manifold. The action of $K\times K$ on $D(K)$ is
  given by
  \[
    (k_1,k_2)*(b_1,b_2)=(k_1b_1k_2^{-1},k_2b_2k_1^{-1}).
  \]
  The moment map is
  \[
    m_{D(K)}:D(K)\to K\times K:(b_1,b_2)\mapsto
    (b_1b_2,b_1^{-1}b_2^{-1}).
  \]
  The \emph{twisted double} is the open subset
  $D_\tau(K)=K\times K\tau$ of $D(\ZZ\tau\ltimes K)$. By identifying
  $D_\tau(K)$ with $K^2$ we get the twisted action
  \[
    (k_1,k_2)*(b_1,b_2)=(k_1b_1k_2^{-1},k_2b_2\otau k_1^{-1}).
  \]
  The moment map takes values in $K\tau\times \tau^{-1}K$. After
  identifying also that space with $K^2$ we get the
  $\tau\times\tau^{-1}$-twisted moment map
  \[
    m_{D_\tau(K)}:D_\tau(K)\to K\times K:(b_1,b_2)\mapsto(b_1b_2,\otau
    b_1^{-1}b_2^{-1}).
  \]

  Next put $L_0:=\Ad L(\exp(U-a))$. It follows from
  \cref{thm:conjugacyclasses} that $L_0$ is is a conjugation invariant
  open neighborhood of $1\in L$. Now put
  \[
    Z:=D_\tau(K)_{K\times L_0^{-1}}=\{(b_1,b_2)\in K\times K\mid
    b_2\otau b_1\in L_0\}.
  \]
  This is a \qH\ $K\tau\times L$-manifold which we can identify with
  $K\times L_0$ via the map
  \[
    K\times L_0\overset\sim \to Z:(b,c)\mapsto (b,c\otau b^{-1}).
  \]
  Because of $\otau l=l$ for all $l\in L$, the induced
  $K\times L$-action on $K\times L_0$ is
  \[
    (k,l)*(b,c)=(kbl^{-1},lcl^{-1}).
  \]
  while the moment map becomes
  \[
    m_Z:K\times L_0\to K\times L:(b,c)\mapsto (bc\otau b^{-1},c^{-1}).
  \]

  In \cite{AMM} also the \emph{fusion} of two \qH\ manifolds was
  introduced. More precisely, let $M_1$ be a \qH\ $K\times L$-manifold
  with moment map $(m_1,m_2)$ and let $M_2$ be a \qH\ $L$-manifold
  with moment map $m_3$. Then it was shown that $M_1\times M_2$ is a
  \qH\ $K\times L$-manifold with action
  $(k,l)(x_1,x_2)=(kx_1l^{-1},lx_2)$ and moment map
  $(x_1,x_2)\mapsto(m_1(x_1),m_2(x_1)m_3(x_2))$. We are going to
  denote this new manifold by $M_1\otimes_L M_2$. This construction
  also works if the action of $K$ is twisted.

  Finally, recall from \cite{AMM} also the process of \emph{symplectic
    reduction} of a \qH\ $K\times L$ manifold $M$ with moment map
  $(m_1,m_2)$. Assume that $1\in L$ is a regular value for $m_2$. Then
  it is shown that
  \[
    M\mod L:=m_2^{-1}(1)/L
  \]
  is a \qH\ $K$-manifold with moment map $Lx\mapsto m_1(x)$. Also this
  works for a twisted $K$-action.

  Now the desired quasi-inverse functor is
  \[
    \|ind|_L^KM':=(Z\otimes_L M')\mod L
  \]
  which is \qH\ by construction. Moreover, by definition
  \[
    \|ind|_L^KM'=\{(b,c,x)\in K\times L_0\times M'\mid
    c^{-1}m'(x)=1\}/L\cong (K\times M')/L=K\times^LM'.
  \]
  An easy calculation shows that the moment map induces on
  $K\times^LM'$ the map
  \[
    K\times^KM'\to K:[b,x]\mapsto bm'(x)\otau b^{-1}.
  \]

  Thus, it suffices to show that the natural maps
  \[
    \phi:M'\to K\times^LM':x\mapsto[1,x]
  \]
  and
  \[
    \psi:K\times^LM_U\to M:[k,x]\mapsto kx
  \]
  are isomorphisms of quasi-Hamiltonian manifolds. First, it follows
  immediately from \cref{lemma:slice} that both $\phi$ and $\psi$ are
  $K$-equivariant diffeomorphisms which are compatible with the moment
  maps. It remains to show that the 2-forms match up.

  For $\phi$ observe that $x\in M'$ is mapped to the class of
  $(1,m(x),x)\in K\times L_0\times M'$.  Now recall the explicit
  formula of \cite{AMM}*{Thm.~6.1} for the $2$-form on the fusion
  product of two \qH\ manifolds $(M_1,\omega_1,m_1)$ and
  $(M_2,\omega_2,m_2)$:
  \[\label{eq:fusion}
    \pi_1^*\omega_1+\pi_2^*\omega_2+\frac12\<m_1^*\theta,m_2^*\thetaq\>.
  \]
  which we apply to $M_1=Z$ and $M_2=M'$.  Let
  $\iota:L\to L:h\mapsto h^{-1}$ be the inversion. Because of
  $\iota^*\theta=-\thetaq$ the pull-back of the third term in
  \eqref{eq:fusion} to $M'$ vanishes. To see that also the first
  summand vanishes on $M'$ we look at the explicit form of $\omega_1$
  on $D(K)$ (see \cite{AMM}*{Prop.~3.2}):
  \[
    \omega_1=\half\<p_1^*\theta,p_2^*\thetaq\>+\half\<p_1^*\thetaq,p_2^*\theta\>
  \]
  where $p_1,p_2$ are the two projections of $D(K)$ to $K$. The map
  from $M'$ to $Z\subseteq D(K)$ is $x\mapsto(1,m'(x))$. Hence $p_1$
  is constant on $M'$ implying that the pull-backs of $p_1^*\theta$
  and $p_1^*\thetaq$, hence of $\omega_1$ to $M'$ vanish. This
  finishes the proof that $\phi$ is an isomorphism of
  quasi-Hamiltonian manifolds.

  To show this for $\psi$ let $\omega$ be the given $2$-form on
  $M$. We claim that $\omega$ is uniquely determined by the moment map
  property of $m$ and its restriction to $M_U$. By $K$-invariance,
  $\omega$ is determined by its values in any $x\in M_U$. The moment
  map property \cref{D1}\ref{D1i3} allows to compute
  $\omega(\xi,\eta)$ where $\xi\in\fk x$ and $\eta\in T_xM$. Moreover,
  also $\omega(\xi,\eta)$ is known for $\xi,\eta\in T_xM_U$. Because
  of $\fk_x+T_xM_U=T_xM$ we proved our claim. Now let $\omega'$ be the
  2-form on $K\times^LM_U$. Then the claim shows
  $(\psi^{-1})^*\omega'=\omega$ which is what had to be proved.
\end{proof}

Now we pull everything back to the Lie algebra $\fl$ of $L$ using the
exponential map. For this let $\fl_0:=\Ad L(U-a)\subseteq\fl$. It is
an $L$-invariant open neighborhood of $0\in\fl$ such that
$\exp:\fl_0\to L_0$ is a diffeomorphism. We denote the inverse of this
diffeomorphism by $\log_U$.

Now assume that $(M,\omega,m_0)$ is a Hamiltonian manifold in the
ordinary sense. This means in particular that $m_0$ is an
$L$-equivariant map from $M$ to $\fl^*$ which we continue to identify
with $\fl$. The cone spanned by $U-a$ in $\ft$ is a Weyl chamber
$\ft^+$ for $\fl$. Hence we get a homeomorphism $\ft^+\to \fl\mod
L$. Inverting it, one can also define the invariant moment map
\[
  (m_0)_+:M\to \fl\to \fl\mod L \overset\sim\to\ft^+.
\]

To get a Hamiltonian manifold from a \qH\ one, one defines the
$2$-form $\tilde\omega$ on $\fl$ by
\[
  \tilde\omega_\lambda(\xi_1,\xi_2)=\<g(\Ad\lambda)\xi_1,\xi_2\>
\]
where $\lambda,\xi_1,\xi_2\in\fl$ and
\[
  g(x):=\frac{\sinh x
    -x}{x^2}=\frac{x}{3!}+\frac{x^3}{5!}+\frac{x^5}{7!}+\ldots
\]
An easy calculation shows that this two-form equals the two form
$\varpi$ in Lemma~3.3 of \cite{AMM}. Now for a quasi-Hamiltonian
$L$-manifold $(M,\omega,m)$ with $m(M)\subseteq U$ we put
$m_0:=\log_U\circ m:M\to\fl\cong\fl^*$ and
$\omega_0:=\omega-m_0^*\tilde\omega$.

\begin{lemma}

  The functor $(M,\omega,m)\mapsto\log M:=(M,\omega_0,m_0)$ is an
  equivalence between the category of quasi-Hamiltonian $L$-manifolds
  $(M,\omega,m)$ with $m_+(M)\subseteq U$ and Hamiltonian
  $L$-manifolds $(M_0,m_0)$ with $(m_0)_+(M_0)\subseteq U$.

\end{lemma}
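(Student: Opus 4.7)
The plan is to produce an explicit quasi-inverse functor and verify in each direction that the resulting data satisfies the axioms of the target category. Concretely, the inverse functor sends $(M_0,\omega_0,m_0)$ to $(M_0,\omega_0+m_0^*\tilde\omega,\exp\circ m_0)$. Since $\exp:\fl_0\to L_0$ is an $L$-equivariant diffeomorphism with inverse $\log_U$, both compositions of the two functors act as the identity on underlying manifolds, moment maps and $2$-forms by a tautological cancellation. Hence the only substantive content lies in checking that each functor lands in the intended category.

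I would verify this first for $\log$ applied to a \qH\ $L$-manifold $(M,\omega,m)$ with $m(M)\subseteq L_0$. Equivariance of $m_0$ and $L$-invariance of $\omega_0$ are immediate from the corresponding properties of $\log_U$ and $\tilde\omega$. Closedness $d\omega_0=0$ reduces, via axiom \ref{D1i1}, to the identity $d\tilde\omega=-\exp^*\chi$ on $\fl_0$, which is Lemma~3.3 of \cite{AMM}. The moment map identity is the heart of the matter: from the standard formula for the differential of $\exp$ one obtains
\[
  \exp^*\theta_\lambda=\tfrac{1-e^{-\|ad|\lambda}}{\|ad|\lambda}\,d\lambda,\qquad
  \exp^*\thetaq_\lambda=\tfrac{e^{\|ad|\lambda}-1}{\|ad|\lambda}\,d\lambda,
\]
and hence $\exp^*\Theta_{\id,\lambda}=\tfrac{\sinh\|ad|\lambda}{\|ad|\lambda}\,d\lambda$. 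The elementary identity $\tfrac{\sinh x}{x}-g(x)\cdot x=1$, combined with the $\|ad|$-invariance of $\<\cdot,\cdot\>$, then shows that the correction term $m_0^*\tilde\omega$ subtracts from $\omega$ exactly what is needed to convert axiom \ref{D1i3} into the standard Hamiltonian moment equation $\omega_0(\xi x,\eta)=\<\xi,dm_0(\eta)\>$.

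For non-degeneracy, I would shrink $U$ so that $\|Ad|(\exp\lambda)$ has no non-trivial $-1$-eigenvalue for all $\lambda\in U-a$; then axiom \ref{D1i2} gives $\ker\omega_x=\fl_x\cdot x$ where $\fl_x=\ker\|ad|\lambda$ is the isotropy subalgebra. A direct calculation shows that $m_0^*\tilde\omega$ vanishes on $\fl_x\cdot x$, so $\omega$ and $\omega_0$ have the same kernel on $\fl\cdot x$, and the moment map identity just derived then forces $\omega_0$ to be non-degenerate on all of $T_xM$. The opposite direction of the functor, from Hamiltonian to \qH, is the same computation read backwards; axiom \ref{D1i2} in that direction is automatic because in the chosen neighborhood its prescribed kernel reduces to the isotropy directions, which are already in $\ker\omega_0$.

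The main obstacle is the moment map computation, which comes down to comparing two Lie-theoretic power series in $\|ad|\lambda$ and is essentially a transcription to $L$ of the calculations in Appendix~A of \cite{AMM}; the remaining non-degeneracy verification is a short linear-algebraic check once $U$ has been shrunk appropriately.
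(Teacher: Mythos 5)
The paper's own proof of this lemma is a bare citation of \cite{AMM}*{Prop.~3.4 and Rem.~3.3} together with the formula for the quasi-inverse. What you do is reconstruct the argument behind that citation, so your route is in substance the same one: the pullbacks $\exp^*\theta$, $\exp^*\thetaq$, the resulting $\exp^*\Theta_{\id}=\tfrac{\sinh\|ad|\lambda}{\|ad|\lambda}\,d\lambda$, the elementary cancellation $\tfrac{\sinh x}{x}-g(x)\cdot x=1$, and the reduction of closedness of $\omega_0$ to the identity $d\tilde\omega=-\exp^*\chi$ via \cref{D1}\ref{D1i1} are exactly the computations carried out in \cite{AMM}.

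The non-degeneracy step, however, contains a genuine error. After shrinking $U$ so that $\|Ad|(\exp\lambda)$ has no $-1$-eigenvalue, axiom \cref{D1}\ref{D1i2} gives $\ker\omega_x=0$, \emph{not} $\fl_x\cdot x$ with $\fl_x=\ker\|ad|\lambda$: the kernel prescribed by \ref{D1i2} is the $-1$-eigenspace of $\|Ad|(\exp\lambda)$ acting on $\fl$, whereas $\ker\|ad|\lambda$ is its $+1$-eigenspace, and $\fl_x\cdot x$ is typically nonzero. The intended argument is cleaner and requires no shrinking of $U$ at all: if $\eta\in\ker\omega_0$, then taking $\zeta=\xi x$ in the moment map identity forces $dm_0(\eta)=0$, whence $m_0^*\tilde\omega(\eta,\cdot)=0$ and so $\eta\in\ker\omega$; by \ref{D1i2} we may write $\eta=\xi x$ with $\|Ad|(\exp\lambda)\xi=-\xi$, and $[\xi,\lambda]=dm_0(\eta)=0$ then gives $\|Ad|(\exp\lambda)\xi=\xi$, hence $\xi=0$. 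Finally, the reverse direction is not ``automatic'' as you claim --- for a non-degenerate $\omega_0$ one must verify that $\omega_0+m_0^*\tilde\omega$ has exactly the possibly non-trivial kernel prescribed by \ref{D1i2}, which is the same computation run in reverse, not a vacuous check.
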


\begin{proof}

  This is \cite{AMM}*{Prop.~3.4 and Rem.~3.3}. The quasi-inverse
  functor is
  \begin{equation*}
    (M_0,\omega_0,m_0)\mapsto\exp
    M_0:=(M_0,\omega_0+m_0^*\tilde\omega,\exp\circ
    m_0)\qedhere
  \end{equation*}\end{proof}

Putting both constructions together we get:

\begin{theorem}\label{thm:cross}

  Any $a\in\cA$ has an open neighborhood $U\subseteq\cA$ such that
  there is an equivalence between the category of quasi-Hamiltonian
  $K\tau$-manifolds $(M,m)$ with $m_+(M)\subseteq U$ and Hamiltonian
  $L$-manifolds $(M_0,m_0)$ with $(m_0)_+(M_0)\subseteq U-a$.

\end{theorem}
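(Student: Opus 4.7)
The plan is to obtain the desired equivalence by composing the two functors already constructed in this section. First I would apply \cref{prop:local-qH} to a small enough open neighborhood $U$ of $a$ in $\cA$, yielding an equivalence between the category of quasi-Hamiltonian $K\tau$-manifolds $(M,m)$ with $m_+(M)\subseteq U$ and the category of untwisted quasi-Hamiltonian $L$-manifolds $(M',m')$ with $m'_+(M')\subseteq U-a$. Then I would apply the preceding Lemma (the $\exp$/$\log$ equivalence) on the translated neighborhood $U-a\subseteq\fa\subseteq\fl$ to pass from these untwisted quasi-Hamiltonian $L$-manifolds to Hamiltonian $L$-manifolds $(M_0,m_0)$ with $(m_0)_+(M_0)\subseteq U-a$.

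The only technical point is to ensure that the neighborhood $U$ is small enough for both constructions to apply simultaneously. Concretely, I would first shrink $U$ so that the hypotheses of \cref{prop:local-qH} hold: this requires $U_0:=U-a$ to be open in the cone $\cC=\RR_{\ge0}(\cA-a)$ and $L_U=\Ad L\cdot U_0$ to be a genuine open neighborhood of $1\in L$ on which the slice map of \cref{lemma:slice} is a diffeomorphism. I would then shrink $U$ further so that $\fl_0:=\Ad L\cdot U_0$ lies inside an $\Ad L$-invariant open neighborhood of $0\in\fl$ on which $\exp:\fl_0\to L_U$ is a diffeomorphism; this is possible because $\exp$ is a local $\Ad L$-equivariant diffeomorphism at $0$, so shrinking within the injectivity radius suffices.

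Once $U$ satisfies both conditions, the composition $M\mapsto \log M_U$ (applying the local structure equivalence followed by the logarithm) is the claimed functor, with quasi-inverse $M_0\mapsto\|ind|_L^K(\exp M_0)$ constructed in the proof of \cref{prop:local-qH} after exponentiating back via the second Lemma. Compatibility of the moment-image conditions is automatic: the invariant moment map $m_+$ for the $K\tau$-manifold equals, under the local identification of $\cA$ near $a$ with $\fa^+=\cC$ near $0$, the invariant moment map $(m'_0)_+$ of the associated Hamiltonian $L$-manifold, by construction of both functors.

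The main obstacle is purely bookkeeping rather than substantive: checking that the two neighborhoods (in $\cA$ and in $\fl$) can be matched up consistently and that the invariant moment maps on the two sides correspond under the local homeomorphisms $\cA\supseteq U\leftrightarrow U-a\subseteq \fa^+$ and $\fl/\!\!/L\leftrightarrow\fa^+$. Since the geometric content—the slice theorem around a twisted conjugacy class and the exp/log passage between Hamiltonian and quasi-Hamiltonian data—has already been established, no new calculation is required beyond ensuring that the domains align.
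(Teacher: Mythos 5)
Your proposal is correct and matches the paper exactly: the paper states \cref{thm:cross} as an immediate consequence of composing \cref{prop:local-qH} with the preceding $\exp/\log$ equivalence lemma, under the phrase ``Putting both constructions together we get,'' and gives no further proof. Your additional bookkeeping about shrinking $U$ so that both the slice condition and the injectivity of $\exp$ on $\fl_0$ hold simultaneously is implicit in the paper but is a reasonable point to spell out.
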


\begin{remarks}

  \textit{i)} The theorem hold only if the manifolds are allowed to be
  non-connected.  The reason for this is that even if $M$ is
  connected, the local model $M_0$ might be not.

  \textit{ii)} It can be shown that the only requirement for the open
  subset $U$ of $\cA$ is the validity of the Slice
  \cref{lemma:slice}. As already remarked in \cite{AMM}*{Rem.\ 7.1}
  this implies that for any $L$ there is a canonical open set
  $U$. More precisely, let $\cA^\sigma\subseteq\cA$ be a face of
  $\cA$. Then $\cA_\sigma\subseteq\cA$ is obtained by removing all
  faces which do not contain $\cA^\sigma$:
  \[\label{eq:openstar}
    \cA_\sigma:=\cA\setminus\bigcup_{\cA^\sigma\not\subseteq\cA^\eta}\cA^\eta
  \]
  This is an open subset of $\cA$. Now choose
  $a\in\cA^\sigma\cap\cA_\sigma$. Then the stabilizer $K_\sigma=K_a$
  is independent of the choice of $a$. Then \cref{thm:cross} holds for
  $L=K_\sigma$ and $U=\cA_\sigma$.

\end{remarks}

Later, we need the property that the equivalence of categories above
is compatible with Hamiltonian dynamics. For this, let $f$ be a
$K$-invariant smooth function on $M$. Then it was shown in
\cite{AMM}*{Prop.~4.6} that $M$ carries a unique vector field $H_f$
with
\[\label{eq:q-HamVF}
  \iota(H_f)\omega=df\text{ and }\iota(H_f)m^*\theta=0.
\]
Observe, that he second condition is only necessary when $\omega$ is
degenerate.  Now let $f_0:=f|_{M_U}$ which can be considered as an
$L$-invariant function on $M_0=M_U$. It induces a Hamiltonian vector
field in the classical sense:
\[\label{eq:HamVF}
  \iota(H_{f_0})\omega_0=df_0.
\]

\begin{lemma}\label{qHamFlow}

  Let $f$ be a $K$-invariant smooth function on $M$ and $f_0$ its
  restriction to $M_U$. Then $H_{f_0}$ coincides with $H_f$ on
  $M_U$. In particular, the Hamiltonian flow generated by $f$ on $M$
  preserves $M_U$ and coincides there with the one generated by $f_0$
  on $M_0$.

\end{lemma}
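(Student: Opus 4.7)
The plan is to check that the restriction of the quasi-Hamiltonian vector field $H_f$ to $M_U$ satisfies the defining identity of the classical Hamiltonian vector field of $f_0$ on $(M_0,\omega_0)$, and then invoke non-degeneracy of $\omega_0$ for uniqueness. Recall the passage from \qH\ to Hamiltonian: $M_0=M_U$ as a manifold, $m_0=\log_U\circ m$, and $\omega_0=\omega-m_0^*\tilde\omega$. Since $H_f$ is a priori only defined on $M$ but $M_U\subseteq M$ is open, its restriction to $M_U$ is an honest vector field on $M_0$, and we need only verify
\[
  \iota(H_f)\omega_0=df_0\qquad\text{on }M_U.
\]
Given that $\iota(H_f)\omega=df$ already holds by \eqref{eq:q-HamVF} and that $df|_{M_U}=df_0$, the task reduces to showing $\iota(H_f)\,m_0^*\tilde\omega=0$ on $M_U$.

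The key observation is the second half of \eqref{eq:q-HamVF}, namely $\iota(H_f)\,m^*\theta=0$. Because the Maurer--Cartan form $\theta$ trivializes each tangent space $T_kL$ isomorphically onto $\fl$, this identity is equivalent to the pointwise statement $m_*H_f=0$. Since $m_0=\log_U\circ m$, the chain rule gives $(m_0)_*H_f=(\log_U)_*(m_*H_f)=0$ as well. Consequently, for any tangent vector $X$ at $x\in M_U$,
\[
  (m_0^*\tilde\omega)_x(H_f,X)=\tilde\omega_{m_0(x)}\bigl((m_0)_*H_f,(m_0)_*X\bigr)=0,
\]
which is what we needed. Non-degeneracy of $\omega_0$ (which holds because $\log M$ is a genuine symplectic Hamiltonian $L$-manifold) then forces $H_f|_{M_U}=H_{f_0}$ by the usual uniqueness of the Hamiltonian vector field.

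For the flow statement: the identity $m_*H_f=0$ says that $m$, and hence the invariant moment map $m_+$, is a first integral of $H_f$. The open set $M_U$ is by definition $m_+^{-1}(U)$, so it is a union of level sets of $m_+$ and is therefore preserved by the flow of $H_f$; on $M_U$ this flow then agrees with the classical Hamiltonian flow of $f_0$ by the equality just established.

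The only real subtlety is the step $\iota(H_f)\,m^*\theta=0\;\Longrightarrow\;m_*H_f=0$, and this is immediate once one remembers that $\theta(k\xi)=\xi$ is a fiberwise linear bijection. The rest is bookkeeping with the formula $\omega_0=\omega-m_0^*\tilde\omega$ and the definition of the classical Hamiltonian vector field, so I do not expect a serious obstacle.
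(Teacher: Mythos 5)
Your argument follows the same route as the paper's: extract $m_*H_f=0$ from the second half of~\eqref{eq:q-HamVF}, observe that $m_0$ factors through $m$ so $(m_0)_*H_f=0$, conclude $\iota(H_f)\,m_0^*\tilde\omega=0$, and hence $\iota(H_f)\omega_0=df_0$; non-degeneracy of $\omega_0$ finishes it.

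There is, however, a factual slip worth correcting. The manifold $M_U=M\times_K L_U$ of diagram~\eqref{eq:MU} is the cross-section $m^{-1}(\phi(L_U))$, a submanifold of $M$ of dimension $\dim M-\dim K+\dim L$, which is \emph{not} open in $M$ unless $L=K$; it is also not the same thing as the $K$-invariant open set $m_+^{-1}(U)$ (that set is $K\cdot M_U$, and it is the object called $M(U)$ later in the paper). So the sentence ``$M_U\subseteq M$ is open, [hence] its restriction to $M_U$ is an honest vector field on $M_0$'' is not a valid justification. The correct reason that $H_f$ restricts to a vector field on $M_U$ is exactly that $m_*H_f=0$ and $M_U$ is a union of $m$-fibers, so $H_f$ is tangent to $M_U$; this tangency is also what makes the pullback computation $\iota(H_f)\,i^*\omega=i^*(\iota(H_f)\omega)=df_0$ legitimate, where $i\colon M_U\hookrightarrow M$ is the inclusion. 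You do derive $m_*H_f=0$, so the gap is easily filled by moving that observation to the front; but as written, the openness claim is false and the restriction step is not yet justified when you first invoke it. The same remark applies to the flow statement: $M_U$ is preserved because it is a union of $m$-level sets and $m$ is constant along the flow, not because it equals $m_+^{-1}(U)$.
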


\begin{proof}

  The vector field $H_f$ is parallel to the fibers of $m$. Since
  $m=\exp\circ m_0$, it is on $M_0$ also parallel to the fibers of
  $m_0$. Hence it lies in the kernel of $m_0^*\tilde\omega$. This
  implies
  \[
    \iota(H_f)\omega_0=\iota(H_f)\omega|_{M_U}=df|_{M_U}=df_0
  \]
  which is the characterizing equation of $H_{f_0}$.
\end{proof}

\section{Classification of \mf\ manifolds}\label{sec:class}

In this section we describe our main classification result in
detail. The proof will be given in the subsequent sections.

In general, the moment map can have quite pathological properties. See
e.g. \cite{KnopConvexity}. If the manifold is compact these
pathologies won't occur because of the following

\begin{theorem}

  Assume $K$ to be simply connected and let $M$ be a compact \qH\
  manifold. Then $m_+(M)$ is a convex subset of $\cA$ and
  $m_+/K:M/K\to m_+(M)$ is a homeomorphism.

\end{theorem}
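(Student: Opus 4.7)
Both assertions should follow by combining the Alekseev--Malkin--Meinrenken / Meinrenken convexity and connected-fiber theorem cited above with the multiplicity-free hypothesis (implicit from the section title). Since $M$ is a compact manifold it has only finitely many connected components, each again a compact quasi-Hamiltonian $K\tau$-manifold, so I would first reduce to the case that $M$ is connected.

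In that case the cited theorem gives at once that $\cP_M := m_+(M) \subseteq \cA$ is a convex polytope and that every fiber $m^{-1}(k)$, and therefore every fiber $m_+^{-1}(x) = m^{-1}(\exp x)$ with $x\in\cA$, is connected. This settles the first assertion. For the homeomorphism statement, observe that $m_+ : M \to \cA$ is continuous (since $c$ is a homeomorphism by \cref{thm:conjugacyclasses}) and $K$-invariant, hence factors through a continuous surjection $\overline m_+ : M/K \to \cP_M$. As $M/K$ is compact and $\cP_M$ is Hausdorff, it would suffice to show that $\overline m_+$ is injective; a continuous bijection from a compact space to a Hausdorff space is automatically a homeomorphism.

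For injectivity, fix $x \in \cP_M$ and set $u := \exp x \in K$. The fiber $\overline m_+^{-1}(x)$ is the set of $K$-orbits on $m^{-1}(u)$, namely the symplectic reduction $M_u = m^{-1}(Ku)/K$. The multiplicity-free assumption forces $\dim M_u = 0$, while compactness of $M$ forces $M_u$ to be compact; hence $M_u$ is \emph{finite}. The connected set $m^{-1}(u)$ is therefore partitioned into finitely many $K(u)$-orbits, each of which is closed (the continuous image of the compact group $K(u)$ in the Hausdorff space $M$). Being one of only finitely many closed parts, each orbit is also open in $m^{-1}(u)$, and connectedness of $m^{-1}(u)$ then forces the partition to be trivial. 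This gives injectivity of $\overline m_+$ and completes the proof.

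The convexity of $\cP_M$ is a direct quotation of the cited AMM--Meinrenken theorem, and so is the connectedness of the fibers of $m_+$; the only genuinely new step is the finite-partition argument in the last paragraph, which is also the only place where the multiplicity-free hypothesis enters, and is the one point I would expect a careful reader to scrutinise.
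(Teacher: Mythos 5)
Your argument is correct and is essentially the paper's proof. The paper likewise quotes \cite{AMM}*{Thm.~7.2} for convexity of $\cP_M$ and connectedness of the fibers of $m$ (hence of $m_+/K$), then invokes the (implicit) multiplicity-free hypothesis to conclude those fibers are discrete and therefore singletons, and finally uses compactness to upgrade the continuous bijection $M/K\to\cP_M$ to a homeomorphism. The only place you elaborate is the paper's one-line assertion that "the fibers are discrete since $M$ is multiplicity free," which you unpack into the finite clopen-partition argument; that unpacking is fine, but note that your intermediate step "$\dim M_u=0$ and $M_u$ compact, hence finite" is not a purely topological implication (a compact $0$-dimensional Hausdorff space such as the Cantor set is infinite) — what is really being used is that the reduced space is a stratified space with finitely many $0$-dimensional strata, i.e.\ that it is discrete, which is exactly the paper's phrasing, so you have not actually circumvented that input (and you rightly flagged this step as the one to scrutinize). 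Two minor slips, neither fatal: the fiber of $\overline m_+$ over $x$ consists of the $K(u)$-orbits in $m^{-1}(u)$ (equivalently $K$-orbits in $m^{-1}(Ku)$), not $K$-orbits in $m^{-1}(u)$ — you correct this implicitly in the displayed formula; and the opening reduction to connected $M$ does not recover convexity of $\cP_M$ if $M$ were genuinely disconnected (a union of convex polytopes need not be convex), but this is moot since the theorem, like the quoted AMM/Meinrenken result, is for connected $M$.
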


\begin{proof}

  By \cite{AMM}*{Thm.~7.2}, the image $m_+(M)$ is convex and the
  fibers of $m$ and therefor of $m_+/K$ are connected. On the other
  hand, these fibers are discrete since $M$ is multiplicity
  free. Hence $m_+/K$ is bijective and hence, by compactness, a
  homeomorphism.
\end{proof}

The conclusion of the theorem is inherited by $M_U$ whenever the open
set $U$ itself is convex. Since we want to glue $M$ from local pieces,
we define:

\begin{definition}

  A \qH\ manifold $(M,m)$ is called \emph{convex and multiplicity free
  } if

  \begin{enumerate}

  \item The \emph{momentum image} $\cP_M:=m_+(M)$ is a convex subset
    of $\cA$.

  \item\label{it:convex2} The map $m_+/K:M/K\to\cP_M$ is a
    homeomorphism.

  \end{enumerate}

\end{definition}

Now let $a\in\cP_M$ and $U\subseteq\cA$ such that the conclusion of
the local structure theorem holds. Without loss of generality we may
assume that $U$ is convex. Then $M_0$ is a convex multiplicity free
Hamiltonian $L$-manifold. For its momentum image holds
\[
  \cP_{M_0}=\cP_M\cap U.
\]
Observe that \ref{it:convex2} implies that the fibers of $m_+$ are
$K$-orbits, hence connected. Therefore $M_0$ is connected, as
well. Recall the space $\fa$ from \cref{thm:conjugacyclasses}. Then we
define the \emph{tangent cone} of a subset $\cP\subseteq\cA$ in
$a\in\cP$ as
\[
  C_a\cP:=\RR_{\ge0}(\cP-a)\subseteq\fa.
\]
The discussion above and \cite{KnopConvexity}*{Thm.~2.7} imply the
following structural property of $\cP_M$.

\begin{lemma}

  Let $M$ be a convex, \mf\ $K\tau$-manifold. Then $\cP_M$ is a
  locally polyhedral set, i.e., for every $a\in\cP_M$ the tangent cone
  $C_a:=C_a\cP_M$ is a finitely generated convex cone and there is an
  open neighborhood $U$ of $a$ in $\cA$ with
  \[
    \cP_M\cap U=C_a\cap U.
  \]

\end{lemma}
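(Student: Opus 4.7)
The plan is to use the cross equivalence \cref{thm:cross} to reduce to the ordinary Hamiltonian setting, where local polyhedrality of momentum images of convex multiplicity free manifolds is already known. Fix $a\in\cP_M$ and choose a convex open neighborhood $U\subseteq\cA$ of $a$ small enough that \cref{thm:cross} applies. Let $L:=K(a)$ and let $(M_0,m_0)$ be the corresponding Hamiltonian $L$-manifold with $(m_0)_+(M_0)\subseteq U-a$. The discussion preceding the lemma shows that $(M_0,m_0)$ inherits convexity and multiplicity freeness from $M$ and that $\cP_{M_0}=\cP_M\cap U-a$.

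Next I would invoke \cite{KnopConvexity}*{Thm.~2.7} applied to the Hamiltonian manifold $M_0$ at the point $0\in\cP_{M_0}$. This produces a finitely generated convex cone $C\subseteq\fa$ and a (possibly smaller) open neighborhood $V\subseteq U-a$ of $0$ with $\cP_{M_0}\cap V=C\cap V$. Translating by $a$ gives $\cP_M\cap(a+V)=(a+C)\cap(a+V)$, so near $a$ the momentum image of $M$ agrees with the translate of the finitely generated convex cone $C$.

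It remains to identify $C$ with the tangent cone $C_a\cP_M=\RR_{\ge0}(\cP_M-a)$. Since $V$ is a neighborhood of $0$ and $C$ is a cone, every $y\in C$ is a positive multiple of an element of $C\cap V=(\cP_M-a)\cap V$, whence $C\subseteq\RR_{\ge0}(\cP_M-a)=C_a\cP_M$. Conversely, for $y=t(x-a)$ with $x\in\cP_M$ and $t\ge0$, choose $\epsilon>0$ so small that $\epsilon(x-a)\in V$; convexity of $\cP_M$ gives $a+\epsilon(x-a)\in\cP_M$, so $\epsilon(x-a)\in(\cP_M-a)\cap V=C\cap V$, and $y$ is a nonnegative scalar multiple of this element of $C$, hence lies in $C$. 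Thus $C_a\cP_M=C$ is finitely generated, and the neighborhood $U':=a+V$ satisfies the asserted equality.

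The only non-routine input is the Hamiltonian polyhedrality result \cite{KnopConvexity}*{Thm.~2.7}; everything else is transport across the cross equivalence together with elementary convex geometry. In particular, the main (and only genuine) obstacle is to verify that the reduction to $M_0$ really preserves the convex multiplicity free hypothesis, which however is exactly the content of the paragraph preceding the lemma and of \cref{thm:cross}\ together with the fact that $m_+/K$ being a homeomorphism passes to $m_{0,+}/L$.
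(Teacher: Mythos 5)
Your proposal follows essentially the same route as the paper: localize via \cref{thm:cross} to a convex multiplicity free Hamiltonian $L$-manifold $M_0$ with $\cP_{M_0}=\cP_M\cap U-a$, and then invoke \cite{KnopConvexity}*{Thm.~2.7}. The paper leaves the identification of the cone from that theorem with the tangent cone $C_a\cP_M$ implicit (as well as the reduction step itself, which it simply cites as ``the discussion above''); your explicit verification of that identification by elementary convexity is correct and harmless, and the whole argument matches the paper's intent.
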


From this we get

\begin{corollary}

  Let $\fa_M\subseteq\fa$ be the affine subspace spanned by
  $\cP_M$. Then the interior $\cP_M^0$ of $\cP_M$ in $\fa_M$ is
  non-empty and dense in $\cP_M$.

\end{corollary}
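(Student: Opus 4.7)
The plan is to use the locally polyhedral structure from the preceding lemma to manufacture, near any point $a\in\cP_M$, an interior point of $\cP_M$ (relative to $\fa_M$) arbitrarily close to $a$; both non-emptiness and density of $\cP_M^0$ in $\cP_M$ will then follow at once.

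First I would fix $a\in\cP_M$ and let $V_a\subseteq\fa$ denote the direction subspace of $\fa_M$, so that $\fa_M=a+V_a$. Since $a\in\cP_M$, the translate $\cP_M-a$ contains $0$, hence its linear span in $V_a$ agrees with its affine span; but the latter is $V_a$ by the very definition of $\fa_M$. Consequently the tangent cone $C_a=\RR_{\ge0}(\cP_M-a)$ also linearly spans $V_a$.

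Next I would invoke the preceding lemma to write $C_a$ as a finitely generated convex cone whose generators therefore span $V_a$. A standard convex-geometry argument (sum a basis of $V_a$ drawn from the generators) produces a vector $v\in V_a$ lying in the relative interior of $C_a$, so that a small $V_a$-ball $B$ around $v$ sits inside $C_a$. Using the neighborhood $U$ of $a$ furnished by the lemma, on which $\cP_M$ coincides with $a+C_a$, for $t>0$ sufficiently small the set $a+tB$ is a $\fa_M$-neighborhood of $p:=a+tv$ contained in $a+C_a$ (since $C_a$ is a cone) and in $U$, hence contained in $\cP_M$. Thus $p\in\cP_M^0$, and letting $t\to 0^+$ gives $p\to a$, so $\cP_M^0$ is dense in $\cP_M$ and in particular nonempty.

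The only step requiring genuine care is the verification that the tangent cone $C_a$ spans the whole direction space $V_a$: a convex subset of an affine space need not span its affine hull by nonnegative combinations alone, but the presence of $0\in\cP_M-a$ is exactly what promotes affine span to linear span. Everything else is elementary convex geometry applied to a finitely generated cone.
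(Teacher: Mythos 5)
Your proof is correct, and since the paper gives no explicit proof (just ``From this we get'') the natural reading is exactly the argument you give: use the locally polyhedral structure (the preceding lemma) to exhibit, for every $a\in\cP_M$, relative interior points $a+tv$ arbitrarily close to $a$.

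Two small remarks. First, the step ``$a+tB\subseteq U$ for small $t$'' silently uses that $U$ is a neighborhood of $a$ \emph{in $\cA$}, so one also needs $a+tB\subseteq\cA$; this holds because $B\subseteq C_a\cP_M\subseteq C_a\cA$ and the alcove $\cA$ is itself a convex polyhedron, hence locally equal to $a+C_a\cA$ near $a$. You might want to spell this out. Second, the corollary is actually a standard fact of convex geometry that needs only the convexity of $\cP_M$ (already part of the definition of ``convex multiplicity free''): a non-empty convex subset of an affine space has non-empty relative interior in its affine hull, and that relative interior is dense in the set (since $\operatorname{ri}(C)\subseteq C\subseteq\overline{C}=\overline{\operatorname{ri}(C)}$). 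So the finitely generated cone structure is not strictly needed; what it buys you, as in your write-up, is an explicit construction of the interior point as (a small multiple of) a sum of a basis of generators, rather than an appeal to the abstract fact.
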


The dimension of $\fa_M$ is an important invariant of $M$, called the
\emph{rank} $\rk M$.

Next, we study the generic isotropy group of $M$. Lemma~2.3 of
\cite{KnopAuto} combined with \cref{thm:cross} (reduction to the local
case) implies

\begin{lemma}

  Let $a\in\cP_M^0$ and let $L_0\subseteq L$ be the kernel of the
  $L$-action on $M_0$. Then $\overline A_M:=L/L_0$ is a torus and
  $L_0$ is a principal isotropy group for $K$ on $M$.

\end{lemma}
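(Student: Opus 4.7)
The plan is to reduce to the analogous statement for ordinary Hamiltonian manifolds via \cref{thm:cross} and then invoke Lemma~2.3 of \cite{KnopAuto}.

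First, I would choose a convex open neighborhood $U\subseteq\cA$ of $a$ that satisfies the hypotheses of \cref{thm:cross}, so that $M_U$ corresponds to a Hamiltonian $L$-manifold $M_0$ with $(m_0)_+(M_0)=\cP_M\cap U-a\subseteq\fl^*$. The convexity of $M$ together with the condition that fibers of $m_+$ are $K$-orbits forces $M_0$ to be connected. Since the equivalences of categories in \cref{prop:local-qH} and \cref{thm:cross} identify symplectic reductions on both sides, complexity is preserved, so $M_0$ is a convex, multiplicity free Hamiltonian $L$-manifold. Moreover, because $a\in\cP_M^0$ lies in the relative interior of the momentum image of $M$, the origin lies in the relative interior of $\cP_{M_0}$.

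Next, I would apply Lemma~2.3 of \cite{KnopAuto} directly to $M_0$: this produces a closed normal subgroup $L_0\subseteq L$ such that $L/L_0$ is a torus (this is $\overline A_M$) and $L_0$ is a principal isotropy group for the $L$-action on $M_0$.

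Finally, I would transfer this information back to $(M,m)$. Under the local equivalence of \cref{prop:local-qH}, $M_U\cong K\times^L M_0$ as a $K$-space, so the $K$-isotropy at $[k,x]$ is conjugate in $K$ to the $L$-isotropy of $x\in M_0$. Hence the generic $K$-isotropy on $M_U$ is (a conjugate of) $L_0$. Since $a\in\cP_M^0$, the preimage $m_+^{-1}(\cP_M^0)$ is a nonempty open $K$-invariant subset of $M$, and the same argument shows that the principal isotropy on this open set is $L_0$. As $M$ is a smooth $K$-manifold, the principal orbit type stratum is open and dense, and coincides with $L_0$ on $M$ as a whole.

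The main obstacle is the final transfer step: one must verify that the principal isotropy computed in the local slice around an interior point of $\cP_M$ really is the global principal isotropy, which relies on the fact that interior points of the momentum image parameterize orbits of maximal dimension (minimal isotropy type). This is where convexity and multiplicity-freeness are essential, as they ensure that the principal stratum is visible already over $\cP_M^0$ rather than being confined to the boundary.
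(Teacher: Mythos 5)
Your proposal matches the paper's proof exactly: the paper also reduces to the Hamiltonian case via \cref{thm:cross} and invokes Lemma~2.3 of \cite{KnopAuto}, just without spelling out the transfer details you filled in. Your elaboration of why $M_0$ is convex and multiplicity free and why the principal isotropy over $\cP_M^0$ is the global one is correct and matches the intent.
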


The group $L_0$ can be encoded by a lattice as follows. Since
$\fl=\fa+\fl_0$, the orthogonal complement $\Vfa_M$ of $\fa\cap\fl_0$
in $\fa$ can be identified with $\Lie A_M$. It follows from the
properties of a moment map that $\Vfa_M$ is the group of translations
of the affine space $\fa_M$. Indeed let
$\tilde\fa_M:=\fa_M-\fa_M$. Then
\[
  \xi\in\tilde\fa_M^\perp\Leftrightarrow \<\xi,\cdot\>\text{ is
    constant on $\fa_M$} \Leftrightarrow \xi_*=0\Leftrightarrow
  \xi\in\fa\cap\fl_0\Leftrightarrow\xi\in\Vfa_M^\perp.
\]
This shows that the Lie algebra $\fl_0$ is already determined by the
momentum image $\cP_M$. For the group itself, one needs additionally
the lattice
\[
  \Lambda_M^\vee:=\ker\exp:\Vfa_M\to A_M.
\]
In the following, we prefer to work with its dual lattice
\[
  \Lambda_M:=\{x\in\Vfa_M\mid\<x,\Lambda_M^\vee\>\subseteq\ZZ\}\subseteq\Vfa_M
\]
which can be also interpreted as the character group $\Xi(A_M)$ of
$A_M$.  We call $\Lambda_M$ the \emph{character group} of $M$.

We are going to classify multiplicity free manifolds in terms of the
pair $(\cP_M,\Lambda_M)$. To describe which pairs are possible we need
some notions from algebraic geometry.

Let $G=K_\CC$ be the complexification of $K$. This is a connected
complex reductive group. Let $B\subseteq G$ be a Borel subgroup and
let $\cX(B):=Hom(B,\CC^*)\cong\ZZ^{\operatorname{rk}G}$ be its
character group. It is possible to identify $\cX(B)\otimes\RR$ with a
Cartan subalgebra $\ft$ of $\fk$ (actually the dual of one). The
characters which lie in the Weyl chamber $\ft^+$ are called
dominant. Recall, that that there is a $1:1$-correspondence
$\chi\mapsto L(\chi)$ between dominant characters and irreducible
representations of $G$.

A $G$-variety $X$ is called \emph{spherical} if $B$ has an open dense
orbit in $X$. In the following we are only interested in affine
varieties. In this case, there is a purely representation theoretic
criterion for sphericality due to Vinberg-Kimel{\cprime}fel{\cprime}d,
\cite{VK}: Let $\CC[X]$ be the ring of regular functions on $X$. Then
$\CC[X]$ is in particular a representation of $G$ and decomposes as a
direct sum of irreducible representation. The criterion states that
$X$ is spherical if and only if $\CC[X]$ is a multiplicity free module
for $G$, i.e., no irreducible representation appears in $\CC[X]$ more
than once. Under these conditions there is a subset
$\Lambda_X^+\subseteq\cX(B)\cap\ft^+$ such that
\[
  \CC[X]=\bigoplus_{\chi\in\Lambda_X^+}L(\chi)
\]
as a $G$-representation. The set $\Lambda_X^+$ is actually additively
closed and is called the \emph{weight monoid of $X$}.

Now we return to the compact group $K$. Let $\cP\subseteq\cA$ be a
locally closed convex subset. Let $\fa_\cP\subseteq\ft$ be the affine
space spanned by $\cP$ and let
\[
  \Vfa_\cP:=\fa_\cP-\fa_\cP
\]
be its group of translations. A point $a\in\cP$ gives also rise to an
element $u=\exp(a)\in K\tau$ and hence to an isotropy group $K(a)$
with respect to the twisted action. Observe that the tangent cone of
$\cA$ in $a$ is a Weyl chamber for $K(a)$.

\begin{definition}
  Let $K$ be a simply connected compact Lie group with automorphism
  $\tau$ and fundamental alcove $\cA$. Let $\cP\subseteq\cA$ be a
  locally closed convex subset and $\Lambda\subseteq\Vfa_\cP$ a
  lattice. Then $(\cP,\Lambda)$ is called \emph{spherical in
    $a\in\cP$} if
  \begin{enumerate}

  \item $\cP$ is polyhedral in $a$, i.e.,
    \[
      \cP\cap U=(a+C_a\cP)\cap U
    \]
    for a neighborhood $U$ of $a$ in $\cA$, and

  \item there is a smooth affine spherical $K(a)_\CC$-variety $X$ such
    that
    \[
      C_a\cP\cap\Lambda=\Lambda_X^+.
    \]
  \end{enumerate}
  The pair $(\cP,\Lambda)$ is \emph{spherical} if it is spherical in
  all $a\in\cP$.

\end{definition}

Here is our main result:

\begin{theorem}\label{thm:main}

  Let $K$ be a simply connected compact Lie group with twist
  $\tau$. Then the map $M\mapsto(\cP_M,\Lambda_M)$ furnishes a
  bijection between
  \begin{itemize}

  \item isomorphism classes of convex \mf\ $K\tau$-manifolds and

  \item spherical pairs $(\cP,\Lambda_M)$.

  \end{itemize}

  Under this correspondence, $M$ is compact if and only if $\cP_M$ is
  closed in $\cA$.

\end{theorem}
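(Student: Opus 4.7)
The strategy is to reduce the classification of twisted \qH\ manifolds to the classical Hamiltonian classification carried out in \cite{KnopAuto}, via the local equivalence of categories provided by \cref{thm:cross}, and to globalize by gluing local models along $\cP$. The compactness statement is immediate from the definition of ``convex \mf'': since $K$ is simply connected and compact, the alcove $\cA$ is itself compact, so under the homeomorphism $M/K\cong\cP_M$ the manifold $M$ is compact if and only if $\cP_M$ is compact, if and only if $\cP_M$ is closed in $\cA$.

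For the map $M\mapsto(\cP_M,\Lambda_M)$ to be well-defined, fix $a\in\cP_M$ and choose a convex neighbourhood $U\subseteq\cA_\sigma$ (as in \eqref{eq:openstar}) on which \cref{thm:cross} applies. The associated Hamiltonian $L$-manifold $M_0$ is again convex and multiplicity free, and by the Brion--Sjamaar local structure theorem (as used in \cite{KnopAuto}), $M_0$ is locally isomorphic near the zero fibre to $K\times^L X$ for a smooth affine spherical $L_\CC$-variety $X$. Chasing the definitions of $\cP_M$ and $\Lambda_M$ through the equivalences $M\leftrightarrow M_0\leftrightarrow K\times^LX$ identifies $C_a\cP_M$ with the convex cone spanned by $\Lambda_X^+$ and $\Lambda_M$ with $\Lambda_X$, so $(\cP_M,\Lambda_M)$ is spherical at every $a$.

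For injectivity, given two convex \mf\ manifolds $M,M'$ with the same spherical pair, cover $\cP$ by the opens $\cA_\sigma$ and apply \cref{thm:cross} at each point; Losev's theorem \cite{Losev} (that a smooth affine spherical variety is determined by its weight monoid) together with the Hamiltonian uniqueness of \cite{KnopAuto} identifies the local Hamiltonian models and hence, via \cref{thm:cross}, yields local \qH\ isomorphisms $M_{\cA_\sigma}\cong M'_{\cA_\sigma}$. Surjectivity proceeds in reverse: for each face-representative $a\in\cP$, the sphericality data supplies a smooth affine spherical $K(a)_\CC$-variety $X_a$; the Hamiltonian model $K\times^L X_a$ is converted into a local \qH\ model by \cref{thm:cross}, and these local models are patched along $\cP$.

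The main obstacle in both directions is this gluing: the local isomorphisms (for injectivity) and the local models (for surjectivity) must be chosen coherently on overlaps so as to descend to a global object. This is the same cohomological difficulty that governs the Hamiltonian argument of \cite{KnopAuto}, whose proof ultimately reduces to the vanishing of an appropriate first cohomology group of the automorphism sheaf of local \mf\ models on $\cP$. The analogous computation in the twisted \qH\ setting is considerably more delicate because the local automorphism groups acquire extra contributions coming from the passage between $L$ and $\exp\fl$, and this is precisely why the cohomological heart of the argument is deferred to \cref{sec:cohomology}.
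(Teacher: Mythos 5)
Your overall strategy matches the paper's: reduce to the Hamiltonian case via \cref{thm:cross}, invoke Losev's theorem and the Brion--Sjamaar local structure theory for the pointwise analysis, and handle gluing via a sheaf cohomology computation. The compactness statement and the well-definedness argument are both correct and in the spirit of the paper.

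There are, however, two related points where the proposal misstates (or omits) what is actually needed. First, you assert that the gluing obstruction ``ultimately reduces to the vanishing of an appropriate first cohomology group of the automorphism sheaf.'' This is only half the story. Vanishing of $H^1(\cP,\fL_{\cP,\Lambda})$ controls the \emph{uniqueness} direction: it is what lets one correct the local isomorphisms $M_{\cA_\sigma}\cong M'_{\cA_\sigma}$ on double overlaps so that they glue to a global isomorphism. But \emph{existence} (your ``surjectivity'' step) requires a separate statement: the local models themselves, together with the local isomorphisms between them on overlaps, define a gerbe, and the obstruction to producing a global object from it lives in $H^2(\cP,\fL_{\cP,\Lambda})$. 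The paper proves $H^i(\cP,\fL_{\cP,\Lambda})=0$ for all $i\ge1$ precisely so that both degrees are covered, and uses $H^2=0$ for existence and $H^1=0$ for uniqueness. If you appeal only to $H^1$, you have not justified the surjectivity of $M\mapsto(\cP_M,\Lambda_M)$.

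Second, for this cohomological formalism to apply at all --- in particular for the gerbe to have a well-defined \emph{band} which is a sheaf of abelian groups on which ordinary sheaf cohomology makes sense, and for $\Aut M$ to define a sheaf $\fL_{\cP,\Lambda}$ that is independent of the chosen local model up to \emph{canonical} isomorphism --- one needs to know that the automorphism group of a convex multiplicity free (Hamiltonian or \qH) manifold is \emph{abelian}. This is a nontrivial ingredient (it is a separate lemma in the paper, resting on \cite{KnopAuto}), and without it the transition between two local models only determines an isomorphism of automorphism groups up to conjugation, so the automorphism sheaf is not even well-defined. Your proposal does not mention this. Finally, your attribution of the ``extra contributions'' in the twisted case to ``the passage between $L$ and $\exp\fl$'' is not really where the new difficulty lies; the genuinely new phenomenon is that the global root system governing $\fL_{\cP,\Lambda}$ is now an \emph{affine} root system rather than a finite one, which is what forces the more elaborate analysis of the global Weyl group and the cohomology of the resulting sheaves in the last three sections.
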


In the remainder of this section we reduce the proof of the main
\cref{thm:main} to a statement about automorphisms. We start with:

\begin{lemma}

  Let $M$ be a convex \mf\ $K\tau$-manifold. Then the pair
  $(\cP_M,\Lambda_M)$ is spherical.

\end{lemma}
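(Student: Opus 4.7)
The plan is to verify the two sphericality conditions at each $a\in\cP_M$ by passing to the local Hamiltonian model provided by \cref{thm:cross} and then invoking the Brion--Sjamaar local structure theorem for multiplicity free Hamiltonian manifolds that was recalled in the introduction.

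Fix $a\in\cP_M$ and set $L:=K(a)$. By \cref{thm:cross}, choose a convex open neighborhood $U\subseteq\cA$ of $a$ such that $M_U$ corresponds to a Hamiltonian $L$-manifold $(M_0,\omega_0,m_0)$ with $(m_0)_+(M_0)\subseteq U-a$. The equivalence of categories is built out of the diffeomorphism $K\times^L L_U\to K\tau$ of \cref{lemma:slice} and the exponential on $\fl_0$, so it preserves $K$-orbits, the invariant moment map up to the translation $a$, and principal isotropy groups. Consequently, $M_0$ is itself convex and multiplicity free, its momentum image satisfies $\cP_{M_0}=\cP_M\cap U-a$, and its character lattice equals $\Lambda_M$; in particular $C_0\cP_{M_0}=C_a\cP_M$. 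The local polyhedrality condition at $a$ is then the content of the lemma immediately preceding the present statement.

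Next, apply the Brion--Sjamaar local model theorem to $M_0$ at the point $0\in\cP_{M_0}$: there exists a smooth affine spherical $L_\CC$-variety $X$ together with an $L$-equivariant symplectomorphism between an open $L$-invariant neighborhood of $m_0^{-1}(0)$ in $M_0$ and an open neighborhood of the zero section in the model space $L\times^LX=X$, viewed as a Hamiltonian $L$-manifold. Under this identification, the momentum image of $X$ near $0$ is the cone $C_X$ (intersected with a neighborhood of $0$), while the principal isotropy lattice of the model coincides with $\Lambda_X$. Combining with the previous step gives
\[
  C_a\cP_M=C_X,\qquad \Lambda_M=\Lambda_X,
\]
hence $C_a\cP_M\cap\Lambda_M=C_X\cap\Lambda_X=\Lambda_X^+$, where the last equality uses that for a smooth affine spherical variety the weight monoid is saturated in the lattice it generates.

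The only delicate ingredient is the Brion--Sjamaar local model itself, which was originally established for ordinary Hamiltonian manifolds. Here I have sidestepped the need for a direct \qH\ version by first applying \cref{thm:cross} to land in the classical Hamiltonian setting; the one caveat (noted after \cref{thm:cross}) is that the local model $M_0$ need not be connected even when $M$ is, but this is harmless since sphericality is a pointwise condition on $\cP_M$ and the tangent cone and lattice at $a$ are read off from the component of $M_0$ meeting $m_0^{-1}(0)$.
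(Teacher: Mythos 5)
Your proof takes the same route as the paper's: reduce via the cross-section theorem \cref{thm:cross} to the (untwisted) Hamiltonian situation, and there invoke the Brion–Sjamaar local model theorem. The paper's own proof simply cites \cite{KnopAuto}*{Thm.~11.2} as the Hamiltonian input; you have in effect unpacked that citation.

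Two remarks on the details. First, the formal definition of sphericality in this paper requires $C_a\cP\cap\Lambda=\Lambda_X^+$, whereas you derive it from $C_a\cP=C_X$ and $\Lambda=\Lambda_X$ (the form \eqref{eq:sph} from the introduction) together with saturation of $\Lambda_X^+$ in $\Lambda_X$. The saturation fact is correct — for a smooth (hence normal) affine spherical $G$-variety, $X\mod U$ is a normal affine toric variety, and normality of a toric variety is equivalent to saturation of its weight monoid — but you should cite or sketch it, since it is the one non-bookkeeping ingredient in the chain of equalities; in \cite{KnopAuto} it is absorbed into the statement of Thm.~11.2. Second, the local-polyhedrality lemma you refer to is not the one immediately preceding the statement but the lemma two items earlier; this is purely a pointer slip. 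Modulo these small points the proof is sound and equivalent to the paper's.
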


\begin{proof}

  Using the cross section theorem \ref{thm:cross}, the problem is
  reduced to the Hamiltonian case. Then it is part of
  \cite{KnopAuto}*{Thm.~11.2}.
\end{proof}

Next, we state local existence and uniqueness:

\begin{lemma}\label{lemma:local}

  Let $(\cP,\Lambda)$ be a spherical pair.

  \begin{enumerate}

  \item\label{it:local1} For every point $a\in\cP$ there is a convex
    \mf\ $K\tau$-manifold $M$ with $\Lambda_M=\Lambda$ and such that
    $\cP_M$ is an open neighborhood of $a$ in $\cP$.

  \item\label{it:local2} Let $(M,m)$, $(M',m')$ be two convex \mf\
    $K\tau$-manifolds with
    $(\cP_M,\Lambda_M)=(\cP_{M'},\Lambda_{M'})=(\cP,\Lambda)$. Then
    there is an open cover $\cP=\bigcup_\nu\cP_\nu$ and for all $\nu$
    isomorphisms of \qH\ $K\tau$-manifolds
    \[
      m_+^{-1}(\cP_\nu)\cong (m')_+^{-1}(\cP_\nu).
    \]

  \end{enumerate}

\end{lemma}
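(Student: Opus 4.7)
The plan is to reduce both assertions to their classical Hamiltonian analogues via the cross-section equivalence of \cref{thm:cross}. For every $a\in\cP$, I would first choose a convex open neighborhood $U_a\subseteq\cA$ of $a$ with three properties: (i) $U_a$ is contained in the open set $U$ for which \cref{thm:cross} applies (so the functor $M\mapsto \log M_{U_a}$ realizes an equivalence between qH $K\tau$-manifolds with $m_+(M)\subseteq U_a$ and Hamiltonian $L$-manifolds for $L=K(a)$ with image in $U_a-a$); (ii) $\cP\cap U_a=(a+C_a\cP)\cap U_a$, using the polyhedrality clause in the definition of sphericality at $a$; and (iii) $U_a-a$ lies inside the Weyl chamber $\RR_{\ge0}(\cA-a)$ of $L$. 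Under this equivalence, the momentum image $\cP_M\cap U_a$ on the qH side becomes $(\cP_M\cap U_a)-a$ on the Hamiltonian side, tangent cones at $a$ are preserved, and the character lattice $\Lambda_M$ (which was defined intrinsically in terms of the principal isotropy group acting on $\fa_M$) is carried to the character lattice of the associated Hamiltonian $L$-manifold. Thus sphericality of $(\cP,\Lambda)$ at $a$ in the qH sense is exactly sphericality at $0$ of the pair $(C_a\cP\cap(U_a-a),\Lambda)$ in the Hamiltonian sense.

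For part \ref{it:local1}, I would then invoke the Hamiltonian local existence theorem of \cite{KnopAuto}: sphericality at $0$ of $(C_a\cP\cap(U_a-a),\Lambda)$ — which holds because the witness $X$ supplied by the definition produces a local model $K(a)\times^{K(a)_\CC\text{-slice}}X$ — yields a convex \mf\ Hamiltonian $L$-manifold $M_0$ with momentum image an open neighborhood of $0$ in $C_a\cP$ and character lattice $\Lambda$. Transporting $M_0$ through the quasi-inverse of \cref{thm:cross} produces the desired convex \mf\ $K\tau$-manifold $M$ with $\cP_M$ an open neighborhood of $a$ in $\cP$ and $\Lambda_M=\Lambda$.

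For part \ref{it:local2}, I would set $\cP_a:=\cP\cap U_a$, giving an open cover $\{\cP_a\}_{a\in\cP}$ of $\cP$. Under the equivalence of \cref{thm:cross} applied to $U_a$, the two restrictions $m_+^{-1}(\cP_a)$ and $(m')_+^{-1}(\cP_a)$ correspond to convex \mf\ Hamiltonian $L$-manifolds with identical momentum image $\cP_a-a$ and identical character lattice $\Lambda$. The Hamiltonian local uniqueness result of \cite{KnopAuto} (which rests on Losev's uniqueness theorem for smooth affine spherical varieties) then gives an isomorphism of Hamiltonian $L$-manifolds, and pulling this isomorphism back through the equivalence of \cref{thm:cross} yields the required isomorphism of qH $K\tau$-manifolds over $\cP_a$.

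The main obstacle, such as it is, is the bookkeeping in the first paragraph: one must verify that the intrinsic qH invariants $\cP_M$, $\Lambda_M$, and the sphericality condition are compatible with the cross-section functor $M\mapsto \log M_U$. This is essentially forced by the definitions — $\Lambda_M$ was defined via the principal isotropy, the Hamiltonian $\log M_U$ has the same principal isotropy as $M$, and the invariant moment maps agree under the identification $U\leftrightarrow U-a$ — but it is the only nontrivial piece, since once it is in place both statements are direct translations of results already established in \cite{KnopAuto} for the Hamiltonian setting.
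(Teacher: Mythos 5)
Your proposal is correct and follows the same route as the paper's proof: reduce to the Hamiltonian case via the cross-section equivalence of \cref{thm:cross}, then invoke the definition of sphericality (via the local model $K(a)\times^{L}X$) for existence, and Losev's theorem as packaged in \cite{KnopAuto} for local uniqueness. The paper states this tersely (``by localizing, we may assume that $M$ is Hamiltonian''); your version supplies the bookkeeping that this phrase compresses, but introduces no new ideas.
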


\begin{proof}

  Again, by localizing, we may assume that $M$ is Hamiltonian. Then
  \ref{it:local1} holds by the definition of a spherical pair and
  \ref{it:local2} is a basically a result of Losev \cite{Losev} (see
  also \cite{KnopAuto}*{Thm.~2.4}).
\end{proof}

Next, we consider automorphisms.

\begin{lemma}

  Let $M$ be a convex, multiplicity free Hamiltonian or \qH\
  manifold. Then its automorphism group is abelian.

\end{lemma}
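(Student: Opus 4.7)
The plan is to reduce the quasi-Hamiltonian statement to the Hamiltonian one, where the analogous result is established in \cite{KnopAuto}, and to glue the local conclusions via the cross section \cref{thm:cross}.

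Concretely, I will cover the momentum image $\cP_M \subseteq \cA$ by convex open subsets $U_\nu \subseteq \cA$ small enough that \cref{thm:cross} applies to each. Each open piece $M_{U_\nu} := m_+^{-1}(U_\nu)$ is then equivalent to a convex multiplicity free Hamiltonian $L_\nu$-manifold $M_0^{(\nu)}$, whose automorphism group is abelian by the Hamiltonian version of the lemma. Any $\phi \in \Aut(M)$ commutes with the $K$-action and preserves $m$, hence preserves $m_+$, and therefore restricts to an automorphism of each $M_{U_\nu}$; under \cref{thm:cross} this restriction corresponds to an automorphism of $M_0^{(\nu)}$.

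The restriction map $\Aut(M) \to \prod_\nu \Aut(M_{U_\nu})$ is injective, because the $M_{U_\nu}$ cover $M$ and an automorphism is determined by its restrictions to an open cover. Its target is a product of abelian groups, hence abelian, so $\Aut(M)$ is itself abelian.

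The main substance of the statement therefore lies in the Hamiltonian case, which is \cite{KnopAuto}; there the argument proceeds by showing that on a Brion--Sjamaar slice $K \times^L X$ with $X$ a smooth affine spherical $L_\CC$-variety, the group of equivariant automorphisms is the torus $\Aut_{L_\CC}(X)$, and then invoking Losev's uniqueness theorem to see that a global automorphism is determined by its action on a single principal $K$-orbit, so that $\Aut(M)$ embeds into such a torus. In our setting, the only genuinely new point to check is that gluing is compatible with \cref{thm:cross}, which is immediate since the cross section is an equivalence of categories.
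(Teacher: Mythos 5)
Your proof is correct and follows essentially the same path as the paper's: reduce by localization (via \cref{thm:cross}) to the Hamiltonian case, then cite \cite{KnopAuto}*{Thm.~9.2}. The paper's proof is terser (just ``by localization, one can assume that $M$ is Hamiltonian''), whereas you spell out the routine point that restricting to an open cover gives an injective group homomorphism $\Aut(M)\hookrightarrow\prod_\nu\Aut(M_{U_\nu})$ into a product of abelian groups, which is the right way to make the localization argument precise.
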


\begin{proof}

  By localization, one can assume that $M$ is Hamiltonian. Then use
  \cite{KnopAuto}*{Thm.~9.2}.
\end{proof}

This can be used as follows: Let $(\cP,\Lambda)$ be a spherical pair
and let $\cP_0\subseteq\cP$ be open such that there is a convex
multiplicity free manifold $M$ with
$(\cP_M,\Lambda_M)=(\cP_0,\Lambda)$. Then
\[
  \fL_{\cP,\Lambda}(\cP_0):=\Aut M
\]
depends only on $\cP_0$ and not on the choice of $M$. This follows
from the fact that any isomorphism $M\cong M'$ induces an isomorphism
of the automorphism groups which is unique up to conjugation, so
canonical because of abelianness.

Since $\fL_{\cP,\Lambda}$ is a sheaf of abelian groups it has
cohomology groups. Now, the technical heart of this paper is:

\begin{theorem}

  Let $(\cP,\Lambda)$ be a spherical pair. Then
  $H^i(\cP,\fL_{\cP,\Lambda})=0\text{ for all $i\ge1$}$.

\end{theorem}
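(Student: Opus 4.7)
The plan is to filter $\fL_{\cP,\Lambda}$ into a connected ``torus'' part $\fT$ and a discrete ``component'' part $\cF$, and prove acyclicity for each. By \cref{thm:cross}, the local classification of convex \mf\ $K\tau$-manifolds matches that of Hamiltonian $L$-manifolds, for which \cite{KnopAuto} shows that the automorphism group fits in a canonical short exact sequence
\[
  1 \to \overline A_M \to \Aut M \to \cF_M \to 1,
\]
where $\overline A_M$ is the torus with character lattice $\Lambda_M$ and $\cF_M$ is a finite abelian group determined by the combinatorics of $(\cP_M,\Lambda_M)$. Sheafifying on $\cP$ yields
\[
  0 \to \fT \to \fL_{\cP,\Lambda} \to \cF \to 0,
\]
so by the long exact sequence in cohomology it suffices to show $H^i(\cP,\fT)=0$ and $H^i(\cP,\cF)=0$ for $i\ge 1$.

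For the torus sheaf $\fT$ I would use the exponential sequence
\[
  0 \to \Lambda_M^\vee \to \fs \to \fT \to 0,
\]
where $\fs$ is the sheaf of smooth $\Vfa_M$-valued functions (with $\fa_M$ constant along the face stratification). As a sheaf of $C^\infty(\cP)$-modules, $\fs$ is fine and hence acyclic, reducing the torus case to acyclicity of the locally constant cocharacter sheaf $\Lambda_M^\vee$.

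For the locally constant sheaves $\Lambda_M^\vee$ and $\cF$, I would use the cover of $\cP$ by the intersections $\cP\cap\cA_\sigma$ with the open stars of \eqref{eq:openstar}. By the remarks following \cref{thm:cross}, the stabilizer $K_\sigma$, the affine span $\fa_M$, and the component group are all unchanged on each such open, so both sheaves are genuinely constant there. Finite intersections of open stars remain convex and contractible, so the cover is Leray for any locally constant sheaf, and the associated Čech spectral sequence identifies $H^*(\cP,-)$ with singular cohomology of $\cP$ with constant coefficients. Convexity of $\cP$ gives contractibility, so the vanishing follows.

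The main obstacle will be constructing the sheaf sequence in the first step: one must verify that the extension $1 \to \overline A_M \to \Aut M \to \cF_M \to 1$ depends \emph{canonically} on the pair $(\cP_M,\Lambda_M)$, so that the patching isomorphisms supplied by \cref{lemma:local}\ref{it:local2} — only unique up to composition with inner automorphisms a priori — assemble into well-defined sheaf morphisms. Abelianness of $\Aut M$ handles the intrinsic ambiguity within $\overline A_M$, but extra care is needed in the twisted affine setting: as the base point moves between faces of $\cP$, the local model group $L = K(a)$ changes, and one must show that the resulting identifications of $\cF_M$ are compatible with the open-star stratification. Once this is in hand, the remaining acyclicity arguments above are standard sheaf-theoretic consequences of the contractibility of the strata.
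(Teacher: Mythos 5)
Your high-level plan — filter $\fL_{\cP,\Lambda}$ by a ``connected'' subsheaf coming from smooth functions and a discrete quotient, handle the connected part via a fine sheaf, then deal with the discrete pieces — is indeed the paper's plan in outline.  But the implementation in your proposal replaces the discrete pieces by locally constant sheaves, and that is false; this is exactly where all the real work in the paper sits.

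Concretely: after passing to the sheaf of $W$-invariant smooth functions and the exponential map $\epsilon\colon f\mapsto \exp(2\pi\nabla f)$, the kernel of $\bar\epsilon$ is \emph{not} the constant sheaf $\Lambda_M^\vee$ but the constructible sheaf $\fK_\cP$ with stalk $\Lambda^\vee\cap(\RR\VPhi_x)^\vee$ at $x$ (\cref{L4}).  The stalk shrinks as $\Phi_x$ grows, so $\fK_\cP$ is genuinely non-constant, and the vanishing of its higher cohomology is proved in the paper only via the four-term resolution $0\to\fK_\cP\to\Lambda^\vee_\cP\to\bigoplus_i\ZZ_{H_i\cap\cP}\to\fC_\cP\to0$ together with the separately proven surjectivity of $H^0(\psi)$.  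Similarly, your $\cF$ is the sheaf $\fC_\cP$ with stalks $\pi_0(A^{\Phi_x})$, and its restriction maps are the quotients induced by the \emph{strict} inclusions $A^{\Phi_x}\subseteq A^{\Phi_y}$ for $y$ near $x$; these are surjective but generically not injective, so $\fC_\cP$ is not constant even on the open star $\cA_\sigma$.  Your assertion that ``both sheaves are genuinely constant there'' (because ``the stabilizer $K_\sigma$, the affine span $\fa_M$, and the component group are all unchanged'' on an open star) is the gap: $K_a$ is only constant on the relative interior of the \emph{closed} face $\cA^\sigma$, and it drops as $a$ moves into the interior of $\cA_\sigma$, so the $\Phi_x$, and hence the stalks, change within $\cA_\sigma$.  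In fact the whole point of the paper's \cref{lem:infiniteC} and the filtration by faces is that $\fC_\cP$ has to be analyzed as a constructible sheaf whose successive graded pieces are constant on \emph{closed} faces of $\cP$, and $H^{\ge1}$-vanishing follows from contractibility of those faces, not from a Leray cover by open stars.  Finally, you omit the reduction to $\Lambda$ of adjoint type (\cref{lem:commensurable}), which is what makes the explicit descriptions of $\fC_\cP$ and $\fK_\cP$ via labels $(a_1,\dots,a_n)$ and $\gcd$'s tractable in the first place.  So the skeleton you propose is right, but the claimed trivialization of the discrete sheaves on the open-star cover would fail, and with it the argument.
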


The proof will be given in the next sections. See
\cref{thm:globalroots} for a description of $\fL_{\cP,\Lambda}$ and
\cref{T1} for the cohomology vanishing.

Now we can prove the main \cref{thm:main}: the vanishing of $H^2$
implies that the local models from \cref{lemma:local}\ref{it:local1}
glue to a global model $M$. The vanishing of $H^1$ implies that all
local isomorphisms from \cref{lemma:local}\ref{it:local2} glue to a
global isomorphism.

\begin{remark}

  In modern parlance, the argument goes as follows: Let
  $(\cP,\Lambda)$ be a spherical pair and let $\MF_{\cP,\Lambda}$ be
  the category of (locally) convex \mf\ manifolds $M$ with
  $\Lambda_M=\Lambda$ and $\cP_M\subseteq\cP$ open. Then
  \cref{lemma:local} means that $\MF_{\cP,\Lambda}$ is a \emph{gerbe}
  over $\cP$. Since all automorphism groups are abelian, its
  \emph{band} $\fL_{\cP,\Lambda}$ is a sheaf of abelian groups. Now
  $H^2(\cP,\fL_{\cP,\Lambda})=0$ means that $\MF_{\cP,\Lambda}$ is
  equivalent to the category $\|Tors|\fL_{\cP,\Lambda}$ of
  $\fL_{\cP,\Lambda}$-torsors. Because of the vanishing of $H^1$, all
  torsors over $\cP$ are trivial. So $\MF_{\cP,\Lambda}$ contains up
  to isomorphism exactly one object $M$ with $\cP_M=\cP$.

\end{remark}

\section{The automorphism group of a multiplicity free manifold}
\label{sec:auto}

Return to the notation of section \ref{sec:affine}, i.e., $\Vfa$ is an
Euclidean vector space, $\fa$ is an affine space for $\Vfa$, and
$\Phi$ is a root system on $\fa$ with fundamental alcove
$\cA\subseteq\fa$. Let moreover $\Lambda\subseteq\Vfa$ be a weight
lattice for $\Phi$. This defines the torus $A:=\Vfa/\Lambda^\vee$.

Let $\cP\subseteq\cA$ be a locally polyhedral convex subset with
non-empty interior $\cP^0$. This means, in particular, that
$\cP^\circ$ is dense in $\cP$. We proceed to define a number of
properties that a map $\phi:\cP\to A$ may have.

\begin{enumerate}

\item A map $\phi:\cP\rightarrow A$ is called \emph{smooth} if for
  every $x\in\cP$ there is a smooth map $\tilde\phi:U\rightarrow A$
  where $U$ is an open neighborhood of $x\in\fa$ and
  $\tilde\phi|_{\cP\cap U}=\phi|_{\cP\cap U}$. Let $\hat\cC_{\fa,x}$
  and $\hat\cC_{A,a}$ be the completions of the local ring of smooth
  functions (i.e., formal power series) in $x\in\fa$ and $a\in A$,
  respectively. Then if $\phi$ is smooth with $\phi(x)=a$ it induces
  an algebra homomorphism
  $\hat\phi_x:\hat\cC_{A,\phi(x)}\to\hat\cC_{\fa,x}$. In fact, by
  continuity, $\hat\phi_x=\hat{\tilde\phi}_x$ is independent of the
  choice of $\tilde\phi$ since $\cP^0$ is dense in $\cP$.

\item The scalar product on $\Vfa$ induces a canonical symplectic
  structure on the product spaces of $\fa\times A$ by
  \[
    \omega(\xi_1+\eta_1,\xi_2+\eta_2)=\<\xi_1,\eta_2\>-\<\xi_2,\eta_1\>.
  \]
  After the identifications $\fa\cong\Vfa\cong\Vfa^*$ this is just the
  canonical symplectic form on the cotangent bundle $T^*_A$.  Now, a
  smooth map $\phi:\cP\to A$ is called \emph{closed} if the graph of
  $\phi|_{\cP^0}$ is a Lagrangian submanifold of $\fa\times A$.

\item Recall that the Weyl group $W$ of $\Phi$ fixes the lattice
  $\Lambda$ and therefore acts on $A$ by way of group
  automorphisms. Let $W_0$ be a subgroup of $W$ and for any $x\in\cP$
  let $(W_0)_x$ be its isotropy group. Then a smooth map
  $\phi:\cP\to A$ is called \emph{$W_0$-equivariant} if for every
  $x\in\cP$ the point $a=\phi(x)$ is a $(W_0)_x$-fixed point and the
  induced homomorphism
  \[
    \hat\phi_x:\hat\cC_{A,a}\to\hat\cC_{\fa,x}
  \]
  is $(W_0)_x$-equivariant.

\item Let $\Phi_0\subseteq\Phi$ be a subroot system and let $W_0$ be
  its Weyl group.  A smooth map $\phi:\cP\rightarrow A$ is called {\it
    $\Phi_0$-equivariant} if it is $W_0$-equivariant and
  \[\label{eq:atpx}
    \tilde\alpha(\phi(x))=1
  \]
  for all $x\in \cP$ and all roots $\alpha\in\Phi_0$ with
  $\alpha(x)=0$.

\end{enumerate}

We comment on these notions a bit more.

First of all, the notion of closedness can be rephrased in two
ways. Since $\exp:\Vfa\to A$ is a covering and $\cP$ is simply
connected, the mapping $\phi$ can be lifted to a smooth map
$\tilde\phi:\cP\to\Vfa$. Because of the identification
$\Vfa\cong\Vfa^*$ one can think of $\tilde\phi$ as a $1$-form. Then it
easy to see that $\phi$ is closed if and only if $\tilde\phi$ is a
closed $1$-form (whence the name). Moreover, this shows that all
closed maps are of the form $\exp(\nabla f)$ where $f$ is a smooth
function on $\cP$ and $\nabla f$ is its gradient.

For another way to see closedness, consider the derivative of $\phi$
at $x$ which is an endomorphism
\[
  D_x\phi:\Vfa=T_x\fa\to T_{\phi(x)}A=\Vfa.
\]
Then $\phi$ is closed if and only if $D_x\phi$ is a self-adjoint
operator for all $x\in\cP$

Since $\cP$ lies in $\cA$, the fundamental alcove of $W$, every
$W_0$-orbit of $\fa$ meets $\cP$ in at most one point. Therefore it
can be shown (non-trivially!) that the condition of $W_0$-equivariance
is equivalent to the existence of a smooth (honestly)
$W_0$-equivariant map $\tilde\phi:W_0\cP\to A$.

Finally, what is the difference between $W_0$- and
$\Phi_0$-equivariance? Actually not much. For $\alpha\in\Phi_0$ let
$s_\alpha\in W_0$ be the corresponding reflection. Then
$s_\alpha\in(W_0)_x$ if and only if $\alpha(x)=0$. In this case,
$W_0$-equivariance implies $s_\alpha(a)=a$ where $a=\phi(x)$. This
means
\[\label{eq:1A}
  \tilde\alpha^\vee(\tilde\alpha(a))=1_A
\]
by equation \eqref{e2}. Applying $\tilde\alpha$ to both sides, we see
(equation \eqref{e1}) that
\[
  \tilde\alpha(\phi(x))^2=1
\]
follows already from $W_0$-equivariance. So $\Phi_0$-equivariance just
means that $\tilde\alpha(\phi(x))$ additionally equals $1$ instead of
$-1$. Observe that applying instead of $\tilde\alpha$ a general
character $\chi\in\Lambda$ to equation \eqref{eq:1A} we get
\[
  \tilde\alpha(\phi(x))^{\<\chi,\Valpha^\vee\>}=1.
\]
Thus, if the root $\alpha$ satisfies $\<\Lambda,\Valpha^\vee\>=\ZZ$
then the condition \eqref{eq:atpx} is in fact superfluous.  This
phenomenon will be explored more carefully in section
\ref{sec:globalroot}.

Observe that all four conditions are local in $\cP$. Hence the
following makes sense:

\begin{definition}

  For any open subset $U\subseteq\cP$ let $\fL_{\cP,\Lambda}^\Phi(U)$
  be the set of of all smooth, closed, and $\Phi$-equivariant maps
  $\phi:U\to A$.

\end{definition}

Clearly, $\fL_{\cP,\Lambda}^\Phi$ is a sheaf of abelian groups on
$\cP$. Let $x\in\cP$. Then it is well known that the isotropy group
$W_x$ is the Weyl group of the (finite) root system
\[
  \Phi_x:=\{\alpha\in\Phi\mid\alpha(x)=0\}.
\]
Moreover, $W_y$ and $\Phi_y$ are contained in $W_x$ and $\Phi_x$,
respectively, for $y$ in a suitable open neighborhood $U$ of $x$. This
implies for the restriction to $U$:
\[
  \res_U\fL^\Phi_{\cP,\Lambda}=\res_U\fL^{\Phi_x}_{\cP,\Lambda}.
\]
This observation leads to the following generalization.

\begin{definition}

  A \emph{local system of roots $\Phi(*)$ on $\cP$} is a family
  $\big(\Phi(x)\big)_{x\in\cP}$ of root systems on $\fa$ such that for
  each $x\in \cP$:

  \begin{enumerate}

  \item\label{it:locroot1} $\Phi(y)=\Phi(x)_y$ for all $y$ in a
    sufficiently small neighborhood of $x$ in $\cP$.

  \item\label{it:locroot2} Every root $\alpha\in\Phi(x)$ is either
    non-negative or non-positive on $\cP$.

  \end{enumerate}

  An \emph{integral local system of roots on $\cP$} is a pair
  $(\Phi(*),\Lambda)$ such that $(\Phi(x),\Lambda)$ is a an integral
  root system for every $x\in\cP$.

\end{definition}

Observe that setting $y=x$ in condition \ref{it:locroot1} yields that
$\Phi(x)$ is centered at $x$, i.e., that $\alpha(x)=0$ for all
$\alpha\in\Phi(x)$. This implies in turn that all local root systems
$\Phi(x)$ are finite. The second condition \ref{it:locroot2} implies
that the set $\Phi^+(x)$ of roots which are non-negative on $\cP$
forms a set of positive roots for $\Phi(x)$. This allows to define a
set of simple roots
\[
  S(x)\subseteq \Phi^+(x)\subseteq\Phi(x).
\]

An main (and often the only) example of an integral local system of
roots on $\cP$ is $\Phi(x):=\Phi_x$ where $(\Phi,\Lambda)$ is an
integral root system on $\fa$ such that $\cP$ is entirely contained in
an alcove of $\Phi$. Such local systems are called \emph{trivial}. For
typical examples of trivial local root systems see figures
\eqref{eq:fig1}, \eqref{eq:fig2}, and \eqref{eq:fig3} where the gray
area is $\cP$ and the dashed lines denote the reflection hyperplanes.

Now it is easy to extend the definition of $\fL^\Phi_{\cP,\Lambda}$.

\begin{definition}

  Let $(\Phi(*),\Lambda)$ be an integral local system of roots on
  $\cP$. Then $\fL^{\Phi(*)}_{\cP,\Lambda}$ is the intersection of all
  $\fL^{\Phi(x)}_{\cP,\Lambda}$, $x\in\cP$, inside the sheaf of
  $A$-valued maps on $\cP$. Concretely, given $U\subseteq\cP$ open,
  then $\fL^{\Phi(*)}_{\cP,\Lambda}(U)$ is the set of all smooth,
  closed maps $\phi:U\to A$ which are $\Phi(x)$-equivariant for all
  $x\in U$.

\end{definition}

Observe that the coherence property \ref{it:locroot1} implies that
\[
  \res_U\fL^{\Phi(*)}_{\cP,\Lambda}=\res_U\fL^{\Phi(x)}_{\cP,\Lambda}.
\]
where $U$ is a suitable open neighborhood of $x\in\cP$.

Now let $(M,m)$ be a \mf\ $K\tau$-manifold. Let $m_+:M\to \cP_M$ be
its invariant moment map. For any $U\subseteq \cP_M$ open let
$M(U):=m_+^{-1}(U)$. This is again a \mf\ $K\tau$-manifold. Let
$\cAut_M$ be the sheaf of groups on $\cP_M$ defined by
$\cAut_M(U)=\Aut(M(U))$.

\begin{theorem}\label{thm:localroots}

  Let $K$ be a simply connected compact Lie group and $M$ be a convex
  \mf\ $K\tau$-manifold with momentum image $\cP=\cP_M$ and character
  group $\Lambda=\Lambda_M$. Then there is an integral local system of
  roots $(\Phi(*),\Lambda)$ on $\cP$ such that
  \[
    \cAut_M\cong\fL^{\Phi(*)}_{\cP,\Lambda}.
  \]

\end{theorem}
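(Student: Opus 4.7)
The plan is to reduce the statement to the analogous Hamiltonian result from \cite{KnopAuto} via the cross-section equivalence of \cref{thm:cross}, and then show that the local root systems thus obtained on overlapping patches piece together into a single integral local system of roots on all of $\cP$. For each $a\in\cP$ choose an open convex neighborhood $U_a\subseteq\cA$ as in \cref{thm:cross}. The equivalence of categories provided there is functorial, so it induces an isomorphism between $\cAut_M$ restricted to $\cP\cap U_a$ and the automorphism sheaf $\cAut_{M_0^{(a)}}$ of the associated Hamiltonian $L$-manifold, where $L=K(a)$.

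In the Hamiltonian setting, the sheaf $\cAut_{M_0^{(a)}}$ has been computed in \cite{KnopAuto}: it is isomorphic to $\fL^{\Phi_a(*)}_{\cP_{M_0^{(a)}},\Lambda}$ for an integral local system of roots $\Phi_a(*)$ on $\cP_{M_0^{(a)}}$. Concretely, at a point $y$ the root system $\Phi_a(y)$ is extracted from the smooth affine spherical $K(y)_\CC$-variety furnished by the Brion-Sjamaar local model at $y$: it consists of the simple roots of $K(y)$ acting trivially on the principal isotropy torus $A_M$, encoding which reflections of the Weyl group $W_y$ fix a generic orbit pointwise. The lattice component is always $\Lambda=\Lambda_M$ since the character group is a global invariant.

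To define $\Phi(x)$ globally for $x\in\cP$, I take any $a\in\cP$ with $x\in U_a$ and set $\Phi(x):=\Phi_a(x-a)$ (viewed as a root system on $\fa$ centered at $x$). This is independent of the choice of $a$: on the overlap $U_a\cap U_{a'}$ the cross-section equivalences agree up to a canonical isomorphism, and the local spherical model at $x$ is determined up to isomorphism by Losev's theorem (already used in \cref{lemma:local}), so the induced root system is the same. The two coherence axioms are then immediate: condition \ref{it:locroot1} follows from the corresponding coherence of $\Phi_a(*)$ in the Hamiltonian case and the fact that the latter says exactly that nearby local models refine to faces of the tangent cone; condition \ref{it:locroot2} holds because each $\alpha\in\Phi(x)$ vanishes at $x$ and is a root of the reductive group $K(x)$, whose Weyl chamber is precisely $C_x\cP$, so $\alpha$ has a fixed sign on $\cP$ near $x$, and convexity of $\cP$ propagates this sign globally.

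The identification $\cAut_M\cong\fL^{\Phi(*)}_{\cP,\Lambda}$ is then obtained by gluing: on each patch $\cP\cap U_a$ the cross-section equivalence together with the Hamiltonian case gives the isomorphism, and these local isomorphisms agree on overlaps because the description of an automorphism as a smooth closed equivariant map $\phi:U\to A_M$ is intrinsic (it uses only $A_M$, the moment map, and the local root data), so it is preserved by the canonical comparison isomorphisms between cross sections. The main obstacle is the last compatibility check: one must verify that the bijection between automorphisms and maps $\phi:\cP\cap U_a\to A_M$ provided by the Hamiltonian theorem does not depend on which $a$ is used, i.e., that the $L$-equivariance of an automorphism on the cross section at $a$ and $L'$-equivariance at $a'$ translate into the \emph{same} $\Phi(x)$-equivariance condition at every intermediate $x$; this reduces to the naturality of the Brion-Sjamaar local model under restriction to smaller Levis, which is essentially tautological but must be spelled out carefully.
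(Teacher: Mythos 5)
Your overall strategy — reducing to the Hamiltonian case via \cref{thm:cross}, citing the computation of $\cAut_{M_0}$ from \cite{KnopAuto}*{Thm.\ 9.2}, and then gluing — matches the paper's. The definition of $\Phi(x)$, the uniqueness via Losev, and the verification of the two coherence axioms for a local system of roots are all handled essentially as the paper does (the paper defers to \cite{KnopAuto}*{eqn.\ (9.4)} for the coherence, but the substance is the same).

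However, there is a genuine gap in the compatibility step, which you yourself flag as ``the main obstacle'' but then dispose of with an appeal to ``intrinsicness'' and ``naturality of the Brion–Sjamaar local model under restriction to smaller Levis.'' This is not a proof, and it is also not how the paper closes the gap. The problem is concrete: \cite{KnopAuto} produces, for each cross-section at a base point $a$, an isomorphism $\psi_{U_a}$ between automorphisms and $A$-valued maps, and a priori this isomorphism depends on $a$ (through the identification of the cross-section, the Levi $L=K(a)$, its Cartan, etc.). ``Intrinsic'' is asserted, not shown, and Brion–Sjamaar naturality is not obviously the right tool since it compares local models at a single point, whereas what must be compared are the two sheaf isomorphisms coming from two different cross-sections over their overlap.

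The paper's actual resolution is different and worth internalizing. It exploits the density of the interior $\cP^0$ in $\cP$: over $\cP^0$ all roots are strictly positive, so $\res_{U^0}\fL^{\Phi(x)}_{U,\Lambda}$ is just the sheaf of smooth closed $A$-valued maps, each of which is $\exp(\nabla f)$ for a smooth $f$. By \cite{KnopAuto}*{Thm.\ 9.1\emph{i)}}, $\psi_U(\phi)$ is then the time-$1$ flow of the Hamiltonian vector field $H_{F_0}$ for $F_0=f\circ(m_0)_+$, and \cref{qHamFlow} shows this equals the time-$1$ flow of $H_F$ for $F=f\circ m_+$. The latter is a description of $\psi_U(\phi)$ depending only on $M$, $m$, and $f$, not on the chosen cross-section; compatibility on overlaps then follows on $U\cap U'\cap\cP^0$, hence everywhere by density. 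Your proposal omits both ingredients — the restriction to $\cP^0$ and the Hamiltonian flow characterization from \cref{qHamFlow} — and these are precisely what make the gluing work.
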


\begin{proof}

  Fix $a\in\cP$, let $u=\exp(x)$ be its image in $K$, and let $L$ be
  the centralizer of $u$ in $K$ (with respect to the twisted
  action). Then \cref{thm:cross} shows in particular that there exists
  an open neighborhood $U$ of $x\in\cP$ such that
  \[
    \cAut_{M(U)} = a+\cAut_{M_0}.
  \]
  Here, the left hand side denotes the sheaf of automorphisms of
  $M(U)=m_+^{-1}(U)$ as a \qH\ $K\tau$-manifold. The right hand side
  is the sheaf of automorphisms of $M_0=\log M_U$ as Hamiltonian
  $L$-manifold. The latter is a sheaf on $U-a$, so the ``$a+$''
  indicates translation back to $U$. Now the sheaf $\cAut M_0$ has
  been determined in \cite{KnopAuto}*{Thm. 9.2} with the result that
  there is a unique finite root system $\Phi(x)$, centered at $x$, and
  an isomorphism
  \[
    \fL^{\Phi(x)}_{U,\Lambda}\cong\cAut_{M_0}.
  \]
  Thereby, the root systems $\Phi(x)$ indeed form a local systems of
  roots by \cite{KnopAuto}*{eqn.~(9.4)}.

  This already shows that $\cAut_M$ and $\fL^{\Phi(*)}_{\cP,\Lambda}$
  are locally isomorphic. It remains to show that the isomorphisms
  \[
    \psi_U:\res_U\fL^{\Phi(*)}_{\cP,\Lambda}=\fL^{\Phi(x)}_{U,\Lambda}
    \overset\sim\to\res_U\cAut_M
  \]
  are compatible. For this we use that $\cP^0$, the interior of $\cP$,
  is dense in $\cP$. Thus, also $U^0=\cP^0\cap U$ is dense in $U$.
  Therefore it suffices to show that $\res_{U^0}\psi_U$ has a
  description which is independent of the choice of $x$ and $U$.

  To this end, observe that all roots $\alpha\in\Phi(x)$ are strictly
  positive on $\cP^0$ and therefore also on $U^0$. This implies that
  $\res_{U^0}\fL^{\Phi(x)}_{U,\Lambda}=\fL^\leer_{U^0,\Lambda}$ is
  just the sheaf of smooth closed maps $\phi:U^0\to A$. As mentioned
  above, all such $\phi$ are of the form $\phi=\exp\nabla f$ where $f$
  is a smooth function on $U^0$. Since $(m_0)_+$ and $m_+$ are smooth
  over $\cP^0$, the functions $F_0:=f\circ (m_0)_+$ and $F=f\circ m_+$
  are smooth on $M_{U^0}$ and $M(U^0)$, respectively. Moreover, it
  follows from \cite{KnopAuto}*{Thm.~9.1\emph{i)}} that $\phi$ is the
  flow of the Hamiltonian vector field $H_{F_0}$ at time $t=1$. The
  compatibility of the Hamiltonian vector fields $H_{F_0}$ and $H_F$
  (\cref{qHamFlow}) shows that $\psi_U(\phi)$ is the flow of $H_F$ at
  $t=1$. This shows that any two isomorphisms $\psi_U$ and $\psi_{U'}$
  coincide on $U\cap U'\cap\cP^0$ and therefore, by density, also on
  $U\cap U'$.
\end{proof}

\section{The global Weyl group}

In this and the following section we state and prove a criterion for
when an integral local system of roots $(\Phi(*),\Lambda)$ on a set
$\cP$ is trivial. For this consider first the system
$(W(x))_{x\in\cP}$ of Weyl groups of $\Phi(x)$. It forms a \emph{local
  system of reflection groups} in the sense that for all $x\in\cP$:
\begin{enumerate}

\item\label{it:localRef1} $W(y)=W(x)_y$ for all $y$ in a sufficiently
  small neighborhood of $x$ in $\cP$.

\item\label{it:localRef2} Let $s\in W(x)$ be a reflection with fixed
  point set $H_s$. Then $\cP$ lies entirely in one of the two closed
  halfspaces of $\fa$ which are defined by $H_s$.

\end{enumerate}

Observe that again the case $y=x$ of condition \ref{it:localRef1}
implies that $x$ is a fixed point of $W(x)$. In particular, all local
groups $W(x)$ are finite. In the following let $W\gl\subseteq M(\fa)$
be the subgroup generated by the union of all $W(x)$, $x\in\cP$. Our
aim is to prove that $W(x)=(W\gl)_x$ for all $x\in\cP$.

To this end, we first state a mostly classical criterion for when a
given set of reflections is the set of simple reflections for an
Euclidean reflection group.

\begin{lemma}\label{L1}

  Let $\alpha_1,\ldots,\alpha_n$ be non-constant affine linear
  functions on $\fa$ with:

  \begin{enumerate}

  \item\label{L1i1} For any $i\ne j$, the angle between $\Valpha_i$
    and $\Valpha_j$ equals $\pi-\frac\pi\ell$ with
    $\ell\in\ZZ_{\ge2}\cup\{\infty\}$.

  \item\label{L1i2} There is a point $x\in\fa$ with $\alpha_i(x)>0$
    for all $i=1,\ldots,n$.

  \end{enumerate}

\noindent
Let $W\subseteq M(\fa)$ be the group generated by the reflections
$s_{\alpha_1},\ldots,s_{\alpha_n}$. Then $W$ is an Euclidean
reflection group,
\[
  \cA:=\{x\in\fa\mid \alpha_1(x)\ge0,\ldots,\alpha_n(x)\ge0\}
\]
is an alcove for $W$, and the reflections
$s_{\alpha_1},\ldots,s_{\alpha_n}$ are precisely the simple reflection
with respect to $\cA$.

\end{lemma}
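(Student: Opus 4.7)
The strategy is to reduce this to Tits' theorem on the canonical representation of Coxeter groups. From hypothesis \ref{L1i1}, define integers $m_{ij}=\ell\in\ZZ_{\ge2}\cup\{\infty\}$ by the rule that the angle between $\Valpha_i$ and $\Valpha_j$ is $\pi-\pi/m_{ij}$, and form the abstract Coxeter group $W_0=\langle s_1,\ldots,s_n\mid s_i^2=1,\ (s_is_j)^{m_{ij}}=1\rangle$.

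First I would verify that $s_i\mapsto s_{\alpha_i}$ extends to a group homomorphism $\varphi\colon W_0\to M(\fa)$; only the braid relations need checking. If $\Valpha_i,\Valpha_j$ are linearly independent, restricting to a two-dimensional affine plane transverse to $H_{\alpha_i}\cap H_{\alpha_j}$ reduces the question to composing two planar reflections about lines meeting at angle $\pi/m_{ij}$, which is a rotation by $2\pi/m_{ij}$ and hence of order $m_{ij}$. If the gradients are linearly dependent they must be antiparallel (angle $\pi$), forcing $m_{ij}=\infty$; hypothesis \ref{L1i2} then excludes $H_{\alpha_i}=H_{\alpha_j}$, so $s_{\alpha_i}s_{\alpha_j}$ is a nontrivial translation and has infinite order.

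Next I would invoke Tits' theorem on the canonical representation of a Coxeter group: $W_0$ acts faithfully on $\RR^n$ preserving the symmetric bilinear form $B(e_i,e_j)=-\cos(\pi/m_{ij})$, and the simplicial cone $\{v\mid B(v,e_i)\ge0\text{ for all }i\}$ is a strict fundamental domain. By \ref{L1i1}, the Gram matrix of the unit vectors $\Valpha_i/\lvert\Valpha_i\rvert$ equals $B$, so this canonical representation realizes isometrically inside $\Vfa$; the affine lift sending $s_i$ to $s_{\alpha_i}$ then realizes $W_0$ inside $M(\fa)$. Since the interior point $x$ from \ref{L1i2} lies in the interior of $\cA$ and the walls of $\cA$ are precisely the hyperplanes $H_{\alpha_i}$, transferring the fundamental domain property shows that $W\cong W_0$ admits $\cA$ as a strict fundamental domain and that $s_{\alpha_1},\ldots,s_{\alpha_n}$ are its simple reflections.

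The main obstacle is showing that the action of $W$ is \emph{proper}, not merely faithful. For this I would exploit that the scalar product on $\Vfa$ is positive definite, which forces the Tits form $B$ to be positive semi-definite. By the classical classification (Bourbaki, \emph{Groupes et alg\`ebres de Lie}, Ch.~V, \S4), a Coxeter group of positive semi-definite type is a product of finite and affine Coxeter groups, each of which acts properly on its canonical Euclidean realization. Transporting this properness back to $\fa$ along the embedding $W_0\hookrightarrow M(\fa)$ completes the proof that $W$ is a Euclidean reflection group with alcove $\cA$ and simple reflections $s_{\alpha_1},\ldots,s_{\alpha_n}$.
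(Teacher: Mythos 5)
Your approach — reduce to the abstract Coxeter group $W_0$, invoke Tits' theorem on the canonical representation, and then use the Bourbaki classification of positive semi-definite Coxeter systems — is a genuinely different route from the paper's. The paper simply cites Vinberg's theorem on reflections about the faces of a convex polyhedron (applied to the subset of $\alpha_i$ whose hyperplanes are honest walls of $\cA$), and then devotes its entire argument to ruling out redundant $\alpha_i$'s via a two-line Gram matrix computation.

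However, your write-up has two real gaps. First, the claimed ``isometric realization'' of the canonical representation inside $\Vfa$ fails exactly in the affine case. When $W_0$ has an irreducible affine factor, the Tits form $B$ is degenerate with a one-dimensional radical, and correspondingly the unit vectors $\Valpha_i/\lVert\Valpha_i\rVert$ are linearly \emph{dependent} in $\Vfa$; the map $e_i\mapsto\Valpha_i/\lVert\Valpha_i\rVert$ collapses the radical and is not an embedding. Moreover, the induced linear quotient representation is not faithful — in the $\widetilde\sA_1$ case both generators map to the same reflection of the one-dimensional quotient. The passage from Tits' linear representation to a faithful affine action on $\fa$ requires the dual (contragredient) representation and a choice of level hyperplane, plus a verification that the translation lengths produced by the $\alpha_i$ are compatible with one another; none of this is addressed. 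The simplicial cone $\{B(v,e_i)\ge 0\}$ is also a fundamental domain only for the Tits cone, not for all of $V$, which in the affine case is a proper subset — so even the statement of ``strict fundamental domain'' does not transfer for free.

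Second, you assert that ``the walls of $\cA$ are precisely the hyperplanes $H_{\alpha_i}$'' as if it were obvious. This is precisely what is at stake: a priori one of the $\alpha_i$ could be a nonnegative combination of the others, so that $H_{\alpha_i}\cap\cA$ has codimension larger than one and $s_{\alpha_i}$ is not simple. The paper handles this head-on: if $\alpha_{m+1}=\sum_{i\le m}c_i\alpha_i$ with $c_i\ge 0$, then since condition \ref{L1i1} forces $\langle\Valpha_i,\Valpha_{m+1}\rangle\le 0$, one gets $\lVert\Valpha_{m+1}\rVert^2\le 0$, a contradiction. In a properly executed version of your argument this non-redundancy would come for free from the simplex structure of the affine alcove, but as written it is an unproven claim. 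You should either supply the transfer argument in full (dual representation, level hyperplane, translation matching) or just run the paper's short Gram matrix argument and then cite Vinberg directly on the non-redundant set.
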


\begin{proof}

  Condition \ref{L1i2} implies that $\cA$ is a convex polyhedron with
  non-empty interior. Let, after renumbering,
  $\alpha_1,\ldots,\alpha_m$ be the non-redundant functions defining
  $\cA$, i.e., whose intersection $\{\alpha_i=0\}\cap\cA$ is of
  codimension $1$ in $\cA$. Then a classical theorem (see e.g.
  \cite{Vin}*{Thm.~1} for a much more general statement) asserts that,
  under condition \ref{L1i1}, $s_{\alpha_1},\ldots,s_{\alpha_m}$ are
  the simple reflections for an Euclidean reflection group $W$ and
  that $\cA$ is a fundamental domain. So, it remains to show that
  $m=n$. Suppose not. Then $\alpha_{m+1}$ would be redundant. This
  implies that there are real numbers $c_1,\ldots,c_m\ge0$ such that
  $\alpha_{m+1}=\sum_{i=1}^mc_i\alpha_i$. From \ref{L1i1} we get that
  \[
    \<\Valpha_i,\Valpha_{m+1}\>=
    \norm\Valpha_i\norm\norm\Valpha_{m+1}\norm\cdot\cos(\pi-\frac\pi\ell)\le0
  \]
  for $i=1,\ldots,m$ and therefore the contradiction
  $\<\Valpha_{m+1},\Valpha_{m+1}\>\le0$.
\end{proof}

Here is our criterion:

\begin{proposition}\label{L2}

  Let $\cP\subseteq\fa$ be a convex subset with non-empty interior and
  let $W(*)$ be a local system of reflection groups on $\cP$. Let
  $W\gl\subseteq M(\fa)$ be the group generated by the union of all
  $W(x)$, $x\in\cP$. Assume moreover that every $W\gl$-orbit meets
  $\cP$ in at most one point. Then $W\gl$ is an Euclidean reflection
  group with $W(x)=(W\gl)_x$ for all $x\in\cP$. Moreover, there is a
  unique alcove $\cA$ of $W\gl$ with $\cP\subseteq\cA$ and $\cP$ has a
  non-empty intersection with every wall of $\cA$.

\end{proposition}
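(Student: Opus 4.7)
The approach is to construct a set of candidate simple reflections together with a convex polyhedron containing $\cP$, and then to invoke \cref{L1} in order to show that $W\gl$ is an Euclidean reflection group with that polyhedron as an alcove.

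First, I would extract the local data. For each $x\in\cP$, condition \textit{a)} of a local system of reflection groups (applied with $y=x$) forces $x$ to be a fixed point of $W(x)$, while condition \textit{b)} together with the convexity of $\cP$ ensures that $\cP-x$ lies in a single closed chamber $\cC_x$ of $W(x)$. Let $\Delta_x$ denote the set of simple reflections of $W(x)$ relative to $\cC_x$, set $\Delta:=\bigcup_{x\in\cP}\Delta_x$, and for each $s\in\Delta$ let $\alpha_s$ be the affine linear function with $H_s=\{\alpha_s=0\}$ and $\alpha_s\ge0$ on $\cP$. Define
\[
  \cA:=\{y\in\fa\mid\alpha_s(y)\ge0\text{ for all }s\in\Delta\},
\]
a closed convex set containing $\cP$.

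Second, I would establish local polyhedrality of $\cA$ along $\cP$: for each $x\in\cP$ there is a neighborhood $U$ of $x$ in $\fa$ with $\cA\cap U=(x+\cC_x)\cap U$. The easy direction is that $x+\cC_x$ satisfies every inequality from $\Delta_x$; for the reverse I would use the orbit hypothesis to rule out distant walls $H_{s'}$ (with $s'\in\Delta_{x'}$ for some $x'\ne x$) passing through $U$ and creating further active constraints, since such behaviour would let the associated reflection carry an interior point of $\cP$ near $x$ to another point of $\cP$ near $x'$, violating the hypothesis. This step also identifies the walls of $\cA$ through $x$ with precisely $\{H_s\mid s\in\Delta_x\}$.

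The main obstacle is verifying the angle condition of \cref{L1} between two non-redundant walls $H_s,H_{s'}$ of $\cA$ with $\dim(H_s\cap H_{s'}\cap\cA)=\dim\fa-2$. The plan is to produce a point $y\in H_s\cap H_{s'}\cap\cP$: at such $y$ both $s$ and $s'$ fix $y$, and by coherence (condition \textit{a)}) they must lie in $W(y)$, a finite reflection group, forcing $\langle s,s'\rangle$ to be a finite dihedral group and the angle between $\Valpha_s$ and $\Valpha_{s'}$ to be of the form $\pi-\pi/\ell$ with $\ell\in\ZZ_{\ge2}$. The existence of $y$ is the delicate point; here I would take a point $z$ in the relative interior of the codim-2 face $H_s\cap H_{s'}\cap\cA$, join $z$ to points of $\cP$ already known to lie on $H_s$ and on $H_{s'}$ (provided by the construction of $\Delta_x$ and $\Delta_{x'}$), and use convexity of $\cP$, density of $\cP^0$ in $\cP$, the local polyhedrality established above, and the orbit hypothesis (which prevents codim-2 faces of $\cA$ from drifting off $\cP$ under $W\gl$) to deduce that the codim-2 face meets $\cP$.

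The second hypothesis of \cref{L1} is automatic: any point of the interior $\cP^0$ lies strictly off every $H_s$ by construction. \cref{L1} then furnishes an Euclidean reflection group $W$ with alcove $\cA$ whose simple reflections are the non-redundant elements of $\Delta$. By the local polyhedrality step, for each $x\in\cP$ the stabilizer $W_x$ in $W$ is generated by the simple reflections through $x$, which are exactly $\Delta_x$, so $W_x=W(x)$. Thus $W$ contains every $W(x)$ and is conversely generated by elements of them, giving $W=W\gl$ and $(W\gl)_x=W(x)$ for all $x\in\cP$. The alcove $\cA$ is unique because it is the closure of the unique $W\gl$-chamber containing $\cP^0$, and every wall of $\cA$ meets $\cP$ because each such wall is $H_s$ for some $s\in\Delta_x$ with $x\in\cP\cap H_s$.
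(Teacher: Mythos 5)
Your strategy of reducing the angle condition in \cref{L1} to the existence of a common point $y\in H_s\cap H_{s'}\cap\cP$ has a genuine gap: such a point need not exist, even when both walls are non-redundant and their codimension-two face is a nonempty face of $\cA$. For a concrete instance, let $\fa=\RR^2$ and take $\cP$ to be the solid triangle with vertices $(0,0)$, $(1,0)$, $(0,1)$. Put $W((0,0))=\langle s_{x=0},s_{y=0}\rangle$, $W((1,0))=\langle s_{y=0},s_{x=1}\rangle$, $W((0,1))=\langle s_{x=0},s_{y=1}\rangle$, with the evident rank-$\le1$ groups elsewhere on $\cP$. This is a local system of reflection groups, $W\gl$ is the affine Weyl group of type $\sA_1^{(1)}\times\sA_1^{(1)}$ with fundamental domain $[0,1]^2$, so the orbit hypothesis holds. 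Your $\cA$ is the square $[0,1]^2$, and the two non-redundant walls $\{x=1\}$, $\{y=1\}$ meet only at $(1,1)\notin\cP$. The parenthetical assertion that ``the orbit hypothesis prevents codim-2 faces of $\cA$ from drifting off $\cP$'' is therefore false. You also do not treat the case of two parallel walls, which still must satisfy the angle condition of \cref{L1} (here with $\ell=\infty$, i.e.\ that the two gradients be antiparallel).

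The paper's proof circumvents precisely this difficulty and is worth contrasting with your plan. For a pair $s_1\ne s_2\in S$ it sets $E=H_1\cap H_2$ and first proves the dihedral group $W'=\langle s_1,s_2\rangle$ is finite: otherwise the $W'$-translates of $H_1$ would be dense, clashing with the orbit hypothesis. It then shows $\{s_1,s_2\}$ is a set of simple reflections for $W'$, splitting into two cases. When $E\cap\cP\ne\leer$ the argument is essentially the one you propose. When $E\cap\cP=\leer$ — the case your plan cannot reach — one writes $\alpha_i=c_1\alpha_1'+c_2\alpha_2'$ with $c_1,c_2\ge0$ in terms of the genuine simple roots $\alpha_1',\alpha_2'$ of $W'$ for the unique $W'$-chamber containing $\cP$, and observes that if both $c_j>0$ then any $x\in\cP$ with $\alpha_i(x)=0$ would force $\alpha_1'(x)=\alpha_2'(x)=0$, hence $x\in E\cap\cP$, a contradiction. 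This coefficient argument, not a common-point argument, is the missing ingredient; the parallel case is handled separately by a short computation of signs. You would need to import both of these steps to close your proof.
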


\begin{proof}

  First we claim that
  \[\label{eq:e1}
    W(x)_y=W(y)_x\quad\text{for all $x,y\in\cP$}.
  \]
  Indeed, let $l=[x,y]\subseteq\fa$ be the line segment joining $x$
  and $y$. Then $l\subseteq\cP$ since $\cP$ is convex. For any
  $z\in l$ let
  \[
    W(z)_l:=\{w\in W(z)\mid wu=u\text{ for all $u\in l$}\}
  \]
  Then
  \[
    W(u)_l=(W(z)_u)_l=W(z)_l
  \]
  for all $u\in l$ which are sufficiently close to $z$. This means
  that the map $z\mapsto W(z)_l$ is locally constant, hence constant,
  on $l$. Thus
  \[
    W(x)_y=W(x)_l=W(y)_l=W(y)_x
  \]
  finishing the proof of the claim.

  Let $s=s_\alpha\in W\gl$ be a reflection with fixed point set
  $H:=\{\alpha=0\}$. We claim that $H$ does not meet $\cP^0$, the open
  interior of $\cP$. Otherwise, there would be points $x,y\in\cP^0$
  with $\alpha(x)>0$ and $\alpha(y)<0$. The line segment joining $x$
  and $y$ lies entirely in $\cP^0$ and meets $H$ in exactly one point
  $z$. Moreover there is $\epsilon>0$ such that both points
  $z_\pm:=z\pm\epsilon\Valpha$ are in $\cP^0$. But then $z_+$ and
  $z_-=s(z_+)$ would be two different points of $\cP$ lying in the
  same $W\gl$-orbit contradicting our assumption.

  The claim implies that $\cP^0$, being connected, lies entirely in
  one of the open halfspaces determined by $H$. Hence $\cP$, its
  closure, lies entirely in one of the two closed halfspaces
  determined by $H$.

  This reasoning applies, in particular, to all reflections contained
  in $W(x)$. Thus, $\cP$ is contained in a unique Weyl chamber
  $C(x)\subseteq\fa$ for $W(x)$. This chamber determines in turn a set
  $S(x)\subset W(x)$ of simple reflections. It is well-known that for
  any $y\in C(x)$ the set $S(x)_y:=\{s\in S(x)\mid sy=y\}$ is a set of
  simple reflections for $W(x)_y$. Therefore equation \eqref{eq:e1}
  implies that
  \[
    S(x)_y=S(y)_x\quad\text{for all $x,y\in\cP$}.
  \]
  Now let $S$ be the union of all $S(x)$, $x\in\cP$. Then
  \[\label{eq:Sx}
    S(x)=\{s\in S\mid sx=x\}
  \]
  for all $x\in\cP$. Indeed, let $s\in S$ with $sx=x$. Then
  $s\in S(y)$ for some $y\in\cP$. Thus,
  $s\in S(y)_x=S(x)_y\subseteq S(x)$.

  For each $s\in S$ choose affine linear functions $\alpha_s$ with
  $s=s_{\alpha_s}$ and such that $\alpha_s\ge0$ on $\cP$. We are going
  to show that $\{\alpha_s\mid s\in S\}$ satisfies the assumptions of
  Lemma \ref{L1}.

  Let $s_1\ne s_2\in S$. Put $\alpha_i:=\alpha_{s_i}$ and
  $H_i:=\{\alpha_i=0\}$. Assume first that $H_1$ and $H_2$ are
  parallel. Then $\Valpha_1=c\Valpha_2$ with $c\ne0$ and we have to
  show that $c<0$. The functions $\alpha_i$ vanish, by construction,
  at some points $x_i\in\cP$. Put $t:=x_1-x_2\in\Vfa$. Then
  $\<\Valpha_1,t\>=-\alpha_1(x_2)<0$ and
  $\<\Valpha_2,t\>=\alpha_2(x_1)>0$ which shows $c<0$.

  Now assume that $H_1$ and $H_2$ are not parallel. Then
  $E:=H_1\cap H_2$ is a subspace of codimension two. Let
  $W'\subseteq W$ be the dihedral group generated by $s_1$ and $s_2$
  and let $\theta$ be the angle between $\Valpha_1$ and
  $\Valpha_2$. Then $W'$ contains the rotation $r$ around $E$ with
  angle $2\theta$. If $r$ had infinite order then the union of all
  $\<r\>$-translates of, say, $H_1$ would be dense in $\fa$. But that
  contradicts the assumption that every $W\gl$-orbit meets $\cP$ at
  most once. Therefore $W'$ is a finite reflection group.

  Now we claim that $\{s_1,s_2\}$ forms a set of simple reflections
  for $W'$. If $E\cap\cP\ne\leer$ this is clear since then
  $s_1,s_2\in S(x)$ for all $x\in E\cap\cP$ (by
  eqn.~\eqref{eq:Sx}). So assume $E\cap\cP=\leer$. Let $C'$ be the
  unique Weyl chamber of $W'$ which contains $\cP$ and let
  $s_i'\in W'$, $i=1,2$, be the corresponding simple
  reflections. Choose functions $\alpha_i'$ with $s_i'=s_{\alpha_i'}$
  such that $\alpha_i'\ge0$ on $\cP$. Observe that
  \[
    E=\{\alpha_1=\alpha_2=0\}=\{\alpha_1'=\alpha_2'=0\}=\fa^{W'}.
  \]
  Now fix $i\in\{1,2\}$. Then $\alpha_i=c_1\alpha_1'+c_2\alpha_2'$ for
  some real numbers $c_1,c_2\ge0$. Suppose $c_1,c_2>0$, i.e., $s_i$ is
  not simple. By construction $s_i\in W(x)$ for some $x\in\cP$. Then
  \[
    0=\alpha_i(x)=c_1\alpha_1'(x)+c_2\alpha_2'(x)
  \]
  implies $\alpha_1'(x)=\alpha_2'(x)=0$ and therefore $x\in\cP\cap E$
  which is excluded.

  The fact that $s_1$ and $s_2$ are simple reflections of $W'$ implies
  that the angle of $\Valpha_1$ and $\Valpha_2$ is of the form
  $\pi-\frac\pi\ell$ with $\ell\in\ZZ_{\ge2}\cup\{\infty\}$.  Since
  condition \ref{L1i2} is obvious from $\cP^0\subseteq\cA$ we can
  apply \cref{L1} and infer that $W\gl$ is an Euclidean reflection
  group with alcove $\cA\supseteq\cP$ and that $S$ is a set of simple
  reflections of $W$. Finally, \eqref{eq:Sx} implies
  \[
    W_x=\langle s\in S\mid sx=x\rangle=\langle S(x)\rangle=W(x).
  \]
  for all $x\in\cP$. The last assertion holds by construction.
\end{proof}

Applying this to multiplicity free manifolds we get:

\begin{corollary}\label{cor:globalWeyl}

  Let $\Phi(*)$ be the local system of roots of a convex \mf\
  $K\tau$-manifold (as in \cref{thm:localroots}). Let $W(*)$ be the
  corresponding local system of reflection groups. Then there is an
  Euclidean reflection group $W_M\subseteq M(\fa_M)$ such that
  $W(x)=(W_M)_x$ for all $x\in\cP_M$.

\end{corollary}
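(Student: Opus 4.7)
The plan is to apply \cref{L2} to the data $\cP_M \subseteq \fa_M$ together with the local system of reflection groups $W(*)$ obtained by taking the Weyl group of each $\Phi(x)$ furnished by \cref{thm:localroots}. Three things must be verified: that $W(*)$ is a local system of reflection groups \emph{on $\fa_M$}, that $\cP_M \subseteq \fa_M$ is convex with non-empty interior, and that every orbit of the globally generated group $W_{gl}$ meets $\cP_M$ at most once.

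First I would descend everything to the affine subspace $\fa_M$. For each $\alpha \in \Phi(x)$, either $\alpha$ vanishes identically on $\fa_M$, in which case $s_\alpha$ acts trivially on $\fa_M$, or $\alpha|_{\fa_M}$ is a non-constant affine function vanishing at $x$. In the latter case the sign-consistency axiom of a local system of roots combined with the fact that $\cP_M$ has non-empty interior in $\fa_M$ (the corollary following the locally polyhedral lemma) forces $H_\alpha \cap \fa_M$ to be a genuine supporting hyperplane of $\cP_M$ in $\fa_M$, and a short computation shows $\Valpha^\vee \in \Vfa_M$, so $s_\alpha$ preserves $\fa_M$. This yields a well-defined local system of reflection groups on $\cP_M \subseteq \fa_M$; the two axioms follow from the corresponding axioms for the local system of roots $\Phi(*)$.

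Next I would check the remaining hypotheses of \cref{L2}: convexity of $\cP_M$ is built into the definition of a convex manifold, and non-emptiness of the interior $\cP_M^0$ in $\fa_M$ is exactly the corollary cited above. The main obstacle is the orbit-separation hypothesis: no two distinct points of $\cP_M$ lie in the same $W_{gl}$-orbit. The natural approach is to show $W_{gl} \subseteq W_{\Phi_\tau}$, where $W_{\Phi_\tau}$ is the affine Weyl group from \cref{thm:conjugacyclasses}, and then invoke the fact that $\cA$ is a fundamental domain for $W_{\Phi_\tau}$ together with $\cP_M \subseteq \cA$. The containment $W(x) \subseteq W_{\Phi_\tau}$ at each point should be extracted by tracing the construction of $\Phi(x)$ in the proof of \cref{thm:localroots} through the cross-section equivalence of \cref{thm:cross}: the local roots arise from the Hamiltonian model $M_0 = \log M_U$, and by \cite{KnopAuto}*{Thm.~9.2} their reflections act on $\fa$ through hyperplanes already present in the affine arrangement of $\Phi_\tau$.

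Once these three inputs are in place, \cref{L2} produces an Euclidean reflection group $W_M := W_{gl} \subseteq M(\fa_M)$ with $W(x) = (W_M)_x$ for every $x \in \cP_M$, which is the claim. I expect the orbit-separation step to be the only real difficulty; the descent to $\fa_M$ and the verification of the first two hypotheses are routine consequences of previously established structure.
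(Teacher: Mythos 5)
Your overall structure matches the paper: apply \cref{L2} and isolate the orbit-separation hypothesis as the main obstacle. But your resolution of that obstacle has a domain mismatch that needs repair, and your descent discussion is aimed at a non-issue.

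On the descent: the root systems $\Phi(x)$ from \cref{thm:localroots} already live on $\fa_M$, not on the ambient Cartan subspace $\fa$. This is how \cite{KnopAuto}*{Thm.~9.2} hands them to you --- the automorphism sheaf takes values in the torus $A_M=\Vfa_M/\Lambda_M^\vee$, so the gradients $\Valpha$ are in $\Vfa_M$ and the reflection hyperplanes are hyperplanes of $\fa_M$. There is nothing to restrict from $\fa$ to $\fa_M$, so the first half of your argument is moot (though harmless).

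On orbit separation, writing ``$W\gl\subseteq W_{\Phi_\tau}$'' does not typecheck: $W\gl$ is a group of motions of $\fa_M$, while $W_{\Phi_\tau}$ is a group of motions of $\fa$. The roots of $\Phi(x)$ are in general \emph{not} restrictions of roots of $\Phi_\tau$ (in the examples they are sums of simple roots, e.g.\ $\sigma_i=\alpha_i+\alpha_{n+i}$), so the corresponding reflections of $\fa_M$ do not extend canonically to elements of $W_{\Phi_\tau}$. What the paper actually proves (citing \cite{KnopAuto}*{Thm.~3.2}, not Thm.~9.2, which is only the source of the local roots) is the inclusion $W(x)\subseteq\tilde W:=N/C$, where $N$ and $C$ are the normalizer and centralizer of $\fa_M$ inside $W_{\Phi_\tau}$, acting on $\fa_M$. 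Once you have this, your intended use of the fundamental domain property does go through: if $w\in N$ and $x,y\in\cP_M\subseteq\cA\cap\fa_M$ satisfy $wx=y$ in $\fa_M$, then $x$ and $y$ lie in the same $W_{\Phi_\tau}$-orbit in $\fa$ and both lie in the fundamental domain $\cA$, so $x=y$; hence each $\tilde W$-orbit, and a fortiori each $W\gl$-orbit, meets $\cP_M$ at most once. So the idea is right, but you need the quotient group $N/C$ in place of $W_{\Phi_\tau}$ itself, and the correct input from \cite{KnopAuto} for the containment.
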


\begin{proof}

  This follows from \cref{L2} as soon as we have shown that each
  $W\gl$-orbit meets $\cP_M$ in at most one point. To this end recall
  that the Weyl group $W_\tau$ of the root system $\Phi_\tau$ (see
  \cref{thm:conjugacyclasses} acts on the Cartan subspace $\fa$ as an
  Euclidean reflection group. Let $N$ and $C$ be the normalizer and
  centralizer, respectively, of $\fa_M\subseteq\fa$. Then
  $\tilde W:=N/C$ acts properly on $\fa_M$ and each $\tilde W$-meets
  $\fa_M\cap\cP$ at most once. Now the claim follows from the fact
  that each local Weyl group $W(x)$, hence $W\gl$, is a subgroup of
  $\tilde W$ (see, e.g., \cite{KnopAuto}*{Thm.~3.2}).
\end{proof}

\section{The global root system}\label{sec:globalroot}

In this section, we complete the proof that the local system of roots
attached to a multiplicity free manifold is trivial. We already know
that there is a global Weyl group and a weight lattice. So for the
root system there are only finitely many possibilities which we are
first going to investigate.

More abstractly, let $W\subseteq M(\fa)$ be an Euclidean reflection
group, $\Lambda\subseteq\Vfa$ a weight lattice for $W$ (see
\cref{def:weight}), and $\cA\subseteq\fa$ an alcove. Let
$S\subseteq W$ be the set of simple reflections with respect to $\cA$.

For $s\in S$ let
\[
  \Lambda^{\pm s}:=\{\chi\in\Lambda\mid s(\chi)=\pm\chi\}.
\]
Since $\Lambda^{-s}\cong\ZZ$ there is a unique affine function
$\alpha_s\prim$ on $\fa$ with $\fa^{\<s\>}=\{\alpha_s\prim=0\}$, which
is non-negative on $\cA$ and such that $\Valpha_s\prim$ is a generator
of $\Lambda^{-s}$. If $\alpha_s$ is a root for $s$ in some root system
then necessarily $\alpha_s=\alpha_s\prim$ or
$\alpha_s=2\alpha_s\prim$. In the second case
$\Valpha_s^\vee=\half(\Valpha_s\prim)^\vee$ which therefore can only
occur if $\<\Lambda,(\Valpha_s\prim)^\vee\>=2\ZZ$. There is another
way to put this: $\<\Valpha_s\prim,\Valpha_s^\vee\>=1$ implies that
$\Lambda^{-s}$ is an $\<s\>$-equivariant direct summand of $\Lambda$.

\begin{definition}

  \emph{a)} A simple reflection $s\in S$ is called {\it ambiguous} if
  $ \<\Lambda,(\Valpha_s\prim)^\vee\>=2\ZZ$ or, equivalently, if
  $\Lambda=\Lambda^{+s}\oplus\Lambda^{-s}$. The set of ambiguous
  simple reflections is denoted by $\Samb\subseteq S$.

  \emph{b)} Let $(\Phi,\Lambda)$ be an integral root system with Weyl
  group $W_\Phi=W$. Then
  \[
    \Samb(\Phi):=\{s\in S\mid\alpha_s=2\alpha\prim_s\}\subseteq \Samb.
  \]

\end{definition}

\begin{remarks}

  \emph{a)} The notion of ambiguity depends on the lattice $W$. Let,
  e.g., $W$ be a Weyl group of type $\sB_n$ and let $s$ be the
  reflection corresponding to the short simple root. Then $s$ is
  ambiguous with respect to the roots lattice but not with respect to
  the weight lattice.

  \emph{b)} Some simple roots are never ambiguous. Assume for example
  that there is a simple root $s'\in S$ with $(ss')^3=1$. This means
  that $s$ and $s'$ are joined by a simple edge in the Coxeter diagram
  of $W$. Then
  \[
    \<\Valpha_{s'}\prim,(\Valpha_s\prim)^\vee\>=-1
  \]
  implies that $s$ is not ambiguous.

\end{remarks}

Now we classify all root systems $\Phi$ with a given weight lattice
$\Lambda$ and Weyl group $W$.

\begin{lemma}\label{L3}

  Let $W\subseteq M(\fa)$ be an Euclidean reflection group, let
  $\cA\subseteq\fa$ be an alcove of $W$, let $S\subseteq W$ be the
  corresponding set of simple reflections, and let
  $\Lambda\subseteq\Vfa$ be a weight lattice. Then:

  \begin{enumerate}

  \item\label{L3i1} No two distinct elements of $\Samb$ are conjugate
    within $W$.

  \item\label{L3i2} The map $\Phi\mapsto \Samb(\Phi)\subseteq\Samb$ is
    a bijection between root systems $\Phi$ with Weyl group $W$ and
    weight lattice $\Lambda$, and subsets of $\Samb$.

  \end{enumerate}

\end{lemma}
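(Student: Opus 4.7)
The plan for part \ref{L3i1} is to invoke the classical Coxeter-theoretic fact that two simple reflections $s,t\in S$ are $W$-conjugate if and only if they are connected by a chain $s=s_0,\ldots,s_n=t$ in $S$ with each product $s_{i-1}s_i$ of odd order. Since $\Lambda$ supplies the crystallographic integrality required by a root system with Weyl group $W$, the labels $m(s,t)$ of the Coxeter graph of $W$ are constrained to $\{2,3,4,6,\infty\}$, so the only odd value is $3$. The second remark preceding the lemma shows that if $s\in\Samb$ and some $s'\in S$ satisfies $(ss')^3=1$, then $\<\Valpha_{s'}\prim,(\Valpha_s\prim)^\vee\>=-1\notin 2\ZZ$, contradicting the ambiguity of $s$. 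Hence every $s\in\Samb$ is isolated in the odd Coxeter subgraph and therefore forms its own $W$-conjugacy class in $S$.

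For part \ref{L3i2}, I would first show that a root system $\Phi$ with Weyl group $W$ and weight lattice $\Lambda$ is determined by its simple roots $\{\alpha_s\mid s\in S\}$ with respect to $\cA$, and that each $\alpha_s$ is forced to be either $\alpha_s\prim$ or $2\alpha_s\prim$. Since $\Valpha_s\prim$ generates the rank-one lattice $\Lambda^{-s}$, the root $\Valpha_s$ must be a positive integer multiple $k\Valpha_s\prim$; the condition $\Valpha_s^\vee=\tfrac1k(\Valpha_s\prim)^\vee\in\Lambda^\vee$, paired with $\Valpha_s\prim\in\Lambda$, then forces $2/k\in\ZZ$, so $k\in\{1,2\}$, with $k=2$ allowed exactly when $\<\Lambda,(\Valpha_s\prim)^\vee\>\subseteq 2\ZZ$, i.e.\ when $s\in\Samb$.

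Next I would use $W$-stability of $\Phi$, which forces $k_s$ to be constant on each $W$-conjugacy class of simple reflections. Ambiguity itself is a conjugation-invariant property, since conjugating by $w$ sends $\Lambda^{-s}$ to $\Lambda^{-wsw^{-1}}$ and preserves the relevant coroot pairing with $\Lambda$, so no element of $\Samb$ is $W$-conjugate to an element of $S\setminus\Samb$. Combined with part \ref{L3i1}, every $s\in\Samb$ is a singleton conjugacy class in all of $S$, and hence $k_s\in\{1,2\}$ may be chosen freely there, while $k_s=1$ is forced for $s\in S\setminus\Samb$. This produces the bijection $\Phi\leftrightarrow\Samb(\Phi)$.

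The final step is to verify that every such choice really does yield a reduced affine root system: $W$-stability is automatic from the orbit construction, crystallographic integrality follows from the two cases above, and reducedness $\RR\alpha\cap\Phi=\{\pm\alpha\}$ reduces, via part \ref{L3i1}, to checking that when $k_s=2$ no $W$-translate of another simple root equals $\pm\alpha_s\prim$---which would require a distinct simple reflection $W$-conjugate to $s$, forbidden by \ref{L3i1}. The main obstacle I expect is cleanly importing the odd-chain characterization of conjugate simple reflections into the setting of Euclidean reflection groups with weight lattice, together with the sign and orbit bookkeeping needed for reducedness; once \ref{L3i1} is secured, \ref{L3i2} reduces essentially to bookkeeping with the lattice constraints.
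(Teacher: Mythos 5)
Your proof is correct and follows essentially the same route as the paper: part \ref{L3i1} via the odd-chain characterization of conjugate simple reflections combined with the observation that a degree-$3$ neighbor rules out ambiguity, and part \ref{L3i2} via the explicit inverse assignment $I\mapsto\Phi_I=WS_I$ with the length constraint $k_s\in\{1,2\}$ from the lattice. You spell out in more detail the lattice computation forcing $k_s\in\{1,2\}$, the $W$-conjugation-invariance of ambiguity, and the reducedness check, all of which the paper compresses into "part \ref{L3i1} shows that $\Phi_I$ is a reduced root system," but the underlying argument is the same.
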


\begin{proof}

  \ref{L3i1} It is well-known (see e.g. \cite{Bou} IV, \S1, Prop. 3)
  that two simple reflections $s,s'$ in a Coxeter group are conjugate
  if and only if there is a string of simple reflections
  $s=s_1,s_2,\ldots,s_n=s'$ such that the order of $s_is_{i+1}$ is odd
  for all $i=1,\ldots,n-1$. For Weyl groups this happens only if the
  order is $3$. But then, by the Remark~\emph{b)} above, neither $s$
  nor $s'$ is ambiguous.

  \ref{L3i2} We construct the inverse mapping. For $I\subseteq \Samb$
  let
  \[
    \alpha_s^I:=\begin{cases} 2\alpha\prim_s&\text{if }s\in
      I\\\alpha\prim_s&\text{if }s\in S\setminus I
    \end{cases}
  \]
  and $S_I:=\{\alpha_s^I\mid s\in S\}$. Then part \ref{L3i1} shows
  that $\Phi_I=WS_I$ is a {\it reduced} root system with simple roots
  $S_I$ and $\Samb(\Phi_I)=I$.
\end{proof}


Next, we determine the set $\Samb$ of ambiguous roots.

\begin{proposition}\label{P2}

  Let $W$, $\cA$, $S$, and $\Lambda$ be as in \cref{L3}.

  \begin{enumerate}

  \item\label{P2i1} Reduction to irreducible reflection groups: Let
    $S_0\subseteq S$ be a connected component of the Coxeter diagram
    of $S$ containing an ambiguous root. Let
    $\Vfa_0:=\<\Valpha_s\prim\in\Vfa\mid s\in S_0\>_\RR$ and
    $\Lambda_0:=\Lambda\cap\Vfa_0$. Then $\Lambda_0$ is a
    $W$-equivariant direct summand of $\Lambda$.

  \item\label{P2i2} Assume $(W,S)$ to be irreducible with
    $\Samb\ne\leer$. Then $\Lambda=\<\Valpha\prim_s\mid s\in
    S\>_\ZZ$. In particular, $(W,\Lambda)$ is determined by the root
    system $\Phi_\leer$ (see proof of \cref{L3} for the notation).

  \item\label{P2i3} The irreducible root systems which are of the form
    $\Phi_\leer$ are listed in the following table. The set $\Samb$ is
    marked by asterisks.

  \end{enumerate}

  \def\scalar{0.9}

  \setlength{\unitlength}{1mm}
  \begin{equation*}
    \begin{array}{ll|l}
      \Phi_\leer&&\text{Diagram}\\
      \hline
      \sA_1&&
              \begin{tikzpicture}[scale=1]
                \Ddot(1,0;*)
              \end{tikzpicture}
      \\
      \sB_n&(n\ge2)&
                     \begin{tikzpicture}[scale=1]
                       \Ddot(1,0;) \Ddot(2,0;) \Ddot(4,0;) \Ddot(5,0;)
                       \Ddot(6,0;*) \Dline(1,0) \Ddots(2,0)
                       \Dline(4,0) \Drarrow(5,0)
                     \end{tikzpicture}
      \\
      \sA_1^{(1)}&&
                    \begin{tikzpicture}[scale=1]
                      \Ddot(1,0;*) \Ddot(2,0;*) \Dlrarrow(1,0)
                    \end{tikzpicture}
      \\
      \sB_2^{(1)}&&
                    \begin{tikzpicture}[scale=1]
                      \Ddot(1,0;) \Ddot(2,0;*) \Ddot(3,0;)
                      \Drarrow(1,0) \Dlarrow(3,0)
                    \end{tikzpicture}
      \\
      \sB_n^{(1)}&(n\ge3)&
                           \vcenter{\hbox{%
                           \begin{tikzpicture}[scale=1]
                             \Ddot(1,0.5;) \Ddot(1,-0.5;) \Ddot(2,0;)
                             \Ddot(3,0;) \Ddot(5,0;) \Ddot(6,0;)
                             \Ddot(7,0;*)
                             \draw[thick](2,0)--+(-1,0.5);
                             \draw[thick](2,0)--+(-1,-0.5);
                             \Dline(2,0) \Ddots(3,0) \Dline(5,0)
                             \Drarrow(6,0)
                           \end{tikzpicture}
                           }}
      \\
      \vrule width0pt height25pt\sD_{n+1}^{(2)}&(n\ge2)&
                                                         \begin{tikzpicture}[scale=1]
                                                           \Ddot(1,0;*)
                                                           \Ddot(2,0;)
                                                           \Ddot(3,0;)
                                                           \Ddot(5,0;)
                                                           \Ddot(6,0;)
                                                           \Ddot(7,0;*)
                                                           \Dlarrow(2,0)
                                                           \Dline(2,0)
                                                           \Ddots(3,0)
                                                           \Dline(5,0)
                                                           \Drarrow(6,0)
                                                         \end{tikzpicture}
      \\
    \end{array}
  \end{equation*}

\end{proposition}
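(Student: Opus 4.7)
The strategy is to prove \emph{b)} first by case analysis on the irreducible Coxeter type, then deduce \emph{a)} from \emph{b)} via an orthogonal projection, and finally read off \emph{c)} from that same case analysis.

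For \emph{b)}: Since each $\Valpha_s\prim$ generates $\Lambda^{-s}\subseteq\Lambda$, the sublattice $Q_\leer:=\<\Valpha_s\prim\mid s\in S\>_\ZZ$ is contained in $\Lambda$. Conversely, $\Valpha_s\prim$ equals $\Valpha_s$ or $\half\Valpha_s$, so $\<\Lambda,(\Valpha_s\prim)^\vee\>\subseteq\ZZ$ for all $s\in S$, placing $\Lambda$ inside the weight lattice $P_\leer$ of $\Phi_\leer$. Hence $\Lambda$ is one of finitely many intermediate lattices, and the task is to show that the ambiguity condition $\<\Lambda,(\Valpha_{s_0}\prim)^\vee\>\subseteq 2\ZZ$ for some $s_0\in\Samb$ forces $\Lambda=Q_\leer$. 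By the remark following the definition of $\Samb$, an ambiguous reflection cannot have any order-$3$ neighbor in the Coxeter diagram, which excludes every irreducible Coxeter type whose every vertex has an order-$3$ neighbor. For each surviving irreducible finite or affine Coxeter type, a direct enumeration of the intermediate lattices between $Q_\leer$ and $P_\leer$ and of the reflections ambiguous for each shows that $\Samb\ne\leer$ indeed forces $\Lambda=Q_\leer$, and simultaneously produces the six families listed in \emph{c)}.

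For \emph{a)}: Fix an ambiguous $s_0\in S_0$. Since $S_0$ is a connected component of the Coxeter diagram, the simple reflections outside $S_0$ commute with those inside, so the orthogonal decomposition $\Vfa=\Vfa_0\oplus\Vfa_0^\perp$ is $W$-stable: $W_0:=\<s\mid s\in S_0\>$ fixes $\Vfa_0^\perp$ pointwise while $\<s\mid s\in S\setminus S_0\>$ fixes $\Vfa_0$ pointwise. Let $\pi_0:\Vfa\to\Vfa_0$ be the orthogonal projection; it is $W$-equivariant, so $\pi_0(\Lambda)$ is a $W_0$-stable lattice in $\Vfa_0$ containing $Q_0:=\<\Valpha_s\prim\mid s\in S_0\>_\ZZ$. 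Because $(\Valpha_s\prim)^\vee$ lies in $\Vfa_0$ for every $s\in S_0$, the identity $\<\pi_0(\chi),(\Valpha_s\prim)^\vee\>=\<\chi,(\Valpha_s\prim)^\vee\>$ shows that $\pi_0(\Lambda)$ is a weight lattice for the sub-root system $\Phi_\leer|_{S_0}$ and that the ambiguity of $s_0$ is inherited by $(W_0,\pi_0(\Lambda))$. Applying \emph{b)} to this irreducible pair gives $\pi_0(\Lambda)=Q_0$; sandwiching $Q_0\subseteq\Lambda_0\subseteq\pi_0(\Lambda)=Q_0$ yields $\Lambda_0=\pi_0(\Lambda)$. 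Hence $\pi_0(\chi)\in\Lambda$ for every $\chi\in\Lambda$, so $\chi-\pi_0(\chi)\in\Lambda\cap\Vfa_0^\perp$, producing the $W$-equivariant decomposition $\Lambda=\Lambda_0\oplus(\Lambda\cap\Vfa_0^\perp)$.

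For \emph{c)} and the main obstacle: The table is simply the output of the case analysis in \emph{b)}, with an asterisk on each simple reflection $s$ satisfying $\<Q_\leer,(\Valpha_s\prim)^\vee\>\subseteq 2\ZZ$. The main obstacle is precisely this case analysis: although any single lattice computation is routine --- for instance in $\sC_n$ the identity $\<e_{n-1}-e_n,e_n\>=-1$ rules out ambiguity of the long end reflection, so $\sC_n$ drops off the list --- systematically dispatching every surviving irreducible finite or affine Coxeter type is the labor-intensive heart of the argument. Once \emph{b)} is in hand, parts \emph{a)} and \emph{c)} follow almost formally.
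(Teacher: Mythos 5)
Your overall strategy matches the paper's exactly: prove \emph{b)} first, deduce \emph{a)} by orthogonally projecting $\Lambda$ to $\Vfa_0$, and read \emph{c)} off the same classification. Two points deserve scrutiny, though.

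For \emph{b)}, you reduce to the sandwich $Q_\leer\subseteq\Lambda\subseteq P_\leer$ and then defer to ``a direct enumeration of the intermediate lattices.'' This is where the paper supplies the idea that makes the case analysis collapse: after using the classification of affine Dynkin types (exactly as you do, via the no--order-$3$-neighbor observation) to restrict to the short list, it picks an ambiguous $s$, forms $\gamma:=\half\Valpha_s^\vee\in\Lambda^\vee$, and uses the tighter sandwich $\langle W\Valpha_s\rangle\subseteq\Lambda\subseteq\langle W\gamma\rangle^\vee$. The key observation --- that in each listed type $W\Valpha_s$ is, up to signs, an orthogonal basis $\{\epsilon_i\}$ of equal-length vectors with $\langle\Valpha_s,\gamma\rangle=1$ --- forces the two outer lattices to coincide, so $\Lambda$ is pinned immediately. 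This avoids the self-referential aspect of your enumeration scheme: since $Q_\leer$ and $P_\leer$ are defined via the primitive vectors $\Valpha_s\prim$ of $\Lambda$ itself, ``enumerate the intermediate lattices'' is not actually a single finite list over a fixed pair $(Q_\leer,P_\leer)$, and making it so requires essentially recreating the paper's computation. Your sketch is salvageable, but the orthogonal-basis fact is the missing ingredient that makes it tractable.

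For \emph{a)}, there is a genuine (if small) gap. You apply \emph{b)} to the pair $(W_0,\pi_0(\Lambda))$ to conclude $\pi_0(\Lambda)=Q_0$, but the conclusion of \emph{b)} gives $\pi_0(\Lambda)=\langle\Valpha_s\prim(\pi_0(\Lambda))\mid s\in S_0\rangle_\ZZ$, where the primitive vectors are taken with respect to $\pi_0(\Lambda)$, not $\Lambda$. You verify that $\Valpha_{s_0}\prim$ is unchanged (correctly: the identity $\langle\pi_0(\chi),(\Valpha_{s_0}\prim)^\vee\rangle=\langle\chi,(\Valpha_{s_0}\prim)^\vee\rangle$ forces the index to be $1$), but you do not check this for the other $s\in S_0$, where a priori $\Lambda^{-s}$ could be a proper sublattice of $\pi_0(\Lambda)^{-s}$. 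The cleanest fix is the one the paper uses: do not invoke the \emph{statement} of \emph{b)} for $\pi_0(\Lambda)$, but rather its \emph{proof}, i.e.\ note that the inclusions $\langle W_0\Valpha_{s_0}\prim\rangle\subseteq\Lambda_0\subseteq\pi_0(\Lambda)\subseteq\langle W_0\gamma\rangle^\vee$ all hold (the first since $\Lambda_0$ is $W_0$-stable and contains $\Valpha_{s_0}\prim$, the last because $\gamma\in\Lambda^\vee$ and $\gamma\in\Vfa_0$), and the outer lattices coincide by the computation in \emph{b)}; hence $\Lambda_0=\pi_0(\Lambda)$ and the splitting follows. With that patch, your argument for \emph{a)} becomes the paper's.

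For \emph{c)}, the one further point the paper makes explicit is why $\sA_{2n}^{(2)}$ drops off the list: its marked vertex $s_\beta$ already satisfies $\alpha_{s_\beta}=2\alpha_{s_\beta}\prim$, so $s_\beta\in\Samb(\Phi)$ and $\Phi\ne\Phi_\leer$. Your ``read off from the case analysis'' covers this only implicitly.
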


\begin{proof}

  We first prove \ref{P2i2}. Assume therefore that $(W,S)$ is
  irreducible and that $\Phi=\Phi_\leer$ for some choice of
  $\Lambda$. Let $s\in \Samb$. Then $\<\beta,\Valpha_s^\vee\>$ must be
  even for all $\beta\in S$. The classification of (affine) root
  systems (see e.g. \cite{Kac} or \cite{Mac2}) shows that the Dynkin
  diagram of $\Phi$ is one of the items in the table above or is one
  of the following
  \[\label{eq:A2n1}
    \begin{array}{ll|l}
      \sA_2^{(2)}&&
                    \begin{tikzpicture}[scale=1]
                      \draw(1,0)
                      node[above]{$\beta$}; \Ddot(1,0;) \Ddot(2,0;*)
                      \Drarrow(1,0) \draw[thick] (1,3pt)--+(0.7,0);
                      \draw[thick] (1,-3pt)--+(0.7,0);
                    \end{tikzpicture}\\
      \sA_{2n}^{(2)}&(n\ge2)&
                              \begin{tikzpicture}[scale=1]\
                                \draw(1,0) node[above]{$\beta$};
                                \Ddot(1,0;)
                                \Ddot(2,0;)
                                \Ddot(3,0;)
                                \Ddot(5,0;)
                                \Ddot(6,0;)
                                \Ddot(7,0;*)
                                \Drarrow(1,0)
                                \Dline(2,0)
                                \Ddots(3,0)
                                \Dline(5,0)
                                \Drarrow(6,0)
                              \end{tikzpicture}\\
    \end{array}
  \]
  Moreover, $s$ corresponds to one of vertices marked by an asterisk.
  Now from $\Valpha_s\in\Lambda$ and
  $\gamma:=\frac12\Valpha_s^\vee\in\Lambda^\vee$ we get
  \[\label{e6}
    \<W\Valpha_s\>\subseteq\Lambda\subseteq \<W\gamma\>^\vee.
  \]
  It is easy to verify case-by-case that $W\Valpha_s$ equals either
  $\{\epsilon_1,\ldots,\epsilon_n\}$ or
  $\{\pm\epsilon_1,\ldots,\pm\epsilon_n\}$ where
  $\epsilon_1,\ldots,\epsilon_n$ is an orthogonal basis of $\Vfa$ all
  whose elements have the same length. This and
  $\<\Valpha_s,\gamma\>=1$ immediately imply that the outer lattices
  of \eqref{e6} coincide. Hence, $\Lambda$ equals the root lattice of
  $\Phi$ proving \ref{P2i2}.
  
  From this also \ref{P2i1} follows: Let $\tilde\Lambda_0$ be the
  orthogonal projection of $\Lambda$ to $\Valpha_0$. Then the
  inclusions \ref{e6} hold for both $\Lambda_0$ and
  $\tilde\Lambda_0$. So $\Lambda_0=\tilde\Lambda_0$ which implies that
  $\Lambda_0$ is a $W$-direct summand of $\Lambda$.

  Finally, for \ref{P2i3}, observe that the root systems
  $\sA_{2n}^{(2)}\ (n\ge1)$ cannot be of the form $\Phi_\leer$ since
  $s_\beta\in \Samb(\Phi)$.
\end{proof}

Next, we prove a refinement of \cref{L2} to root systems.

\begin{proposition}\label{P1}

  Let $\cP\subseteq\fa$ be a convex subset with non-empty interior and
  let $(\Phi(*),\Lambda)$ be an integral local system of roots on
  $\cP$. Let $W$ be the group generated by all the local Weyl groups
  $W(x)$, $x\in\cP$, and assume that every $W$-orbit meets $\cP$ at
  most once. Then there is an integral root system $(\Phi,\Lambda)$
  with $\Phi(x)=\Phi_x$ for all $x\in\cP$.

\end{proposition}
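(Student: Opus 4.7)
The plan is to first apply \cref{L2} to obtain a global Euclidean reflection group $W\subseteq M(\fa)$ with a unique alcove $\cA\supseteq\cP$, satisfying $W(x)=W_x$ for every $x\in\cP$, and such that $H_s\cap\cP\ne\leer$ for every wall $H_s$ of $\cA$. Let $S$ denote the corresponding set of simple reflections of $W$. The global root system $\Phi$ will be manufactured by choosing, for each $s\in S$, an appropriate root $\alpha_s$ lying over $s$.

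For each $s\in S$ pick a point $x_s\in H_s\cap\cP$. Since $s\in W(x_s)$, there is a unique $\alpha_s\in\Phi(x_s)$ with $s_{\alpha_s}=s$ and $\alpha_s\ge0$ on $\cP$. I first verify that $\alpha_s$ depends only on $s$. If $y\in H_s\cap\cP$ is any other choice, convexity of $\cP$ places the segment $[x_s,y]$ inside $H_s\cap\cP$, and I propagate the root along this segment: for $z,z'\in[x_s,y]$ with $z'$ close to $z$, the coherence axiom~\ref{it:locroot1} gives $\Phi(z')=\Phi(z)_{z'}$, and the root $\gamma_z\in\Phi(z)$ characterised by $s_{\gamma_z}=s$ and $\gamma_z\ge0$ on $\cP$ vanishes at $z'$ (since $z'\in H_s$), hence lies in $\Phi(z')$, where it still has the same characterising properties. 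Therefore $z\mapsto\gamma_z$ is locally constant on the connected segment and hence constant.

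Next I set $\Phi:=W\cdot\{\alpha_s\mid s\in S\}$. Because $s_{\alpha_s}=s$, each $\alpha_s$ is either $\alpha'_s$ or $2\alpha'_s$, and the doubling case forces $\Valpha_s^\vee=\tfrac12(\Valpha'_s)^\vee\in\Lambda^\vee$, i.e., $s\in\Samb$. This is precisely the data entering the construction in \cref{L3}, part~\ref{L3i2}, which then produces a reduced root system $\Phi$ with Weyl group $W$; integrality of $(\Phi,\Lambda)$ is inherited from the local integrality of each $(\Phi(x_s),\Lambda)$.

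Finally, I check $\Phi_x=\Phi(x)$ for every $x\in\cP$. Both have Weyl group $W_x=W(x)$, which, since $x\in\cA$, is generated by $S_x:=\{s\in S\mid x\in H_s\}$. For each $s\in S_x$, the well-definedness step above yields $\alpha_s\in\Phi(x)$, and $\alpha_s$ is non-negative on $\cP$; hence $\alpha_s$ is the simple root of $\Phi(x)$ attached to $s$ relative to the chamber containing $\cP$. By construction it is also the simple root of $\Phi_x$ attached to $s$. Since each of the two finite root systems is determined by its Weyl group together with its simple roots for the fixed chamber, they coincide. The main obstacle is the well-definedness of $\alpha_s$: propagating the identity of the root along $H_s\cap\cP$ is where the coherence axiom~\ref{it:locroot1} really earns its keep. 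After that, the rest is a direct application of \cref{L2} for the global group structure and of \cref{L3} for the combinatorics of choosing which doubling to perform at each ambiguous simple reflection.
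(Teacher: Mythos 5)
Your proposal is correct and follows essentially the same route as the paper: both first invoke \cref{L2} to produce the global Euclidean reflection group with $W(x)=W_x$ and the property that $\cP$ meets every wall, both then propagate local root data along line segments inside $\cP$ using the coherence axiom \ref{it:locroot1}, and both finish by invoking \cref{L3}\ref{L3i2} to assemble the global root system from a choice of ``ambiguity data'' at the simple reflections. The only cosmetic difference is that the paper phrases the propagation as establishing $\Phi(x)_y=\Phi(y)_x$ for all $x,y\in\cP$ (mirroring the argument for \eqref{eq:e1}) and then packages the ambiguities as the sets $\Samb(x)$, whereas you propagate the individual root $\alpha_s$ along $H_s\cap\cP$ for each fixed $s$; since any segment joining $x,y\in H_s\cap\cP$ already lies in $H_s\cap\cP$, these are the same computation seen from two angles.
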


\begin{proof}

  \cref{L2} implies that $W$ is an Euclidean reflection group with
  $W(x)=W_x$ for all $x\in\cP$. In particular, $(\Phi(x),\Lambda)$ is
  an integral root system with Weyl group $W_x$. Let $S\subseteq W$ be
  the set of simple reflections. Then $S_x=S\cap W_x$ is a set of
  simple roots for $\Phi(x)$. Moreover, $\Phi(x)$ is a root system
  with Weyl group $W_x$ such that $\Lambda$ is a weight lattice and
  \[
    \Samb(x):=\{s\in S_x\mid
    \Valpha_s\prim\not\in\Phi(x)\}\subseteq\Samb
  \]
  Now the same argument as for \eqref{eq:e1} also shows
  \[\label{eq:phiphi}
    \Phi(x)_y=\Phi(y)_x\text{ for all $x,y\in\cP$}.
  \]
  This implies that whenever $s\in S_x\cap S_y$ then $s\in\Samb(x)$ if
  and only if $\Samb(y)$. Thus, the union
  $S_0=\bigcup_{x\in\cP}\Samb(x)$ has the property that
  $S_0\cap S_x=\Samb(x)$ for all $x$. Therefore the root system $\Phi$
  attached to $S_0\subseteq\Samb$ satisfies $\Phi_x=\Phi(x)$, as
  required.
\end{proof}

Now we can improve on \cref{thm:localroots}:

\begin{theorem}\label{thm:globalroots}

  Let $K$ be a simply connected compact Lie group with twist $\tau$
  and $M$ be a convex \mf\ $K\tau$-manifold.  Then there is a unique
  affine root system $\Phi_M$ on $\fa_M$ such that
  \begin{itemize}

  \item $\Lambda_M$ is a weight lattice for $\Phi_M$.

  \item $\cP_M$ is contained in a (unique) alcove $\cA$ of $\Phi_M$.

  \item $\cP_M$ intersects every wall of $\cA$.

  \item The sheaf of automorphisms $\cAut_M$ is canonically isomorphic
    to $\fL^{\Phi_M}_{\cP_M,\Lambda_M}$.

  \end{itemize}

\end{theorem}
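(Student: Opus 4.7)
The plan is to package together the results of the preceding two sections and then check global coherence. By \cref{thm:localroots} we already have a local system of roots $(\Phi(*),\Lambda_M)$ on $\cP_M$ together with a canonical isomorphism $\cAut_M\cong\fL^{\Phi(*)}_{\cP_M,\Lambda_M}$. The task is to promote $\Phi(*)$ to a genuine (global) affine root system $\Phi_M$ on $\fa_M$ such that $\Phi_M(x)=(\Phi_M)_x$ for every $x\in\cP_M$, and then to upgrade the alcove and sheaf statements.

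First I would restrict everything to $\fa_M$, which is allowed since $\cP_M\subseteq\fa_M$ has non-empty interior there and by construction $\Lambda_M\subseteq\Vfa_M$. Next I would apply \cref{cor:globalWeyl} to the local system of reflection groups $W(*)$ underlying $\Phi(*)$; this produces an Euclidean reflection group $W_M\subseteq M(\fa_M)$ with $W(x)=(W_M)_x$ for all $x\in\cP_M$, and, crucially, shows that every $W_M$-orbit meets $\cP_M$ at most once (via the ambient group $\tilde W=N/C$). This hypothesis is exactly what is needed to apply \cref{P1}: combining it with the integrality of each $(\Phi(x),\Lambda_M)$ yields an integral affine root system $(\Phi_M,\Lambda_M)$ on $\fa_M$ whose Weyl group is $W_M$ and which satisfies $\Phi_M(x)=(\Phi_M)_x$ for every $x\in\cP_M$. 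The alcove claims follow directly from the last sentence of \cref{L2}: the unique alcove $\cA$ of $W_M$ containing $\cP_M$ exists, and $\cP_M$ meets every wall of $\cA$ non-trivially.

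To get the sheaf identification, I would argue that $\fL^{\Phi(*)}_{\cP_M,\Lambda_M}=\fL^{\Phi_M}_{\cP_M,\Lambda_M}$. For any $x\in\cP_M$ the coherence property gives an open neighborhood $U$ on which $\res_U\fL^{\Phi(*)}_{\cP_M,\Lambda_M}=\fL^{\Phi(x)}_{U,\Lambda_M}$, and by the identity $\Phi_M(x)=(\Phi_M)_x=\Phi(x)$ combined with the local reduction $\res_U\fL^{\Phi_M}_{\cP_M,\Lambda_M}=\fL^{(\Phi_M)_x}_{U,\Lambda_M}$ the two sheaves agree on a cover and hence globally. Combined with \cref{thm:localroots} this yields the desired canonical isomorphism $\cAut_M\cong\fL^{\Phi_M}_{\cP_M,\Lambda_M}$.

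Uniqueness of $\Phi_M$ is forced by the properties listed: the Weyl group is pinned down by $W_M=\bigcup_{x\in\cP_M}W(x)$ (hence by $\cAut_M$), the weight lattice is $\Lambda_M$, and by \cref{L3}\ref{L3i2} a root system with prescribed Weyl group and weight lattice is determined by its ambiguous simple roots. The condition $\Phi_M(x)=(\Phi_M)_x$ at every $x\in\cP_M$ determines which simple reflections are ambiguous via $\Samb(\Phi_M)\cap S_x=\Samb(\Phi_M(x))$, so $\Phi_M$ is unique. The only real obstacle is checking that the hypothesis ``every $W_M$-orbit meets $\cP_M$ at most once'' of \cref{P1} is satisfied, but as noted this is exactly the content of the proof of \cref{cor:globalWeyl}. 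The rest of the argument is assembly.
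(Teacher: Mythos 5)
Your proposal is correct and follows essentially the same path as the paper's (very terse) proof, which reads in full: apply \cref{P1} to the local system of roots from \cref{thm:localroots}, noting that the hypothesis of \cref{P1} (every $W$-orbit meets $\cP_M$ at most once) was verified in the proof of \cref{cor:globalWeyl}. You have simply unpacked the assembly the paper leaves implicit — the restriction to $\fa_M$, the alcove claims coming from the last sentence of \cref{L2}, the sheaf identification via $(\Phi_M)_x=\Phi(x)$, and the uniqueness via \cref{L3}\ref{L3i2} — so there is nothing substantive to flag.
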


\begin{proof}

  Apply \cref{P1} to \cref{thm:globalroots}. The condition on $W$ has
  been verified in the proof of \cref{cor:globalWeyl}.
\end{proof}

\section{Cohomology computations}\label{sec:cohomology}

In this section, we provide the last step of the proof of our main
classification \cref{thm:main}.

\begin{theorem}\label{T1}

  Let $(\Phi,\Lambda)$ be an integral root system on the Euclidean
  affine space $\fa$, let $\cA$ be a fixed alcove of $\Phi$, and let
  $\cP\subseteq\cA$ be a locally closed convex subset with non-empty
  interior. Then $H^i(\cP,\fL^\Phi_{\cP,\Lambda})=0$ for all $i\ge1$.

\end{theorem}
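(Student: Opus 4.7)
The plan is to resolve $\fL^\Phi_{\cP,\Lambda}$ by a short exact sequence of sheaves $0\to\fC\to\fS\to\fL^\Phi_{\cP,\Lambda}\to 0$ on $\cP$ in which $\fS$ is a soft sheaf of smooth functions and $\fC$ is a locally constant sheaf. The cohomology vanishing then follows from standard softness arguments together with the contractibility of the convex set $\cP$.

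The first step is to realize closed $A$-valued maps via potentials. On a simply connected open $U\subseteq\cP$, any smooth closed map $\phi:U\to A=\Vfa/\Lambda^\vee$ lifts, via the identification $\Vfa\cong\Vfa^*$ and the Poincaré lemma applied to the resulting closed $1$-form, to a representation $\phi=\exp(\nabla f)$ for some smooth function $f:U\to\RR$, unique modulo affine functions whose gradient lies in $\Lambda^\vee$. The $\Phi$-equivariance of $\phi$ then translates, I would claim, into the condition that $f$ admits a smooth $W_\Phi$-invariant extension to a $W_\Phi$-stable open neighborhood of $\cP$ in $\fa$. Indeed, if $f$ is smooth and $W_\Phi$-invariant then $\nabla f(x)$ is $W_x$-fixed, so $\<\Valpha,\nabla f(x)\>=0\in\ZZ$ for $\alpha\in\Phi$ with $\alpha(x)=0$, which gives the wall condition $\tilde\alpha(\phi(x))=1$ for free; conversely, the ``$+1$ rather than $-1$'' part of $\Phi$-equivariance at ambiguous walls is precisely what removes the integer parity obstruction to refining a $W_\Phi$-equivariant lift of $\phi$ into a $W_\Phi$-invariant smooth potential $f$, invoking the classification of ambiguous roots from \cref{P2}.

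With this dictionary in place, I would define $\fS$ to be the sheaf on $\cP$ of restrictions of smooth $W_\Phi$-invariant functions on $W_\Phi$-stable open neighborhoods, and $\fC\subseteq\fS$ to be the kernel of $f\mapsto\exp(\nabla f)$, which consists of affine functions whose gradient lies in the $W_\Phi$-fixed part of $\Lambda^\vee$. The resulting sequence $0\to\fC\to\fS\to\fL^\Phi_{\cP,\Lambda}\to 0$ is then exact by construction. The sheaf $\fS$ is soft: averaging ordinary smooth bump functions over the finite local Weyl groups $W_x$ produces $W_\Phi$-invariant partitions of unity subordinate to any open cover of $\cP$, so multiplication by such bumps acts on $\fS$ and allows one to extend sections from closed subsets, giving $H^i(\cP,\fS)=0$ for $i\geq 1$. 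The sheaf $\fC$ is locally constant, and since $\cP$ is connected and contractible (being convex with non-empty interior), $\fC$ is actually a constant sheaf whose cohomology vanishes in all positive degrees. The long exact sequence of sheaf cohomology then yields $H^i(\cP,\fL^\Phi_{\cP,\Lambda})=0$ for $i\geq 1$.

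The main obstacle I expect is the careful verification of the translation in the second step: checking that $\Phi$-equivariance of $\phi$ is really equivalent to smooth $W_\Phi$-invariance of some representing potential $f$ across all of $\cP$, particularly at ambiguous walls where the precise ``$+1$ versus $-1$'' distinction must match the freedom to shift $f$ by affine functions with gradients in $\Lambda^\vee$. This is exactly why the classification of ambiguous simple reflections in \cref{P2} together with the Weyl-group analyses of \cref{L2} and \cref{P1} were set up beforehand, and is where the bulk of the technical work should concentrate.
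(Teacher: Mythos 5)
Your strategy—represent elements of $\fL^\Phi_{\cP,\Lambda}$ by potentials $f$ with $\phi=\exp(\nabla f)$, split into a soft sheaf of smooth functions plus a nice kernel, and use contractibility—is precisely the opening move of the paper's proof. But the second step of your outline contains a genuine gap: the map $\epsilon:\cC_\cP^W\to\fL_{\cP,\Lambda}^\Phi$, $f\mapsto\exp(\nabla f)$, is \emph{not} surjective when $\Phi$ is an infinite affine root system. The issue is not the ambiguous-wall ``$+1$ versus $-1$'' question (which only distinguishes $W_0$-equivariance from $\Phi_0$-equivariance of $\phi$), but a component-group obstruction: $\Phi$-equivariance forces $\phi(x)\in A^{\Phi_x}=\{a\mid\tilde\alpha(a)=1,\ \alpha\in\Phi_x\}$, whereas admitting a $W$-invariant potential forces $\phi(x)\in\exp(\Vfa^{W_x})=(A^{\Phi_x})^0$. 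The quotient $\pi_0(A^{\Phi_x})$ can be nontrivial; for infinite irreducible $\Phi$ and adjoint $\Lambda$ it equals $\ZZ/d_I\ZZ$ with $d_I=\gcd\{a_j:\alpha_j(x)\neq0\}$ coming from the labels of the affine Dynkin diagram, which is already nonzero in rank $2$ (e.g.\ type $\sC_2^{(1)}$). A constant section $\phi\equiv u$ with $u\in A^{\Phi_x}\setminus(A^{\Phi_x})^0$ is smooth, closed, and $\Phi$-equivariant, but has no $W$-invariant potential. The paper captures this as a cokernel sheaf $\fC_\cP$ with stalks $\pi_0(A^{\Phi_x})$ and must separately prove $H^i(\cP,\fC_\cP)=0$ for $i\geq1$ via a filtration by constant sheaves on faces; your proof has no counterpart to this entire step.

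There is a secondary error as well: the kernel of $f\mapsto\exp(\nabla f)$ on $\cC_\cP^W$ is not locally constant. On the interior of $\cP$ a section is any affine $f$ with $\nabla f\in\Lambda^\vee$; at a wall point $x$ the $W_x$-invariance condition cuts this down to $\nabla f\in\Lambda^\vee\cap\Vfa^{W_x}$. The stalks jump, so the sheaf is constructible but not locally constant, and the ``constant on a contractible space'' shortcut does not apply. The paper deals with this via a further resolution
\[
0\to\fK_\cP\to\Lambda^\vee_\cP\to\bigoplus_i\ZZ_{H_i\cap\cP}\to\fC_\cP\to0
\]
obtaining only $H^i(\cP,\fK_\cP)=0$ for $i\geq 2$, which together with the softness of $\cC_\cP^W$ and the surjectivity of $H^0(\psi)$ is just enough to conclude. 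Also note that the paper first reduces to the case where $\Lambda$ is of adjoint type (\cref{lem:commensurable}); your argument makes no such reduction but relies implicitly on facts that are lattice-dependent. In short, the skeleton is right but the two nontrivial pieces of the paper's proof—the cokernel computation and the kernel resolution—are missing or replaced by false claims.
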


The proof will occupy the rest of this section. We start with a
reduction step:

\begin{lemma}\label{lem:commensurable}

  Let $\Lambda_1,\Lambda_2\subseteq\Vfa$ be two commensurable weight
  lattices for $\Phi$. Then
  $H^i(\cP,\fL^\Phi_{\cP,\Lambda_1})=H^i(\cP,\fL^\Phi_{\cP,\Lambda_1})$
  for all $i\ge1$

\end{lemma}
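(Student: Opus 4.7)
The plan is to set up a short exact sequence of sheaves on $\cP$ relating $\fL^\Phi_{\cP,\Lambda_1}$ and $\fL^\Phi_{\cP,\Lambda_2}$, and to extract the desired isomorphism from its long exact cohomology sequence.

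Since $\Lambda_1\cap\Lambda_2$ is again a weight lattice for $\Phi$ and is commensurable with both, by transitivity I may reduce to the case $\Lambda_1\subseteq\Lambda_2$ with finite index. Dualizing gives $\Lambda_2^\vee\subseteq\Lambda_1^\vee$, hence a covering of tori $p\colon A_2:=\Vfa/\Lambda_2^\vee\twoheadrightarrow A_1:=\Vfa/\Lambda_1^\vee$ with finite kernel $F=\Lambda_1^\vee/\Lambda_2^\vee$. Post-composition with $p$ induces a morphism of sheaves $\fL^\Phi_{\cP,\Lambda_2}\to\fL^\Phi_{\cP,\Lambda_1}$. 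It is surjective on stalks: locally, a smooth closed $\phi\colon U\to A_1$ lifts to a smooth closed $\tilde\phi\colon U\to A_2$, and the $\Phi$-equivariance transfers automatically since $\Lambda_1\subseteq\Lambda_2$ yields $\tilde\alpha_{A_1}\circ p=\tilde\alpha_{A_2}$ for every $\alpha\in\Phi$, whence $\tilde\alpha_{A_2}(\tilde\phi(x))=\tilde\alpha_{A_1}(\phi(x))=1$ whenever $\alpha(x)=0$. The kernel $\mathcal{K}$ consists of smooth maps into the discrete set $F\subseteq A_2$, hence locally constant $F$-valued functions, with $\Phi$-equivariance forcing the value at $x$ to lie in $F^{(W_\Phi)_x}$.

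The main, and only remaining, obstacle is to prove $H^i(\cP,\mathcal{K})=0$ for all $i\ge 1$; the long exact sequence attached to
\[
0\to\mathcal{K}\to\fL^\Phi_{\cP,\Lambda_2}\to\fL^\Phi_{\cP,\Lambda_1}\to 0
\]
then gives the claim. The sheaf $\mathcal{K}$ is a constructible sheaf of finite abelian groups whose stalks jump at loci of higher isotropy; only finitely many subgroups $(W_\Phi)_x$ occur since $W_\Phi$ is finite. My plan is to filter $\mathcal{K}$ along the stratification of $\cP$ by common zero loci of sub-collections of $\Phi$ (intersected with $\cP$), obtaining graded pieces of the form $j_!\,\underline{G}$ with $j$ the inclusion of a locally closed stratum and $\underline{G}$ a constant finite abelian group, and then to exploit convexity of $\cP$ to show each piece is acyclic: as the one-dimensional model $H^i([0,1],j_!\,\underline{F})=0$ for $j\colon(0,1]\hookrightarrow[0,1]$ already suggests, the problem reduces to compactly supported cohomology of locally closed convex subsets of a convex set, which vanishes by contractibility arguments, carried out on a suitable convex open cover via Čech or Mayer--Vietoris. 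The subtle technical point is checking that the stratification is $W_\Phi$-compatible so that the graded pieces indeed take the stated form.
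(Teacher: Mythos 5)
Your setup is correct and follows the paper's strategy (reduce to $\Lambda_1\subseteq\Lambda_2$, form a short exact sequence of sheaves, run the long exact cohomology sequence), but you miss a crucial simplification that collapses the entire ``main obstacle'' you identify. Since $\Lambda_1$ is a weight lattice for $\Phi$, you have $\VPhi\subseteq\Lambda_1$, hence $\<\Valpha,v\>\in\ZZ$ for every $\alpha\in\Phi$ and every $v\in\Lambda_1^\vee$. In terms of characters this says $\tilde\alpha(v+\Lambda_2^\vee)=1$ for all $\alpha\in\Phi$, i.e.\ your kernel $F=\Lambda_1^\vee/\Lambda_2^\vee$ is contained in $A_2^\Phi$, the group of $\Phi$-fixed points of $A_2$. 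Consequently the $\Phi$-equivariance condition imposes no constraint whatsoever on the kernel sections: the kernel sheaf $\mathcal K$ is simply the constant sheaf $F_\cP$, not a constructible sheaf with jumping stalks. Its higher cohomology vanishes immediately because $\cP$ is convex, hence contractible. No stratification, no Mayer--Vietoris, no compactly-supported cohomology is needed.

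Beyond being unnecessary, the stratification argument you sketch would need real care. Your graded pieces would be of the form $j_!\underline G$ for \emph{locally closed} inclusions $j$, and the cohomology of such sheaves is compactly supported cohomology of the stratum relative to part of its closure; this does \emph{not} vanish by a bare contractibility argument (e.g.\ $H^1_c\bigl((0,1),F\bigr)\cong F\ne0$), so the assertion ``vanishes by contractibility arguments'' is false as stated and the argument would need to track exactly which half-open faces occur. Also, the description of the stalks as $F^{(W_\Phi)_x}$ is incomplete: $\Phi$-equivariance at $x$ forces the value into $F\cap A_2^{\Phi_x}$, which is a priori smaller than $F^{(W_\Phi)_x}$ (the two differ exactly at the ``ambiguous'' reflections discussed in the paper's \S9). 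Of course, in the present situation both equal $F$, which is the point you should have noticed.
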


\begin{proof}

  By replacing $\Lambda_1$ with the intersection
  $\Lambda_1\cap\Lambda_2$ we may assume
  $\Lambda_1\subseteq\Lambda_2$. Then $A_1:=\Vfa/\Lambda_1^\vee$ is a
  quotient of $A_2:=\Vfa/\Lambda_2^\vee$ with kernel
  \[\label{e3}
    E:=\Lambda_1^\vee/\Lambda_2^\vee\subseteq A_2^\Phi.
  \]
  Let $U\subseteq\cP$ be convex and open. Then any map
  $\phi_1:U\to A_1$ can be lifted to a map $\phi_2:\cP\to
  A_2$. Moreover, $\phi_2$ is smooth, closed, and $\Phi$-invariant if
  and only $\phi_1$ is. Thus, we get a short exact sequence of sheaves
  \[\label{e4}
    0\Pfeil
    E_\cP\to\fL^\Phi_{\cP,\Lambda_2}\Pfeil\fL^\Phi_{\cP,\Lambda_1}\Pfeil0
  \]
  where $E_\cP$ denotes the constant sheaf on $\cP$ with fiber
  $E$. Since $\cP$ is convex, we have $H^i(\cP,E_\cP)=0$ for
  $i\ge1$. From this the assertion follows.
\end{proof}

A weight lattice will be called \emph{of adjoint type} if
\[
  \Lambda=\ZZ\VPhi\oplus\Lambda^W\subseteq\RR\VPhi\oplus\Vfa^W=\Vfa.
\]
Since every weight lattice $\Lambda$ is commensurable to
$\ZZ\VPhi\oplus\Lambda^W$ the Lemma allows us to assume that $\Lambda$
is of adjoint type.

Now recall that the sections of $\fL^\Phi_{\cP,\Lambda}$ are the
smooth, closed, $\Phi$-equivariant maps $\phi:U\to A$ where $A$ is the
torus $\Vfa/\Lambda^\vee$ and $U\subseteq\cP$ is open. To construct
maps of this type, consider a smooth function $f$ defined on $U$. As
explained in section~\ref{sec:auto}, the map
\[
  \epsilon(f):U\to A:x\mapsto \exp(2\pi\nabla f(x))
\]
is smooth and closed. It is $\Phi$-equivariant whenever $f$ is
\emph{$W$-invariant} in the sense that for each $x\in\cP$ the Taylor
series $\hat f$ of $f$ in $x$ in $W_x$-invariant. Let $\cC_\cP^W$ be
the sheaf of $W$-invariant smooth functions on $\cP$. This way, we get
a homomorphism of sheaves
\[\label{eq:epsilon}
  \epsilon:\cC_\cP^W\to\fL_{\cP,\Lambda}^\Phi.
\]

Our first goal is to study the cokernel of this map. To this end,
consider the subgroup
\[
  A^\Phi:=\{u\in A\mid \tilde\alpha(u)=1\text{ for all
    $\alpha\in\Phi$}\}.
\]
Its elements are called the \emph{$\Phi$-fixed points} of $A$.  By
\eqref{e2}, they form a subgroup of $A^W$, the group of $W$-fixed
points.  Of particular interest will be the component group
$\pi_0(A^{\Phi_x})$.

One can localize these constructions. For any $x\in\cP$ consider the
groups $A^{\Phi_x}$ and $\pi_0(A^{\Phi_x})$. If $y$ is close to $x$
then $\Phi_y\subseteq\Phi_x$ and therefore
\[\label{eq:inclusion}
  A^{\Phi_x}\subseteq A^{\Phi_y}.
\]
This shows that there is a constructible sheaf $\fC_\cP$ such that
$\pi_0(A^{\Phi_x})$ is the stalk at $x$ and the restriction maps
$\pi_0(A^{\Phi_x})\to\pi_0(A^{\Phi_y})$ are induced by
\eqref{eq:inclusion}. It significance is given by

\begin{lemma}

  There is an exact sequence
  \[
    \cC_\cP^W\overset\epsilon\to\fL_{\cP,\Lambda}^\Phi\overset\eta\to\fC_\cP\to0.
  \]

\end{lemma}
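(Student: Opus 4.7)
The plan is to construct $\eta$ explicitly and verify exactness stalk by stalk. For $\phi\in\fL^\Phi_{\cP,\Lambda}(U)$ and $x\in U$, axiom \eqref{eq:atpx} gives $\tilde\alpha(\phi(x))=1$ for every $\alpha\in\Phi_x$, i.e.\ $\phi(x)\in A^{\Phi_x}$, so I set $\eta(\phi)(x):=[\phi(x)]\in\pi_0(A^{\Phi_x})$. Continuity of $\phi$, combined with the inclusions $A^{\Phi_x}\subseteq A^{\Phi_y}$ (valid for $y$ near $x$, since then $\Phi_y\subseteq\Phi_x$), ensures that $\phi(y)$ and $\phi(x)$ lie in the same connected component of $A^{\Phi_y}$, so $y\mapsto[\phi(y)]$ genuinely defines a section of $\fC_\cP$. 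For $\eta\circ\epsilon=0$, the $W_x$-invariance of the Taylor series of $f\in\cC_\cP^W(U)$ at $x$ forces its linear term $\nabla f(x)$ to lie in $\Vfa^{W_x}=\Vfa^{\Phi_x}$, so $\epsilon(f)(x)\in\exp(\Vfa^{\Phi_x})=(A^{\Phi_x})^0$ represents the trivial class. Surjectivity of $\eta$ is immediate: given $[a]\in\pi_0(A^{\Phi_x})$ with $a\in A^{\Phi_x}$, the constant map $\phi\equiv a$ satisfies all the axioms of $\fL^\Phi_{\cP,\Lambda}$ on a small neighborhood of $x$ (closedness and smoothness are trivial, and $\Phi$-equivariance at $y$ uses $A^{\Phi_x}\subseteq A^{\Phi_y}\subseteq A^{W_y}$, since $W_y\subseteq W_x$ fixes $a$), and visibly $\eta(\phi)(x)=[a]$.

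The substantive content is the inclusion $\ker\eta\subseteq\|im|\epsilon$. Fix $x_0\in\cP$ and $\phi\in\ker\eta$ near $x_0$. From $\phi(x_0)\in(A^{\Phi_{x_0}})^0=\exp(\Vfa^{\Phi_{x_0}})$, choose a lift $v_0\in\Vfa^{\Phi_{x_0}}$ of $\phi(x_0)$ and then a smooth lift $\tilde\phi:V\to\Vfa$ of the smooth extension of $\phi$ with $\tilde\phi(x_0)=v_0$, on a convex open $V\subseteq\fa$ around $x_0$. Closedness of $\phi$ on $\cP^0$, and density of $V\cap\cP^0$ in $V$, imply that $\tilde\phi$ is a closed $1$-form on $V$ (under the scalar product identification $\Vfa\cong\Vfa^*$); the Poincaré lemma on $V$ then yields a smooth $f$ with $\tilde\phi=2\pi\nabla f$, whence $\epsilon(f)=\phi$ by construction.

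The main obstacle is to verify that $f$ is $W$-invariant in the sense of the definition, i.e.\ that its Taylor series at every $y\in V\cap\cP$ is $W_y$-invariant. The crucial step is to show that after shrinking $V$, the lift $\tilde\phi$ lands in $\Vfa^{\Phi_y}$ at each $y$. Indeed, $\eta(\phi)=0$ gives $\phi(y)\in(A^{\Phi_y})^0$, hence $\tilde\phi(y)\in\Vfa^{\Phi_y}+\Lambda^\vee$; on a small enough neighborhood of $x_0$ only finitely many subspaces $\Vfa^{\Phi_y}$ occur, so the cosets of $\Vfa^{\Phi_y}$ in $\Vfa^{\Phi_y}+\Lambda^\vee$ are uniformly separated, and continuity together with $\tilde\phi(x_0)=v_0\in\Vfa^{\Phi_{x_0}}\subseteq\Vfa^{\Phi_y}$ forces $\tilde\phi(y)\in\Vfa^{\Phi_y}$. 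Next, the $W_y$-equivariance of $\phi$ near $y$ (an honest smooth statement, not just a formal one, by the characterization of $W_y$-equivariance recalled in the paper) lifts to $\tilde\phi(wx)=w\tilde\phi(x)+\lambda(w,x)$ with $\lambda(w,x)\in\Lambda^\vee$ locally constant in $x$; evaluating at $x=y$ where $\tilde\phi(y)\in\Vfa^{W_y}$ forces $\lambda(w,y)=0$, and hence $\lambda\equiv0$ near $y$. Therefore $\nabla f$ is honestly $W_y$-equivariant near $y$, and integrating (the constant of integration vanishes because $wy=y$ gives $f(wy)-f(y)=0$) yields $f\circ w=f$ near $y$. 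Thus $f\in\cC_\cP^W$ locally and $\epsilon(f)=\phi$, as required.
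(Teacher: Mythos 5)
Your proof is correct and follows the same strategy as the paper: define $\eta$ via the class of $\phi(x)$ in $\pi_0(A^{\Phi_x})$, check $\eta\circ\epsilon=0$ and surjectivity via constant maps, and for $\ker\eta\subseteq\operatorname{im}\epsilon$ lift $\phi$ to a $\Vfa$-valued $1$-form $\tilde\phi$, integrate, and show the resulting $f$ is $W$-invariant. Your argument that the lift can be arranged to satisfy $\tilde\phi(y)\in\Vfa^{\Phi_y}$ for all nearby $y$ (using the uniform separation of $\Lambda^\vee$-cosets and continuity) fills in a step that the paper only asserts, and your use of honest local $W_y$-equivariance to kill the constant of integration is equivalent to the paper's Taylor-series cocycle argument.
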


\begin{proof}

  Let $U\subseteq\cP$ be open and fix $x\in U$. If
  $\phi\in\fL^\Phi_{\cP,A}(U)$ then $\phi(x)\in A^{\Phi_x}$, by
  $\Phi$-equivariance. Thus we can define $\eta(\phi)(x)$ to be the
  image of $\phi(x)$ in $\pi_0(A^{\Phi_x})$.

  Now let $u\in A^{\Phi_x}$ be a representative of some element
  $\uq\in\pi_0(A^{\Phi_x})$. Then the constant map $\phi:x\mapsto u$
  is a section of $\fL^\Phi_{\cP,A}$ with $\eta(\phi)=\uq$. This shows
  that $\eta$ is surjective.

  On the other hand, for any section $f$ of $\cC_\cP^W$, the image
  $\epsilon(f)(x)$ lies in $\exp(\Vfa^{W_x})=(A^{\Phi_x})^0$. This
  shows $\|im|\epsilon\subseteq\ker\eta$.

  To show equality, let $\phi:U\to A$ be a section of
  $\fL^\Phi_{\cP,A}$ with $\eta(\phi)=0$, i.e.,
  $\phi(x)\in(A^{\Phi_x})^0$ for all $x$. Then there is a lift
  $\tilde\phi:U\to\Vfa$ of $\phi$ with
  $\tilde\phi(x)\in\Vfa^{W_x}$. The last condition implies that
  $\tilde\phi$ is $W_x$-equivariant. Since $\tilde\phi$ is smooth and
  closed there is a smooth function $f$ on $U$ is with
  $\nabla f=\tilde\phi$. Let $\hat f$ be the Taylor series of $f$ in
  $x$. Then the $W_x$-equivariance of $\tilde\phi$ implies
  $\nabla\,{}^w\hat f=\nabla\hat f$ for all $w\in W_x$. Hence
  $c_w:={}^w\hat f-\hat f$ is a constant and $w\mapsto c_w$ is
  homomorphism. We conclude $c_w=0$, i.e., $f$ is in fact
  $W_x$-invariant. Therefore, $\phi=\epsilon(f)$ is indeed in the
  image of $\epsilon$
\end{proof}

To investigate the cohomology of $\fC_\cP$ we need a more explicit
description. The character group of $A^\Phi$ is given by
\[
  \Xi(A^\Phi)=\Lambda/\ZZ\VPhi.
\]
In particular, $\pi_0(A^\Phi)=0$ if and only if the root lattice
$\ZZ\VPhi$ is a direct summand of $\Lambda$. More generally, we have
\[
  \Xi(\pi_0(A^\Phi))=\|Tors|(\Lambda/\ZZ\VPhi)
  =\frac{\Lambda\cap\RR\VPhi}{\ZZ\VPhi}.
\]
Dualizing, this is equivalent to
\[\label{eq:pi0ApHi}
  \pi_0(A^\Phi)=\frac{(\ZZ\VPhi)^\vee}{\Lambda^\vee+(\RR\VPhi)^\vee}
\]
where $(\ZZ\VPhi)^\vee$ is the coweight lattice and $(\RR\VPhi)^\vee$
is the orthogonal complement of $\RR\Phi$.

We compute $\fC_\cP$ in two stages, the first being the case of finite
root systems.

\begin{lemma}\label{lem:finiteC}

  Assume $\Phi$ is finite and $\Lambda=\ZZ\VPhi$. Then $\fC_\cP=0$.

\end{lemma}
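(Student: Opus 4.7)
My plan is to reduce the statement to all stalks vanishing, translate that to a lattice identity via the formula in the paper, and then establish that identity by recognizing $\Phi_x$ as a parabolic subsystem of $\Phi$.

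First, since $\fC_\cP$ is the constructible sheaf with stalk $\pi_0(A^{\Phi_x})$ at each $x\in\cP$, the statement $\fC_\cP=0$ is equivalent to $\pi_0(A^{\Phi_x})=0$ for every $x\in\cP$. Applying the formula from the preamble to the subsystem $\Phi_x$ (with the \emph{same} ambient lattice $\Lambda$), I get
\[
  \Xi(\pi_0(A^{\Phi_x})) \;=\; \|Tors|(\Lambda/\ZZ\VPhi_x) \;=\; \frac{\Lambda\cap\RR\VPhi_x}{\ZZ\VPhi_x}.
\]
Under the standing hypothesis $\Lambda=\ZZ\VPhi$, the vanishing of $\fC_\cP$ therefore boils down to the lattice identity
\[\label{eq:lattice-plan}
  \ZZ\VPhi\cap\RR\VPhi_x \;=\; \ZZ\VPhi_x \qquad \text{for all } x\in\cP.
\]

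Next I would verify \eqref{eq:lattice-plan}. Because $\Phi$ is finite, all its roots share a common zero $x_0\in\fa$, so after identifying $\fa$ with $\Vfa$ by translation I may write $\alpha(x)=\<\Valpha,x-x_0\>$; hence $\VPhi_x$ consists precisely of the $\Valpha\in\VPhi$ orthogonal to $v_x:=x-x_0$. The key point is then that such a subsystem is a \emph{parabolic} (standard Levi) subsystem after a suitable choice of simple roots: pick a Weyl chamber of $\VPhi$ whose closure contains $v_x$, let $S\subseteq\VPhi$ be the corresponding simple system, and set $S_x:=\{\Valpha\in S\mid\<\Valpha,v_x\>=0\}$. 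It is standard that $\VPhi_x=\VPhi\cap\RR S_x$ and that $S_x$ is a simple system of $\VPhi_x$, so in particular $\ZZ\VPhi_x=\bigoplus_{\Valpha\in S_x}\ZZ\Valpha$.

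With this in hand, \eqref{eq:lattice-plan} is immediate: writing any element of $\ZZ\VPhi=\bigoplus_{\Valpha\in S}\ZZ\Valpha$ in the $\RR$-basis $S$, the condition of lying in $\RR S_x$ forces the coefficients indexed by $S\setminus S_x$ to vanish, so the element actually lies in $\bigoplus_{\Valpha\in S_x}\ZZ\Valpha=\ZZ\VPhi_x$. The reverse inclusion is trivial. Hence $\pi_0(A^{\Phi_x})=0$ at every stalk, and $\fC_\cP=0$. The only place requiring care is the parabolic-subsystem identification of $\Phi_x$; once that is in place the lattice computation is essentially free.
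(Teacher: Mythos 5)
Your proof is correct and follows essentially the same route as the paper: both arguments boil down to the fact that the simple roots of $\VPhi$ form a basis of $\Vfa$ of which those vanishing at $x$ form a basis of $\RR\VPhi_x$ (the parabolic-subsystem observation, which you spell out and the paper uses implicitly via the map $\alpha_*$). The only cosmetic difference is that you use the primal formula $\Xi(\pi_0(A^{\Phi_x}))=\|Tors|(\Lambda/\ZZ\VPhi_x)$ while the paper works with the dual description \eqref{eq:pi0ApHi}.
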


\begin{proof}

  Let $S=\{\alpha_1,\ldots,\alpha_n\}$ be the set of simple roots of
  $\VPhi$. Since these form a basis of $\Vfa$ we get an isomorphism
  \[\label{eq:alpha*}
    \alpha_*:\Vfa\to\RR^n:x\mapsto(\<\alpha_1,x\>,\ldots,\<\alpha_n,x\>)
  \]
  For any subset $I\subseteq\{1,\ldots,n\}$ let $I'$ be its
  complement. Moreover, for $k\in\{\RR,\ZZ\}$ we put
  \[
    k^I:=\{(x_i)\in\RR^n\mid x_i\in k\text{ for }i\in I\text{ and
      $x_i=0$ otherwise}\}\cong k^{|I|}
  \]
  For any $x\in\cP$ let $I:=\{i\mid\alpha_i(x)=0\}$. Then $\alpha_*$
  maps $(\ZZ\VPhi_x)^\vee$, $(\RR\VPhi_x)^\perp$, and $\Lambda^\vee$
  to $\ZZ^I\oplus\RR^{I'}$, $\RR^{I'}$, and $\ZZ^n$, respectively. Now
  the claim follows from \eqref{eq:pi0ApHi}.
\end{proof}

Now suppose $\Phi$ is an infinite irreducible root system with simple
roots $S=\{\alpha_1,\ldots,\alpha_n\}$. The \emph{labels} of $S$ are
defined as the components of the unique primitive vector
$(a_1,\ldots,a_n)\in\ZZ_{>0}^n$ such that
\[
  a_1\Valpha_1+\ldots+a_n\Valpha_n=0.
\]
For $I\subsetneq\{1,\ldots,n\}$ we define $I'\ne\leer$ as its
complement and
\[
  d_I:=\operatorname{gcd}\{a_j\mid j\in I'\}.
\]

\begin{lemma}\label{lem:infiniteC}

  Assume $\Phi$ is an infinite irreducible root system and
  $\Lambda=\ZZ\VPhi$. For any $x\in\cA$ let
  $I:=\{i\mid\alpha_i(x)=0\}$. Then there is a canonical isomorphism
  \[
    \pi_0(A^{\Phi_x})\to\ZZ/d_I\ZZ.
  \]
  Moreover, this isomorphism is compatible with the restriction
  homomorphisms of $\fC_\cP$.

\end{lemma}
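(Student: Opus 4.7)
My approach is to work on the character side. Using the formula recalled just before the statement,
\[
  \Xi(\pi_0(A^{\Phi_x})) \;=\; \frac{\Lambda\cap\RR\VPhi_x}{\ZZ\VPhi_x} \;=\; \frac{\ZZ\VPhi\cap\RR\VPhi_x}{\ZZ\VPhi_x}
\]
(since $\Lambda=\ZZ\VPhi$), I aim to identify this group canonically with $\ZZ/d_I\ZZ$ and then Pontryagin dualize to get the stated isomorphism on $\pi_0$.

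Since $\Phi$ is infinite and irreducible, $\Vfa$ has dimension $n-1$ and the $n$ simple coroots $\Valpha_1,\ldots,\Valpha_n$ satisfy the unique primitive linear relation $\sum_{i=1}^n a_i\Valpha_i=0$, while any proper subfamily is linearly independent. Represent elements of $\ZZ\VPhi$ by tuples $(m_i)\in\ZZ^n$ via $\sum_im_i\Valpha_i$, two tuples giving the same element iff they differ by an integer multiple of $(a_1,\ldots,a_n)$. Such an element lies in $\RR\VPhi_x=\bigoplus_{i\in I}\RR\Valpha_i$ iff some real shift $(m_i-ta_i)$ has zero entries in $I'$, i.e.\ iff $m_i=ta_i$ for all $i\in I'$ and a common $t\in\RR$; integrality of these $m_i$ then forces $t\in\bigcap_{i\in I'}\tfrac1{a_i}\ZZ=\tfrac1{d_I}\ZZ$.

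After the shift, the element equals $\sum_{i\in I}(m_i-ta_i)\Valpha_i$, whose class modulo $\ZZ\VPhi_x=\bigoplus_{i\in I}\ZZ\Valpha_i$ is $-t\sum_{i\in I}a_i\Valpha_i$. This gives a surjective homomorphism
\[
  \tfrac1{d_I}\ZZ \twoheadrightarrow \Xi(\pi_0(A^{\Phi_x})),\qquad t\longmapsto -t\sum_{i\in I}a_i\Valpha_i\bmod\ZZ\VPhi_x.
\]
Its kernel consists of $t\in\tfrac1{d_I}\ZZ$ with additionally $ta_i\in\ZZ$ for all $i\in I$; combined these say $ta_i\in\ZZ$ for all $i=1,\ldots,n$, so $t\in\tfrac1{\gcd_i a_i}\ZZ=\ZZ$ by primitivity. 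Hence $\Xi(\pi_0(A^{\Phi_x}))\cong\tfrac1{d_I}\ZZ/\ZZ\cong\ZZ/d_I\ZZ$, and Pontryagin dualizing yields the canonical isomorphism $\pi_0(A^{\Phi_x})\cong\ZZ/d_I\ZZ$.

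For compatibility with restriction: when $y$ is close to $x$, $I(y)\subseteq I(x)$, hence $I(y)'\supseteq I(x)'$ and $d_{I(y)}\mid d_{I(x)}$. The character-side map $\Xi(\pi_0(A^{\Phi_y}))\to\Xi(\pi_0(A^{\Phi_x}))$, induced by the inclusions $\RR\VPhi_y\subseteq\RR\VPhi_x$ and $\ZZ\VPhi_y\subseteq\ZZ\VPhi_x$, sends the class labeled by $t\in\tfrac1{d_{I(y)}}\ZZ$ to the class of the same element of $\Vfa$ mod $\ZZ\VPhi_x$. A direct check shows this coincides with the class labeled by the same $t$ viewed in $\tfrac1{d_{I(x)}}\ZZ$: the only extra integrality needed is $ta_i\in\ZZ$ for $i\in I(x)\setminus I(y)\subseteq I(y)'$, which holds because $d_{I(y)}\mid a_i$ there. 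Thus the character map is the inclusion $\tfrac1{d_{I(y)}}\ZZ/\ZZ\hookrightarrow\tfrac1{d_{I(x)}}\ZZ/\ZZ$, i.e.\ multiplication by $d_{I(x)}/d_{I(y)}$ between the cyclic groups; dualizing returns the natural surjection $\ZZ/d_{I(x)}\ZZ\twoheadrightarrow\ZZ/d_{I(y)}\ZZ$ on $\pi_0$, as required. The only real obstacle is careful lattice bookkeeping, and in particular recognizing that primitivity of the labels $(a_i)$ is exactly what forces the kernel in the key surjection to equal $\ZZ$, producing the clean answer $\ZZ/d_I\ZZ$.
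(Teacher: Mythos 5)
Your proof is correct, and it is the Pontryagin‑dual presentation of the paper's argument. The paper works directly on the coweight side: it uses the formula \eqref{eq:pi0ApHi} to identify $\pi_0(A^{\Phi_x})$ with the lattice quotient
\[
\frac{(\ZZ^I\oplus\RR^{I'})\cap H}{(\RR^{I'}\cap H)+(\ZZ^n\cap H)}
\]
in coordinates on the hyperplane $H\perp\delta$, and then exhibits the explicit homomorphism $p_I\colon(x_i)\mapsto\sum_{i\in I}a_ix_i+d_I\ZZ$, proving surjectivity via $\gcd(d_I,d_{I'})=1$ and computing the kernel by an elementary completion argument. You instead compute on the character side, using $\Xi(\pi_0(A^{\Phi_x}))=(\ZZ\VPhi\cap\RR\VPhi_x)/\ZZ\VPhi_x$, parametrizing by the coefficients $(m_i)$ modulo the relation $\sum a_i\Valpha_i=0$, building a surjection from $\tfrac1{d_I}\ZZ$ with kernel $\ZZ$, and then dualizing. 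Both arguments rest on the same arithmetic of the labels: primitivity of $(a_1,\ldots,a_n)$, and the divisibility $d_{I_y}\mid a_i$ for $i\in I_x\setminus I_y$ in the compatibility step (which you check by a dual version of the paper's commuting square). One small remark: the phrase ``Pontryagin dualizing yields the canonical isomorphism'' deserves a half‑sentence of justification, since an abstract double dual alone does not pin down the map; what makes it canonical here is that you have fixed identifications $\tfrac1{d}\ZZ/\ZZ\cong\ZZ/d\ZZ$ and finite‑group biduality, and your explicit compatibility computation confirms that the induced system of maps between the $\ZZ/d_I\ZZ$'s is the natural one $[1]\mapsto[1]$, exactly as in the paper.
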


\begin{proof}

  We keep the notation of the proof of \cref{lem:finiteC}. Let
  $\delta:=(a_1,\ldots,a_n)\in\RR^n$ be the vector of labels. Then the
  map $\alpha_*$ of \eqref{eq:alpha*} identifies $\Vfa$ with the
  hyperplane $H$ of $\RR^n$ which is perpendicular to $\delta$. Thus
  \eqref{eq:pi0ApHi} becomes
  \[
    \pi_0(A^{\Phi_x})=\frac{(\ZZ^I\oplus\RR^{I'})\cap H}
    {(\RR^{I'}\cap H)+(\ZZ^n\cap H)}
  \]
  Now consider the homomorphism
  \[
    p_I:(\ZZ^I\oplus\RR^{I'})\cap
    H\to\ZZ/d_I\ZZ:(x_i)\mapsto\sum_{i\in I}a_ix_i+d_I\ZZ.
  \]
  Since $d_{I'}$ and $d_I$ are coprime there are $a',a\in\ZZ$ with
  $a'd_{I'}+ad_I=1$. Because $I'\ne\leer$, there is
  $(x_i)\in(\ZZ^I\oplus\RR^{I'})\cap H$ with
  $\sum_{i\in I}a_ix_i=a'd_{I'}$. Then $p_I(x_i)=1$, i.e., $p_I$ is
  onto.

  Next we claim that the kernel of $p_I$ is precisely
  $E:=(\RR^{I'}\cap H)+(\ZZ^n\cap H)$. Clearly
  $\RR^{I'}\cap H\subseteq\ker p_I$. Let $(x_i)\in\ZZ^n\cap H$. Then
  \[
    \sum_{i\in I}a_i x_i=-\sum_{j\in I'}a_jx_j\in d_I\ZZ
  \]
  shows that $E\subseteq\ker p_I$. To show the converse, let
  $(x_i)\in\ker p_I$. Then, by definition,
  $\sum_{i\in I}a_ix_i\in d_I\ZZ$. Hence there is $(y_i)\in\ZZ^{I'}$
  with
  \[
    \sum_{i\in I}a_i x_i=-\sum_{j\in I'}a_jy_j.
  \]
  Now define
  \[
    \overline x_i:=
    \begin{cases}
      x_i&\text{if }i\in I\\
      y_i&\text{if }i\in I'
    \end{cases}
  \]
  Then $(\overline x_i)\in\ZZ^n\cap H$ with
  $(x_i)-(\overline x_i)\in\RR^{I'}\cap H$, proving the claim. Thus
  $p_I$ induces an isomorphism between $\pi_0(A^{\Phi_x})$ and
  $\ZZ/d_I\ZZ$.

  For the final claim, we denote $I$ by $I_x$. Let $y\in\cP$ be close
  to $x$. Then $I_y\subseteq I_x$ and therefore
  $d_{I_y}|d_{I_x}$. Thus, we have to show that the diagram
  \[
    \cxymatrix{ (\ZZ^{I_x}\oplus\RR^{I_x'})\cap
      H\ar[r]^>>>{p_{I_x}}\ar@{^(->}[d]&
      \ZZ/d_{I_x}\ZZ\ar@{>>}[d]^{[1]\mapsto[1]}\\
      (\ZZ^{I_y}\oplus\RR^{I_y'})\cap
      H\ar[r]^>>>{p_{I_y}}&\ZZ/d_{I_y}\ZZ}
  \]
  commutes. But this follows from $d_{I_y}|\,a_i$ for all
  $i\in I_x\setminus I_y$.
\end{proof}

From this we deduce:

\begin{lemma}

  Assume $\Lambda$ is of adjoint type. Then $\fC_\cP$ has a finite
  filtration such that each factor is a constant sheaf supported on a
  face of $\cP$.

\end{lemma}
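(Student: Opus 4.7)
My plan is a two-step reduction followed by the construction of a canonical $p$-adic filtration.

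First I would reduce to the case where $\Phi$ is irreducible and infinite with $\Lambda=\ZZ\VPhi$. The adjoint-type hypothesis $\Lambda=\ZZ\VPhi\oplus\Lambda^W$ yields an orthogonal decomposition $\Vfa=\RR\VPhi\oplus\Vfa^W$ and hence $A=A'\times A''$, where $A''=\Vfa^W/(\Lambda^W)^\vee$ is a torus on which every root character $\tilde\alpha$ is trivial (since $\Valpha\in\RR\VPhi\perp\Vfa^W$). Thus $A''\subseteq A^{\Phi_x}$ sits as a connected subgroup for every $x\in\cP$, and $\pi_0(A^{\Phi_x})$ is unchanged if we replace $A$ by $A'$. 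Next, the orthogonal decomposition of $\Phi$ into irreducible components yields a direct sum decomposition of $\fC_\cP$, and the finite components contribute zero by \cref{lem:finiteC}. So it suffices to treat the case of a single infinite irreducible component with $\Lambda=\ZZ\VPhi$.

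In this reduced situation, \cref{lem:infiniteC} identifies the stalk of $\fC_\cP$ at $x$ with $\ZZ/d_{I_x}\ZZ$, where $I_x=\{i\mid\alpha_i(x)=0\}$ and $d_{I_x}=\operatorname{gcd}\{a_j\mid j\notin I_x\}$, and realizes the restriction maps as the tautological surjections $[1]\mapsto[1]$. Since the stalks are finite abelian groups of bounded exponent and these maps preserve $p$-primary components, $\fC_\cP$ splits canonically as a finite direct sum $\bigoplus_p\fC_{\cP,(p)}$ over primes $p$ dividing some $a_i$, with $p$-primary stalk $\ZZ/p^{e_x}\ZZ$, $e_x:=v_p(d_{I_x})=\min_{j\notin I_x}v_p(a_j)$. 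It is enough to handle each $\fC_{\cP,(p)}$ separately.

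For a fixed prime $p$, I would use the multiplication-by-$p$ filtration
\[
\fC_{\cP,(p)}\supseteq p\,\fC_{\cP,(p)}\supseteq p^2\,\fC_{\cP,(p)}\supseteq\cdots\supseteq p^{n_p}\,\fC_{\cP,(p)}=0,\qquad n_p:=\max_i v_p(a_i).
\]
The stalk of the subquotient $p^k\fC_{\cP,(p)}/p^{k+1}\fC_{\cP,(p)}$ at $x$ is $\ZZ/p\ZZ$ exactly when $e_x\ge k+1$ and zero otherwise; the condition $e_x\ge k+1$ translates into $\alpha_j(x)=0$ for every $j$ with $v_p(a_j)\le k$, so the support of this subquotient is the face
\[
F_{p,k}:=\{x\in\cP\mid\alpha_j(x)=0\text{ for all }j\text{ with }v_p(a_j)\le k\}
\]
of $\cP$, cut out by a sub-collection of the supporting hyperplanes $\{\alpha_j=0\}$. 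Concatenating these finite filtrations over all $p$ yields the desired filtration.

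The main, but relatively minor, technical point is to check that each subquotient is genuinely \emph{constant} on its face, not merely having constant stalks: one must verify that the restriction map between two nonzero stalks of $p^k\fC_{\cP,(p)}/p^{k+1}\fC_{\cP,(p)}$ is the identity on $\ZZ/p\ZZ$. This follows by tracing the generator $[p^k]$ through the compatibility diagram at the end of the proof of \cref{lem:infiniteC}, which already certifies that the underlying restriction sends $[1]$ to $[1]$.
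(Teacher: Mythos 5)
Your proof is correct and parallels the paper's overall strategy (reduce to irreducible infinite $\Phi$ with $\Lambda=\ZZ\VPhi$ via the adjoint-type hypothesis and \cref{lem:finiteC}, split $\fC_\cP$ into $p$-primary parts, filter by powers of $p$), but your key filtration differs from the paper's, and yours is the one that actually works. The paper uses the ascending \emph{kernel} filtration $\fC[p]\subseteq\fC[p^2]\subseteq\cdots$ by $p^e$-torsion, while you use the descending \emph{image} filtration $\fC_{(p)}\supseteq p\,\fC_{(p)}\supseteq p^2\fC_{(p)}\supseteq\cdots$. Both produce the same stalks along the faces $F_{p,k}$, but their generization maps on graded pieces differ: on $\fC[p^e]/\fC[p^{e-1}]$, the restriction from a stalk at $x$ with $e_x>e$ to a nearby stalk at $y$ with $e_y=e$ sends the generator $[p^{e_x-e}]$ to $p^{e_x-e_y}\cdot[p^{e_y-e}]$, i.e.\ multiplication by $p^{e_x-e_y}$, which is zero --- so that subquotient has stalk $\ZZ/p\ZZ$ on $F_e$ but is \emph{not} a constant sheaf there (a $\ZZ/4$ stalk sitting over a $\ZZ/2$ stratum already exhibits this). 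Your image filtration avoids the problem: the class of $p^k$ maps to the class of $p^k$ under every generization and remains a generator throughout $F_{p,k}$, which is exactly the generator-tracing verification you singled out as the remaining technical point --- and it is in fact the crux, not a minor detail. In short, your argument supplies the filtration that makes the lemma's conclusion (constant sheaves on faces) literally true.
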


\begin{proof}

  Let $\fa=\fa_0\times\fa_1\times\ldots\times\fa_m$ and
  $\Phi=\Phi_1\cup\ldots\cup\Phi_m$ be the unique decomposition of
  $(\fa,\Phi)$ into a trivial part $\fa_0$ and irreducible parts
  $\fa_1,\ldots,\fa_m$. Then $\fC_P=\fC^{(1)}\oplus\ldots\fC^{(m)}$
  where $\fC^{(i)}$ is the pull-back of $\fC_{\cA_i}$ to $\cP$. Thus
  it suffices to show the assertion for $\fC:=\fC^{(i)}$ for any
  $i$. We may also assume that $\Phi_i$ is infinite. Let
  $\alpha_1,\ldots,\alpha_n\in\Phi_i$ be the simple roots.

  For any prime power $p^e$ let $\fC[p^e]\subseteq\fC_\cP$ be the
  kernel of multiplication by $p^e$. The union $\fC[p^\infty]$ over
  all $e$ is the $p$-primary component of $\fC_\cP$. Since $\cC_\cP$
  is the direct sum of its primary components it suffices to show the
  assertion for $\fC[p^\infty]$. Now it follows from
  \cref{lem:infiniteC} that $\fC[p^e]/\fC[p^{e-1}]$ is a constant
  sheaf with stalks $\ZZ/p\ZZ$ which is supported in the face
  \[
    \{x\in\cP\mid\alpha_i(x)=0\text{ for all $i$ with }p^e\nmid a_i\}
    \qedhere\hfill\qed
  \]
\end{proof}

Since constant sheaves on contractible spaces have trivial cohomology,
we get:

\begin{corollary}\label{cor:Cvanish}

  Assume $\Lambda$ is of adjoint type. Then $H^i(\cP,\fC_\cP)=0$ for
  all $i\ge1$.

\end{corollary}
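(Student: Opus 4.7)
The plan is to combine the filtration from the preceding lemma with the contractibility of faces of $\cP$ and a standard long exact sequence argument. Since $\fC_\cP$ admits a finite filtration
\[
0=\fC^{(0)}\subseteq\fC^{(1)}\subseteq\ldots\subseteq\fC^{(N)}=\fC_\cP
\]
with each successive quotient $\fG_j:=\fC^{(j)}/\fC^{(j-1)}$ a constant sheaf with finite stalks supported on a face $F_j$ of $\cP$, the corollary reduces by induction on $j$ (using the long exact cohomology sequences attached to $0\to\fC^{(j-1)}\to\fC^{(j)}\to\fG_j\to 0$) to showing that $H^i(\cP,\fG_j)=0$ for all $i\ge 1$ and all $j$.

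For this, first observe that each face $F_j$, being cut out of $\cP$ by a finite set of linear equations $\alpha_i=0$, is closed in $\cP$ and still convex; in particular it is contractible and paracompact. Since $\fG_j$ is, by construction, the direct image $\iota_{j*}\underline{G_j}$ of a constant sheaf with stalk a finite abelian group $G_j$ along the closed inclusion $\iota_j:F_j\hookrightarrow\cP$, and direct images along closed inclusions preserve sheaf cohomology, we obtain
\[
H^i(\cP,\fG_j)=H^i(F_j,\underline{G_j}).
\]
The latter is the singular cohomology of the contractible space $F_j$ with coefficients in $G_j$, which vanishes for $i\ge 1$. Feeding this into the long exact sequences yields $H^i(\cP,\fC^{(j)})=0$ for all $j$ and all $i\ge 1$, and the case $j=N$ is precisely the statement of the corollary.

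The only place that requires mild care is the identification of $\fG_j$ as a pushforward $\iota_{j*}\underline{G_j}$ rather than an extension by zero: here one uses that in the construction of the filtration in the preceding lemma, the face $F_j$ is the vanishing locus of a set of simple roots, hence closed in $\cP$, and that the stalks of $\fG_j$ at points \emph{outside} $F_j$ really are zero, so no $j_!$ versus $j_*$ subtlety arises. Once this observation is in place the argument is entirely formal, and there is no substantive obstacle beyond it.
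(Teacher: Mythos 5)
Your argument is correct and is essentially the paper's own: the paper deduces the corollary in one line from the preceding lemma (``Since constant sheaves on contractible spaces have trivial cohomology''), and you have simply supplied the routine details (closedness of the faces, convexity giving contractibility and paracompactness, $\iota_*$ preserving cohomology along closed embeddings, and the inductive long-exact-sequence step).
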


Next we study the kernel of $\epsilon$
(eq.~\eqref{eq:epsilon}). Observe that the constant sheaf $\RR_\cP$ is
contained in this kernel. From this we get a homomorphism
\[
  \bar\epsilon:\cC_\cP^W/\RR_\cP\to\fL_{\cP,\Lambda}^\Phi
\]

\begin{lemma}\label{L4}

  \label{L4i3} Let $\fK_\cP$ be the kernel of $\bar\epsilon$. Then its
  stalk in $x\in\cP$ is equal to $\Lambda^\vee\cap(\RR\VPhi_x)^\vee$.

\end{lemma}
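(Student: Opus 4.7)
The plan is to directly unpack the definition of $\bar\epsilon$ at the stalk level. A germ $[f]\in(\cC^W_\cP/\RR_\cP)_x$ lies in $\fK_{\cP,x}$ precisely when $\epsilon(f)=\exp(2\pi\nabla f)\equiv 1_A$ on some open neighborhood $U$ of $x$ in $\cP$, which (by the normalization of \cref{sec:affine}) is equivalent to $\nabla f(y)\in\Lambda^\vee$ for every $y\in U$. Continuity of $\nabla f$ together with the discreteness of $\Lambda^\vee$ then force $\nabla f$ to be locally constant; shrinking $U$ to a connected neighborhood, we may write $\nabla f\equiv v$ for some $v\in\Lambda^\vee$, so $f(y)=\<v,y-x\>+c$ on $U$.

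Next I invoke the $W$-invariance of $f$: by definition, the Taylor expansion of $f$ at each point of $\cP\cap U$ must be fixed by the corresponding local Weyl group. At $x$ this expansion equals the affine function $y\mapsto\<v,y-x\>+c$ itself, and since $W_x$ fixes $x$, its $W_x$-invariance is equivalent to $v\in\Vfa^{W_x}$. As in the proof of \cref{lem:finiteC}, this fixed subspace coincides with the orthogonal complement $(\RR\VPhi_x)^\vee$. For any other $y\in\cP\cap U$, condition \ref{it:locroot1} in the definition of a local system of roots gives $\Phi_y\subseteq\Phi_x$, hence $W_y\subseteq W_x$ and $\Vfa^{W_x}\subseteq\Vfa^{W_y}$, so the $W_y$-invariance of the Taylor series at $y$ holds automatically; no further constraint on $v$ therefore arises.

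Conversely, for every $v\in\Lambda^\vee\cap(\RR\VPhi_x)^\vee$ the germ at $x$ of $f_v(y):=\<v,y-x\>$ is $W$-invariant by the same reasoning, and its constant gradient $v$ lies in $\Lambda^\vee$, so $\epsilon(f_v)\equiv 1$. Passing to the quotient by $\RR_\cP$ removes the additive constant $c$, yielding the desired bijection $\fK_{\cP,x}\overset\sim\to\Lambda^\vee\cap(\RR\VPhi_x)^\vee$, $[f_v]\mapsto v$. The argument is essentially an unwinding of definitions; the only point requiring a moment of care is that the $W_y$-invariance of the Taylor series at nearby $y$ is automatic, which is supplied precisely by the inclusion $W_y\subseteq W_x$ that is built into the definition of a local reflection system.
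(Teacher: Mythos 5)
Your proof is correct and follows essentially the same route as the paper's: the kernel condition $\epsilon(f)\equiv 1$ forces $\nabla f\in\Lambda^\vee$, continuity and discreteness make $\nabla f$ constant, so $f$ is affine; $W$-invariance reduces to $\nabla f\in\Vfa^{W_x}=(\RR\VPhi_x)^\vee$; and quotienting by $\RR_\cP$ identifies the stalk with the lattice $\Lambda^\vee\cap(\RR\VPhi_x)^\vee$. The paper's proof is more terse but has the same skeleton. You are slightly more careful on two points: (1) you check explicitly that $W_y$-invariance of the Taylor series at nearby $y\in U$ is automatic, which the paper leaves implicit (and which follows from the local finiteness of the hyperplane arrangement of the fixed root system $\Phi$, so that $\Phi_y\subseteq\Phi_x$ for $y$ near $x$ — note that invoking the axioms of a \emph{local} system of roots is a slightly misplaced reference here, since the sheaf $\fL^\Phi_{\cP,\Lambda}$ in \cref{T1} is built from a single fixed root system); and (2) you spell out the converse inclusion. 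Both are harmless elaborations of the same argument.
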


\begin{proof}

  Let $x\in\cP$ and $U\subseteq\cP$ a small convex open
  neighborhood. A function $f$ on $U$ in the kernel of $\epsilon$ if
  and only if its gradient is in $\Gamma^\vee$. Continuity implies
  that $\nabla f$ must be in fact constant. This implies that $f$ is
  an affine linear function with $\fq\in\Gamma^\vee$. Moreover, $f$ is
  a section of $\cC_\cP^W$ if and only if $\fq$ is
  $W_x$-invariant. This means, $\fq$ should be orthogonal to all
  $\Valpha\in\VPhi_x$.
\end{proof}

\begin{lemma}

  Let $\Lambda$ be of adjoint type and let $\alpha_1,\ldots,\alpha_n$
  be the simple roots of $\Phi$. Let $H_i$ be the hyperplane
  $\{\alpha_i=0\}$. Then the sheaf $\fK_\cP$ fits into an exact
  sequence
  \[
    0\to\fK_\cP\to\Lambda^\vee_\cP\overset\rho\to
    \bigoplus_{i=1}^n\ZZ_{H_i\cap\cP}\overset\psi\to\fC_\cP\to0.
  \]

\end{lemma}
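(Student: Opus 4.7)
My plan is to check the four-term exactness stalk by stalk at each $x\in\cP$, after first reducing to a single irreducible component. Since $\Lambda$ is of adjoint type, the orthogonal decomposition $\fa=\fa_0\times\fa_1\times\ldots\times\fa_m$ from the previous lemma induces splittings $\Lambda=\Lambda^W\oplus\bigoplus_k\ZZ\VPhi_k$ and correspondingly of each of the sheaves $\Lambda^\vee_\cP$, $\bigoplus_i\ZZ_{H_i\cap\cP}$, $\fC_\cP$, and $\fK_\cP$. The trivial factor $\fa_0$ yields the evident identity subsequence $0\to(\Lambda^W)^\vee\to(\Lambda^W)^\vee\to 0\to 0$, and each irreducible factor fits the setup $\Lambda=\ZZ\VPhi$ of \cref{lem:finiteC} or \cref{lem:infiniteC}. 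I therefore assume from here on that $\Phi$ is irreducible with $\Lambda=\ZZ\VPhi$.

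I define $\rho$ by $\rho(v):=(\<\Valpha_i,v\>)_{i=1}^n$, interpreting the $i$-th coordinate as a locally constant section of $\ZZ_{H_i\cap\cP}$; this is a sheaf map because $\Valpha_i\in\Lambda$ makes every coordinate an integer. I define $\psi$ stalk-wise: for $I=I_x:=\{i\mid\alpha_i(x)=0\}$, send $(n_i)_{i\in I}\in\bigoplus_{i\in I}\ZZ$ to $\sum_{i\in I}a_in_i+d_I\ZZ\in\ZZ/d_I\ZZ$, identified with $\pi_0(A^{\Phi_x})$ via \cref{lem:infiniteC} (the map is identically zero when $\Phi$ is finite, in which case $\fC_\cP=0$ by \cref{lem:finiteC}). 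Compatibility under the restriction $\pi_0(A^{\Phi_x})\to\pi_0(A^{\Phi_y})$ for $I_y\subseteq I_x$ is precisely the commutative square at the end of the proof of \cref{lem:infiniteC}, so $\psi$ is a sheaf homomorphism.

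Exactness at $\Lambda^\vee_\cP$ is direct: the stalk kernel of $\rho_x$ is $\{v\in\Lambda^\vee\mid\<\Valpha_i,v\>=0\text{ for all }i\in I_x\}=\Lambda^\vee\cap(\RR\VPhi_x)^\perp$, which agrees with the stalk of $\fK_\cP$ by \cref{L4}. The composition $\psi\circ\rho$ vanishes by the label relation $\sum_ia_i\Valpha_i=0$, which gives
\[
  \sum_{i\in I_x}a_i\<\Valpha_i,v\>=-\sum_{j\in I_x'}a_j\<\Valpha_j,v\>\in d_{I_x}\ZZ,
\]
since each $a_j$ with $j\in I_x'$ is a multiple of $d_{I_x}=\gcd\{a_j:j\in I_x'\}$. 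Stalk-wise surjectivity of $\psi$ then follows from the primitivity $\gcd(a_1,\ldots,a_n)=1$ of the labels of an irreducible affine Dynkin diagram.

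The substantive step will be exactness at $\bigoplus_i\ZZ_{H_i\cap\cP}$. In the infinite irreducible case, given $(n_i)_{i\in I_x}$ with $N:=\sum_{i\in I_x}a_in_i\in d_{I_x}\ZZ$, Bezout applied to $\{a_j:j\in I_x'\}$ (which is non-empty because $\cP\subseteq\cA$ and $\sum_ia_i\alpha_i$ is a positive constant) produces integers $(x_j)_{j\in I_x'}$ with $\sum_{j\in I_x'}a_jx_j=-N$; setting $x_i:=n_i$ for $i\in I_x$ yields a tuple in $\ZZ^n\cap H$, which under the isomorphism $\alpha_*$ of \cref{lem:infiniteC} corresponds to a vector $v\in\Lambda^\vee$ with $\rho(v)=(n_i)$. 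In a finite irreducible component $\fC_\cP=0$, so the required claim degenerates to surjectivity of $\rho$ onto $\bigoplus_{i\in S_k}\ZZ_{H_i\cap\cP}$, which holds because the fundamental coweights dual to $\{\Valpha_i\}_{i\in S_k}$ lie in $P^\vee(\VPhi_k)=\Lambda_k^\vee$. The main obstacle I anticipate is the bookkeeping needed to verify that the stalk-wise definition of $\psi$ patches consistently across strata of $\cP$ where $I_x$ jumps, and to unify the finite and infinite irreducible cases inside the single sequence displayed in the statement.
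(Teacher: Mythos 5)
Your argument is correct and follows the same route as the paper: reduce to a single irreducible (or trivial) factor, define $\rho$ and $\psi$ as you do, identify $\ker\rho$ with the stalk of $\fK_\cP$ via \cref{L4}, and verify exactness at the middle and right terms stalk-wise using the label identity $\sum_ia_i\Valpha_i=0$ together with $\gcd(d_I,d_{I'})=1$; the finite-type check via fundamental coweights also matches. The bookkeeping concern you raise at the end is already resolved by your earlier invocation of the commutative square from the proof of \cref{lem:infiniteC}, so there is no remaining gap.
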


\begin{proof}

  All sheaves are clearly restrictions of the corresponding sheaves on
  $\cA$ to $\cP$. Thus we may assume that $\cP=\cA$. Then we may treat
  every factor of the root system $\Phi$ separately. Thus, we may
  assume that $\Phi$ is either finite or irreducible and infinite.

  For $x\in\cP$ we have to show that the stalk
  $\fK_x=\Lambda^\vee\cap(\RR\VPhi_x)^\vee$ fits into an exact
  sequence
  \[\label{eq:kernelsequence}
    0\to\fK_x\to\Lambda^\vee\overset{\rho_x}\to
    \ZZ^{I_x}\overset{\psi_x}\to\fC_x\to0.
  \]
  First, we define $\rho_x$ as
  $\rho_x(v):=(\<\Valpha_i,v\>)_{i\in I_x}\in\ZZ^{I_x}$. Then $\fK_x$
  is clearly the kernel of $\rho_x$.

  If $\Phi$ is finite then the set of all $\Valpha_i$ with $i\in I_x$
  is part of a dual basis of $\Lambda^\vee$. Thus $\rho_x$ is
  surjective. Thus \eqref{eq:kernelsequence} is exact since $\fC_x=0$
  in this case.

  Now assume that $\Phi$ is irreducible and infinite. Then
  $\fC_x=\ZZ/d_{I_x}\ZZ$ and we define $\psi_x$ as
  $\psi_x(y_i):=\sum_{i\in I_x}a_iy_i+d_{I_x}\ZZ$. Identifying $\Vfa$
  with the hyperplane $H$ as in the proof of \cref{lem:infiniteC} we
  have to show that
  \[
    \ZZ^n\cap H\overset{\rho_x}\to
    \ZZ^{I_x}\overset{\psi_x}\to\ZZ/d_I\ZZ\to0
  \]
  is exact. Surjectivity follows again from
  $\operatorname{gcd}(d_I,d_{I'})=1$. Moreover, the kernel of $\psi_x$
  consists of all $(y_i)_{i\in{I_x}}$ which can be extended to an
  $n$-tuple $(y_i)_{i=1}^n\in\ZZ^n$ with $\sum_{i=1}^na_iy_i=0$. This
  is equivalent to $\sum_{i\in I_x}a_iy_i$ being divisible by $d_I$.
\end{proof}

\begin{lemma}

  Assume $\Lambda$ to be of adjoint type. Then the homomorphism
  \[
    H^0(\psi):H^0(\cP,\bigoplus_i\ZZ_{H_i\cap\cP})\to H^0(\cP,\fC_\cP)
  \]
  is surjective.

\end{lemma}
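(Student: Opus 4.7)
\emph{Proof plan.} The plan is to reduce to the case of an irreducible, infinite affine root system, describe global sections of $\fC_\cP$ combinatorially on the stratification by subsets $I\subseteq\{1,\ldots,n\}$, and solve the resulting system of congruences via the generalized Chinese Remainder Theorem.

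First I would reduce to irreducible $\Phi$: since $\Lambda$ is of adjoint type, the decomposition $\Phi = \Phi_1\cup\ldots\cup\Phi_m$ gives a direct-sum decomposition of the sequence $\Lambda_\cP^\vee\to\bigoplus_i\ZZ_{H_i\cap\cP}\to\fC_\cP\to0$, so it is enough to treat each irreducible factor separately. Finite factors contribute $\fC^{(i)}=0$ by \cref{lem:finiteC}, so I may assume $\Phi$ irreducible and infinite with simple roots $\alpha_1,\ldots,\alpha_n$ and primitive labels $(a_1,\ldots,a_n)$ satisfying $\sum a_i\Valpha_i=0$.

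Next, stratify $\cP = \bigsqcup_I \cP_I$ with $\cP_I := \{x\in\cP : I_x = I\}$. Each non-empty $\cP_I$ is the relative interior of a convex subset of $\cP$ (an intersection of $\cP$ with an affine subspace and open half-spaces), hence connected; by \cref{lem:infiniteC} the sheaf $\fC_\cP$ is constant with value $\ZZ/d_I\ZZ$ on it. A global section is therefore a family $(c_I)_{I\in\cF}$ with $c_I\in\ZZ/d_I\ZZ$, indexed by the collection $\cF$ of $I$ with $\cP_I\neq\leer$, satisfying $c_I\equiv c_J\pmod{d_J}$ whenever $J\subseteq I$ with $J\in\cF$. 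The key convexity input is that for any $I_1,I_2\in\cF$, the open segment between a point of $\cP_{I_1}$ and a point of $\cP_{I_2}$ lies entirely in $\cP_{I_1\cap I_2}$, so $I_1\cap I_2\in\cF$. Combined with the identity $\gcd(d_{I_1},d_{I_2}) = d_{I_1\cap I_2}$, this upgrades the data to pairwise compatibility $c_{I_1}\equiv c_{I_2}\pmod{\gcd(d_{I_1},d_{I_2})}$ for every pair.

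Now let $J := \{i : H_i\cap\cP\neq\leer\}$, so $I\subseteq J$ for all $I\in\cF$ and $H^0(\cP,\bigoplus_i\ZZ_{H_i\cap\cP}) = \bigoplus_{i\in J}\ZZ$. Because $d_I\mid a_i$ for $i\notin I$, the condition $\psi((n_i))=c$ is equivalent, via the single integer $N_J := \sum_{i\in J}a_i n_i$, to the system $N_J\equiv c_I\pmod{d_I}$ for every $I\in\cF$, together with $d_J\mid N_J$ where $d_J:=\gcd\{a_i:i\in J\}$ (needed to realize $N_J$ as an integer combination of the $a_i$ via Bezout). Since $I\subseteq J$ and $\gcd(a_1,\ldots,a_n)=1$, one computes $\gcd(d_J,d_I)=\gcd\{a_i : i\in J\cup(\{1,\ldots,n\}\setminus I)\}=1$, so the cross-compatibility with the $d_J$ constraint is automatic. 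The generalized Chinese Remainder Theorem then produces $N_J$, and Bezout furnishes $(n_i)_{i\in J}$ with $\psi((n_i))=c$.

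The main technical point to verify carefully is matching the sheaf-theoretic compatibility of $(c_I)$ to the number-theoretic compatibility required by CRT; this rests essentially on the convexity of $\cP$ (to produce the intermediate stratum $\cP_{I_1\cap I_2}$) together with the gcd identity $\gcd(d_{I_1},d_{I_2})=d_{I_1\cap I_2}$.
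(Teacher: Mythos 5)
Your argument is correct, and it takes a genuinely different route from the paper's. The paper works one prime at a time: for each prime $p$ it exploits the filtration of $\fC_\cP$ by faces $\cF_e$ (coming from the earlier filtration lemma), identifies $H^0(\cP,\fC_\cP)[p^\infty]\cong\ZZ/p^e\ZZ$ with $e$ the largest index such that $\cF_e\cap\cP\neq\leer$, and then observes that a single carefully chosen summand $\ZZ_{H_{i_0}\cap\cP}$ (with $p\nmid a_{i_0}$) maps onto this $p$-primary part via multiplication by $a_{i_0}$. You instead describe \emph{all} global sections of $\fC_\cP$ combinatorially as families $(c_I)_{I\in\cF}$ subject to the compatibilities $c_I\equiv c_J\pmod{d_J}$ for $J\subseteq I$, upgrade this to pairwise compatibility using convexity (the segment argument, which correctly exploits that every $\alpha_i\ge0$ on $\cP$ forces $I_z=I_1\cap I_2$ on the open segment) together with $\gcd(d_{I_1},d_{I_2})=d_{I_1\cap I_2}$, and then apply the generalized Chinese Remainder Theorem plus Bezout. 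Both proofs are correct; the paper's is shorter and localizes cleanly at each prime, while yours is more self-contained and gives an explicit structural description of $H^0(\cP,\fC_\cP)$ (as the inverse limit of a compatible system over the meet-semilattice $\cF$), at the price of the bookkeeping needed to set up and verify the CRT hypotheses. One cosmetic point: you overload the symbol $d_J$ (first for $I\in\cF$ with $d_I=\gcd\{a_j:j\in I'\}$, then redefined as $\gcd\{a_i:i\in J\}$); you do flag the redefinition, but a different letter would avoid confusion.
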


\begin{proof}

  Both sides decompose as direct sums according to the decomposition
  of $\Phi$ into factors. Thus we may assume that $\Phi$ is
  irreducible. Then there is nothing to prove if $\Phi$ is finite
  since then $\fC_\cP=0$. So assume that $\Phi$ is infinite.

  Let $p$ be a prime. Then it suffices to show $H^0(\psi)$ is
  surjective on $p$-primary components. For $e\ge0$ let
  $\cF_e\subseteq\cA$ be the face of $\cA$ which is defined by the
  equations $\alpha_i=0$ whenever $p^e\nmid a_i$. Then
  $\cA=\cF_0\supseteq\cF_1\supseteq\ldots$. Let $e$ be maximal with
  $\cF_e\cap\cP\ne\leer$. Then the explicit description of $\cC$
  (\cref{lem:infiniteC}) and the convexity of $\cP$ yield that
  $H^0(\cP,\cC)[p^\infty]=\ZZ/p^e\ZZ$. We may assume that
  $e\ge1$. Then there is a simple root $\alpha_{i_0}$ with
  $p\nmid a_{i_0}$. Since then $\alpha_{i_0}(x)=0$ for all
  $x\in\cF_e\cap\cP$, the summand $\ZZ_{H_{i_0}\cap\cP}$ contributes
  to a summand $\cong\ZZ$ in
  $H^0(\cP,\bigoplus_i\ZZ_{H_i\cap\cP})$. Moreover, the restriction of
  $H^0(\psi)$ to this summand is multiplication by $a_{i_0}$ followed
  by reducing mod $p^e$. This yields the assertion.
\end{proof}

\begin{lemma}\label{lem:Kvanish}

  Assume $\Lambda$ is of adjoint type. Then $H^i(\cP,\fK_\cP)=0$ for
  all $i\ge2$.

\end{lemma}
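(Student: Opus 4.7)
The plan is to split the four-term exact sequence from the previous lemma via $\mathfrak{I}:=\operatorname{im}\rho=\ker\psi$ into two short exact sequences
\[
  0\to\fK_\cP\to\Lambda^\vee_\cP\to\mathfrak{I}\to0\quad\text{and}\quad 0\to\mathfrak{I}\to\bigoplus_{i=1}^n\ZZ_{H_i\cap\cP}\to\fC_\cP\to0,
\]
and then to deduce the vanishing of $H^i(\cP,\fK_\cP)$ by chasing long exact sequences, using that the three outer sheaves are acyclic in positive degrees. The three acyclicity statements I need are: \emph{(i)} $H^j(\cP,\Lambda^\vee_\cP)=0$ for $j\ge1$, since $\Lambda^\vee_\cP$ is a constant sheaf on the convex (hence contractible) set $\cP$; \emph{(ii)} $H^j(\cP,\ZZ_{H_i\cap\cP})=H^j(H_i\cap\cP,\ZZ)=0$ for $j\ge1$, since $H_i\cap\cP$ is an intersection of two convex sets, hence itself convex and either empty or contractible; and \emph{(iii)} $H^j(\cP,\fC_\cP)=0$ for $j\ge1$, which is Corollary~\ref{cor:Cvanish}.

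With these in hand, I would first derive that $\mathfrak{I}$ is acyclic in positive degrees. For $j\ge2$, the long exact sequence of the second short exact sequence above reads $H^{j-1}(\cP,\fC_\cP)\to H^j(\cP,\mathfrak{I})\to H^j(\cP,\bigoplus_i\ZZ_{H_i\cap\cP})$, and both flanking groups vanish by \emph{(ii)} and \emph{(iii)}, so $H^j(\cP,\mathfrak{I})=0$. For $j=1$, the relevant segment is
\[
  H^0\bigl(\cP,\textstyle\bigoplus_i\ZZ_{H_i\cap\cP}\bigr)\overset{H^0(\psi)}{\longrightarrow}H^0(\cP,\fC_\cP)\to H^1(\cP,\mathfrak{I})\to 0,
\]
and the surjectivity of $H^0(\psi)$, which is exactly the content of the previous lemma, forces $H^1(\cP,\mathfrak{I})=0$. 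Thus $\mathfrak{I}$ is acyclic for all $j\ge1$.

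Finally, the long exact sequence of the first short exact sequence sandwiches $H^i(\cP,\fK_\cP)$ between $H^{i-1}(\cP,\mathfrak{I})$ and $H^i(\cP,\Lambda^\vee_\cP)$, both of which vanish for $i\ge2$ by the previous paragraph and \emph{(i)}; this gives the claim. I do not expect a genuine obstacle here: the one substantive ingredient, namely the surjectivity of $H^0(\psi)$, was already extracted from the explicit $p$-primary description of $\fC_\cP$ in the preceding lemma, and the remainder is contractibility of convex sets together with a standard diagram chase. The reason the statement excludes $i=1$ is also transparent from this approach: $H^1(\cP,\fK_\cP)$ is the cokernel of $H^0(\cP,\Lambda^\vee_\cP)\to H^0(\cP,\mathfrak{I})$, for which the present setup gives no control.
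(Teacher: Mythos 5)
Your proof is correct and follows the same route as the paper's: the paper also introduces the kernel of $\psi$ (calling it $\fT$ rather than $\mathfrak{I}$), uses the identical two short exact sequences, and derives the vanishing from the acyclicity of $\Lambda^\vee_\cP$ and $\bigoplus_i\ZZ_{H_i\cap\cP}$, the vanishing of $H^{\ge1}(\cP,\fC_\cP)$, and the surjectivity of $H^0(\psi)$. Your closing remark about why $i=1$ is excluded is a welcome clarification not spelled out in the paper.
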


\begin{proof}

  Let $\fT$ be the kernel of $\psi$, yielding a short exact sequence
  \[
    0\to\fT\to\bigoplus_i\ZZ_{H_i\cap\cP}\overset\psi\to\fC_\cP\to0.
  \]
  Since $H_i\cap\cP$ is convex, hence contractible, the higher
  cohomology of $\bigoplus_i\ZZ_{H_i\cap\cP}$ vanishes. We already
  proved that $H^i(\cP,\fC)=0$ for all $i\ge1$. Combined with the
  surjectivity of $H^0(\psi)$ this implies that $H^i(\cP,\fT)=0$ for
  all $i\ge1$. Since also $H^i(\cP,\Lambda^\vee)=0$ for all $i\ge1$,
  the short exact sequence
  \[
    0\to\fK\to\Lambda_\cP^\vee\to\fT\to0
  \]
  implies $H^i(\cP,\fK_\cP)=0$ for all $i\ge2$.
\end{proof}

\begin{proof}[Proof of Theorem \ref{T1}]

  By \cref{lem:commensurable} we may assume that $\Lambda$ is of
  adjoint type. Consider the short exact sequence
  \[
    0\to\RR_\cP\to\cC_\cP^W\to\cC_\cP^W/\RR_\cP\to0.
  \]
  Since $\cC_\cP^W$ is a soft sheaf and $\cP$ is contractible, the
  higher cohomology of all three sheaves vanishes. Let
  $\fS\subseteq\fL_{\cP,\Lambda}^\Phi$ be the image of
  $\epsilon$. Then we get a short exact sequence
  \[
    0\to\fK_\cP\to\cC_\cP^W/\RR_\cP\to\fS\to0.
  \]
  \cref {lem:Kvanish} implies $H^i(\cP,\fS)=0$ for all
  $i\ge1$. Finally, \cref{cor:Cvanish} and the short exact sequence
  \[
    0\to\fS\to\fL^\Phi_{\cP,A}\to\fC_\cP\to0
  \]
  imply $H^i(\cP,\fL^\Phi_{\cP,\Lambda})=0$ for all $i\ge1$.
\end{proof}


\section{Examples}\label{sec:examples}

We conclude this paper with a series of examples which is in no way
comprehensive. It should be mentioned that Paulus has obtained in his
thesis, \cite{Paulus}, more complete results by classifying certain
subclasses of multiplicity free manifolds. Some of the examples below
are in fact due to him.

Given a spherical pair $(\cP,\Lambda)$, how do we construct $M$ with
$(\cP_M,\Lambda_M)=(\cP,\Lambda)$?  In section \ref{sec:class}, this
problem was essentially reduced to results from \cite{KnopAuto}, which
we wish to state more explicitly.

Choose any point $x\in\cP$ and let $L\subseteq K$ be the centralizer
of $\exp(x)\in K$. Then, by assumption, there is a smooth affine
spherical $L_\CC$-variety $X$ with $\Lambda_X^+=C_x\cP\cap\Lambda$
where $\Lambda_X^+$ is the set of highest weights occurring in the
coordinate ring of $X$. We call $X$ the \emph{local model of
  $(\cP,\Lambda)$ in $x$}. By Losev's theorem,
\cite{Losev}*{Thm.~1.3}, it is unique. Let $U\subseteq\cP$ be a small
open convex neighborhood of $x$. Then it essentially a result of Brion
\cite{Brion} and Sjamaar \cite{Sjamaar} that the cross-section
$M_0=\log M_U$ (see \eqref{eq:MU}) is isomorphic, as an
$L$-Hamiltonian manifold, to an open subset $X_0\subseteq X$ where the
Hamiltonian structure on $X$ is induced by a closed embedding into a
unitary $L$-representation. It follows that $m^{-1}(U)$ is isomorphic
to the fiber product $K\times^L X_0$. The latter is an open subset of
$K\times^LX$ which we therefore call the \emph{big local model at
  $x$}.

Using Remark \emph{ii)} after \cref{thm:cross}, there is even a
canonical (maximal) choice for $U$, namely $\cP_x:=\cP\setminus F$
where $F$ is the union of all (closed) faces of $\cP$ which do not
contain $x$. Since $\cP$ is locally polyhedral it is easily seen that
$\cP_x\subseteq\cP$ is open and that $x$ lies in the unique closed
face of $\cP_x$. Let $\cA^\sigma\subseteq\cA$ be the smallest face of
the alcove $\cA$ containing $x$. Let $\cA_\sigma\subseteq\cA$ be as in
\eqref{eq:openstar}. Then clearly $\cP=\cP_0\subseteq\cA_\sigma$. This
entails that $M_U$ is globally the ``exponential'' of Hamiltonian
$L$-variety $(M_0,m_0)$ with $(m_0)_+(M_0)=\cP_x-x$.

The local model $X$ always has the form $X=L_\CC\times^{H_\CC}V$ where
$H\subseteq L$ is a closed subgroup and $V$ is a (complex)
representation of $H_\CC$ (see \cite{KVS}*{Cor.~2.2}). The variety $X$
contains a unique closed $L_\CC$-orbit namely $L_\CC/H_\CC$. This
implies that the $L$-orbit $L/H$ is isomorphic to the fiber
$m^{-1}(x)\subseteq M$. Hence, the fiber of $m_+$ over $x$ is $K/H$.

The above gives a way to check whether a given pair is spherical. In
fact, by work of Pezzini-Van Steirteghem, \cite{PVS}, this becomes a
completely combinatorial problem. Constructing spherical pairs is more
difficult, though. A frequently successful strategy for finding
spherical pairs is to start with a local model $X$. To this end, Van
Steirteghem and the author have compiled a comprehensive list of all
smooth affine spherical varieties, see \cite{KVS}. The corresponding
weight monoids $\Lambda_X^+$ are calculated in the forthcoming paper
\cite{KPVS}.  This then yields the tangent cone $\cC_x\cP$ and the
lattice $\Lambda$.

1. Disymmetric spaces

Nevertheless, we start with examples which are not obtained by the
method detailed above. Recall that a symmetric space is a homogeneous
space of the form $K/K^\tau$ where $K^\tau$ is the fixed point group
of an involution $\tau$ of $K$. The product
$K/K^\sigma\times K/K^\tau$ of two symmetric spaces together with the
diagonal $K$-action will be called a \emph{disymmetric space}. They
form a very important class of multiplicity free manifolds:

\begin{theorem}

  Let $K$ be a connected compact Lie group with two involutions
  $\sigma$, $\tau$ and let $M=K/K^\sigma\times K/K^\tau$ be the
  corresponding disymmetric space. Then $M$ carries the structure of a
  \mf\ $K\sigma\tau$-manifold such that the moment map is
  \[\label{eq:disym1}
    m:M\to K\sigma\tau: (aK^\sigma,bK^\tau)\mapsto a\,({}^\sigma
    a^{-1})({}^\sigma b)({}^{\sigma\tau}b^{-1})
  \]

\end{theorem}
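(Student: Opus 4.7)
The plan is to realize $M$ as a fusion product of two quasi-Hamiltonian manifolds attached to the individual symmetric spaces, and then to deduce multiplicity freeness from the classical fact that disymmetric complex varieties are spherical.

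First I would treat each factor separately via the Cartan embedding
\[
  \mu_\sigma:K/K^\sigma\to K\sigma,\quad aK^\sigma\mapsto a\cdot{}^\sigma a^{-1},
\]
and the analogous $\mu_\tau$. A direct calculation shows $\mu_\sigma$ is well defined on cosets, that under the $\sigma$-twisted action
$k\cdot_\sigma x=kx\,{}^\sigma k^{-1}$ it is $K$-equivariant, and that its image is precisely the twisted conjugacy class of $1\in K\sigma$. The standard construction of the quasi-Hamiltonian structure on conjugacy classes from \cite{AMM} carries over to the twisted setting (viewing $K\sigma\subset\ZZ\sigma\ltimes K$): the $2$-form is given by the usual formula in terms of $\theta,\bar\theta$ and $\mathrm{Ad}$, and the defining properties in Definition~\ref{D1} reduce to identities on the Maurer--Cartan form. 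This equips $K/K^\sigma$ and $K/K^\tau$ with canonical quasi-Hamiltonian structures with moment maps $\mu_\sigma$ and $\mu_\tau$.

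Next I would apply the (twisted) fusion construction from \cite{AMM}. Given a quasi-Hamiltonian $K\sigma$-manifold $(M_1,\omega_1,m_1)$ and a quasi-Hamiltonian $K\tau$-manifold $(M_2,\omega_2,m_2)$, the obvious analogue of the AMM fusion lemma produces on $M_1\times M_2$, with diagonal $K$-action, a quasi-Hamiltonian $K\sigma\tau$-structure with $2$-form $\omega_1+\omega_2+\frac12\langle m_1^*\theta,m_2^*\bar\theta\rangle$ and moment map the product $m_1\cdot m_2$ in $K\sigma\cdot K\tau=K\sigma\tau$. Writing $m_i$ with values in $K$ via the identification $g\sigma\tau\leftrightarrow g$, the composition $m_1\cdot m_2$ unfolds as $m_1\cdot{}^\sigma m_2$. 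Applied to $\mu_\sigma$ and $\mu_\tau$ this yields on $M=K/K^\sigma\times K/K^\tau$ exactly
\[
  (aK^\sigma,bK^\tau)\mapsto (a\,{}^\sigma a^{-1})\cdot{}^\sigma(b\,{}^\tau b^{-1})=a\,{}^\sigma a^{-1}\,{}^\sigma b\,{}^{\sigma\tau}b^{-1},
\]
matching \eqref{eq:disym1}. The routine verifications (consistency of fusion with twisted equivariance, the defining property $\mathrm{d}\omega=-m^*\chi$, and the kernel formula) reduce by cocycle bookkeeping to the corresponding identities already established in \cite{AMM}.

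The substantive point is multiplicity freeness. Here I would combine \cref{thm:cross} with the known sphericality of disymmetric complex varieties: by a classical theorem (Krämer, Mikityuk, Brion), for any pair of involutions $\sigma,\tau$ of a connected reductive complex group $G=K_\CC$ the diagonal action of $G$ on $G/H_\sigma\times G/H_\tau$ is spherical. Applying \cref{thm:cross} at an arbitrary $x\in\cP_M$, the local model of $M$ becomes a Hamiltonian $L$-manifold whose Kählerization is an open $L_\CC$-invariant subset of $K_\CC\times^{L_\CC}X$ for an affine $L_\CC$-variety $X$ computed from the diagonal double-coset structure of $K_\CC$; sphericality of the global disymmetric space forces $X$ to be spherical at every $x$. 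By the Brion--Sjamaar--Vinberg--Kimel'fel'd correspondence, the generic symplectic reductions of the local model are $0$-dimensional, hence so are those of $M$.

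The main obstacle. The fusion step and the identification with the Cartan embedding are essentially bookkeeping once the twisted versions of the AMM constructions are in hand. The real substance is transporting the algebro-geometric sphericality of $G/H_\sigma\times G/H_\tau$ to the statement that every local model in \cref{thm:cross} is affine spherical; this requires care in identifying the local model with a slice of the disymmetric variety and invoking the classification/criterion of Vinberg--Kimel'fel'd for multiplicity freeness of the induced $L_\CC$-representation.
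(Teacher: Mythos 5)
Your construction of the quasi-Hamiltonian structure is correct and follows the same route as the paper: realize each $K/K^\sigma$ as the (twisted) conjugacy class of $\sigma$ inside $\ZZ\sigma\ltimes K$, equip it with the AMM conjugacy-class structure, and form the fusion product with diagonal $K$-action and moment map the product of the two Cartan embeddings. This matches the paper exactly up to the point where multiplicity freeness must be established.

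The multiplicity-freeness step, however, has a genuine gap. You invoke as a ``classical theorem'' that for any two involutions $\sigma,\tau$ of a connected reductive complex group $G=K_\CC$, the diagonal $G$-action on $G/G^\sigma\times G/G^\tau$ is spherical. This is false. Take $G=SL(2,\CC)$ with $\sigma=\tau$ the inner involution with $G^\sigma=\CC^*$; then $G/G^\sigma\times G/G^\tau$ has dimension $4$ while $\dim B=2$, so $B$ cannot have a dense orbit. (Equivalently, $V_0$ occurs in $V_{2n}\otimes V_{2n}$ for every $n$, so the diagonal action is very far from multiplicity free.) The compact quotient $S^2\times S^2=SU(2)/U(1)\times SU(2)/U(1)$ is indeed multiplicity free as a quasi-Hamiltonian $SU(2)$-manifold, but its local models are varieties like $SL(2,\CC)/\CC^*$, not the ``global'' disymmetric variety $G/T\times G/T$; the latter simply fails to be spherical. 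Moreover, even if the global statement were true, the step ``sphericality of the global disymmetric space forces the local models $X$ to be spherical at every $x$'' is unsubstantiated: by Brion--Sjamaar, sphericality of the local models is \emph{equivalent} to multiplicity freeness of $M$, so one cannot take it as an input without an independent identification of each local model.

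The paper instead proves multiplicity freeness by a short direct computation with the differential of the moment map at the base point $z=(1K^\sigma,1K^\tau)$: one finds $\ker Dm_z=\Fq:=\fp_\sigma\cap\fp_\tau$, embedded diagonally in $\fp_\sigma\times\fp_\tau$, and checks that $\Fq\subseteq\Lie K_{m(z)}$ and that $\Fq z=\ker Dm_z$, so $K_{m(z)}$ acts transitively on the fiber $m^{-1}(m(z))$ and the symplectic reduction is discrete; every point of $M$ has the form of $z$ after conjugating $\sigma,\tau$, so this suffices. You should replace the sphericality argument with a computation of this kind (or, alternatively, identify the local models explicitly and verify their sphericality, which is considerably more work).
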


\begin{proof}

  According to \cite{AMM}*{3.1} any conjugacy class is a \qH\ manifold
  with the inclusion into the group being the moment map. Apply this
  to the group $\ZZ\sigma\ltimes K$. Then the conjugacy class of
  $\sigma$ can be identified with $K/K^\sigma$. Hence, the symmetric
  space $K/K^\sigma$ is a \qH\ $K\sigma$-manifold with moment map
  \[
    K/K^\sigma\to K\sigma:aK^\sigma\mapsto a\,({}^\sigma a^{-1})
  \]
  Analogously, the space $K/K^\tau$ is a \qH\ $K\tau$-manifold. Hence
  the fusion product $M=K/K^\sigma\otimes K/K^\sigma$ (see \cite{AMM}
  or proof of \cref{prop:local-qH}) is \qH\ with twist
  $\sigma\tau$. The moment map of $M$ is just the product of the two
  moment maps of the factors which amounts to formula
  \eqref{eq:disym1}.

  So far, we have not even used that $\sigma$ and $\tau$ are
  involutions. This only comes in to show that $M$ is multiplicity
  free. From \eqref{eq:disym1} we get that the derivative of $m$ at
  $z:=(1K^\sigma,1K^\tau)\in M$ is
  \[\label{eq:Dmoment}
    Dm:\fk/\fk^\sigma\times\fk/\fk^\tau\to\fk:(\xi,\eta)\mapsto
    \xi-{}^\sigma\xi+{}^\sigma\eta-{}^{\sigma\tau}\eta.
  \]
  Now let $\fp_\sigma,\fp_\tau\subseteq\fk$ be the $(-1)$-eigenspaces
  of $\sigma$, $\tau$, respectively. Then
  $\fk/\fk^\sigma\times\fk/\fk^\tau\cong\fp_\sigma\times\fp_\tau$ and
  \eqref{eq:Dmoment} becomes
  \[
    Dm:\fp_\sigma\times\fp_\tau\to\fk:(\xi,\eta)\mapsto 2\xi+2\
    {}^\sigma\eta=2\ {}^\sigma(\eta-\xi).
  \]
  Thus, $\ker Dm$ equals $\Fq:=\fp_\sigma\cap\fp_\tau$ which is
  embedded diagonally into $\fp_\sigma\times\Fq_\tau$. For $\xi\in\Fq$
  and $t\in\RR$ we have
  \[
    e^{t\xi}\cdot m(z)=e^{t\xi}\cdot1\cdot e^{-\sigma\tau(t\xi)}=1.
  \]
  Hence $\Fq m(z)=0$, i.e., $\Fq$ is parallel to the fiber
  $M_z:=m^{-1}(m(z))$. On the other hand $\Fq z=\Fq=\ker Dm$ which
  implies that $K_{m(z)}$ acts transitively on $M_z$. There was
  nothing particular about the point $z$ since every point
  $(aK^\sigma,bK^\tau)\in M$ has the form of $z$ after replacing
  $\sigma$, $\tau$ by their conjugates with $a$, $b$,
  respectively. This shows that the symplectic reductions
  $M_z/K_{m(z)}$ are discrete for all $z\in M$, i.e., $M$ is
  multiplicity free.
\end{proof}

Of special importance is the case when $\sigma$ and $\tau$ represent
the same element in $\|Out|(K)$. Then $\sigma\tau$ is inner and we
get:

\begin{corollary}

  Let $K$, $\sigma$, $\tau$ and $M$ as above. Assume that there is an
  element $u\in K$ with $\sigma\tau=\Ad(u)$. Then $M$ carries the
  structure of an (untwisted) \mf\ $K$-manifold such that the moment
  map is
  \[
    m:M\to K: (aK^\sigma,bK^\tau)\mapsto a({}^\sigma a^{-1})\ u\
    ({}^\tau b)b^{-1}
  \]

\end{corollary}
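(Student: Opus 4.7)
The plan is to deduce this corollary as a direct application of the twist-shifting \cref{lemma:shift} to the \qH\ $K(\sigma\tau)$-structure supplied by the previous theorem. The key observation is that the hypothesis $\sigma\tau = \Ad(u)$ says precisely that the twist $\sigma\tau$ is inner and realized by $u$, so \cref{lemma:shift} with shift element $u^{-1}$ converts $\sigma\tau$ into $\Ad(u^{-1})\circ(\sigma\tau) = \id_K$. Applied to the moment map of the previous theorem, this produces an (untwisted) \qH\ $K$-manifold with the same underlying manifold, $K$-action and $2$-form, whose moment map $\overline m$ is obtained from $m$ by right multiplication with $u$, i.e.\ $\overline m(x) = m(x)\cdot u$.

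The main work then reduces to verifying that this $\overline m$ agrees with the formula stated in the corollary. Starting from
\[
  \overline m(aK^\sigma,bK^\tau) = a({}^\sigma a^{-1})\,({}^\sigma b)\,({}^{\sigma\tau} b^{-1})\,u,
\]
the relation $\sigma\tau = \Ad(u)$ gives ${}^{\sigma\tau}b^{-1} = ub^{-1}u^{-1}$, which collapses the last three factors to $({}^\sigma b)\cdot u\cdot b^{-1}$. Using the same relation together with $\tau^2=\id$ one rewrites ${}^\sigma b = {}^{\sigma\tau\cdot\tau}b = u\cdot({}^\tau b)\cdot u^{-1}$, so $({}^\sigma b)\cdot u = u\cdot({}^\tau b)$ and the formula rearranges to the asserted
\[
  \overline m(aK^\sigma,bK^\tau) = a({}^\sigma a^{-1})\cdot u\cdot({}^\tau b)\,b^{-1}.
\]

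Multiplicity freeness is then inherited for free: \cref{lemma:shift} does not touch the manifold, the $K$-action, or the $2$-form, so the symplectic reductions of $(M,\omega,\overline m)$ coincide with those of $(M,\omega,m)$, which were shown to be zero-dimensional in the previous theorem. There is no substantial obstacle in this argument: once the twist-shifting lemma is in hand, the corollary is a purely formal consequence of the theorem, and the only non-trivial content is the brief algebraic rearrangement sketched above to match the specific form of the moment map formula claimed in the statement.
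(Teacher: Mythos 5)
Your argument is correct and is essentially the paper's own proof, just with the role of \cref{lemma:shift} spelled out explicitly: the paper performs the identical algebraic rearrangement $a\,({}^\sigma a^{-1})({}^\sigma b)({}^{\sigma\tau}b^{-1}) \rightsquigarrow a\,({}^\sigma a^{-1})\,u\,({}^\tau b)\,b^{-1}$ inside the extended group and then substitutes $\sigma\tau \leftrightarrow u$, which is the shift in disguise. The observation that \cref{lemma:shift} changes neither the manifold nor $\omega$, so multiplicity-freeness carries over, is also implicit in the paper and you are right to record it.
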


\begin{proof}

  Follows from (see \eqref{eq:disym1})
  \begin{multline}
    m(aK^\sigma,bK^\tau)=a\sigma a^{-1}b\tau b^{-1}=\\= a\cdot\sigma
    a^{-1}\sigma\cdot\sigma\tau\cdot\tau b\tau\cdot b^{-1}
    =a({}^\sigma a)^{-1}u\ ({}^\tau b)b^{-1}
  \end{multline}
\end{proof}

The orbit structure of disymmetric spaces has been thoroughly
investigated by Matsuki in a series of papers
\cites{Matsuki1,Matsuki2,Matsuki3}. In particular, he showed that the
orbit space $M/K$ can be identified with a polytope which is of course
our $\cP_M$. Its shape is controlled by a Weyl group which is closely
related to our $W_M$. In fact, it is possible to derive our invariants
$\cP_M$, $\Lambda_M$, and $\Phi_M$ from Matsuki's calculations. Since
the results do not really fit into the present paper, details will
appear elsewhere.

Nevertheless, to show the idea, we will give one instructive example
without proofs, namely where $K=SU(2n)$ with $n\ge2$ and $\sigma$ and
$\tau$ are the involutions defining the subgroups $K^\sigma=SO(2n)$
and $K^\tau=Sp(2n)$, respectively. It is well known that
\[
  \cA=\{(x_1,\ldots,x_{2n})\mid x_1\ge\ldots\ge x_{2n}\ge x_1-1\text{
    and }x_1+\ldots+x_{2n}=0\}
\]
is the fundamental alcove for $SU(2n)$. The simple roots are
\[
  \alpha_1:=x_1-x_2,\ldots, \alpha_{2n-1}=x_{2n-1}-x_{2n},
  \alpha_{2n}=x_{2n}-x_1+1=\alpha_0
\]
It follows from Matsuki's calculations that the momentum image of $M$
is
\[
  \cP_M=\bigg\{(y_1+\viertel,\ldots,y_n+\viertel,
  y_1-\viertel,\ldots,y_n-\viertel) \left|
    \begin{matrix}y_1\ge\ldots\ge y_n\ge y_1-\half\\
      y_1+\ldots+y_n=0\end{matrix}\right\}
\]
which implies
\[
  \fa_M=\{x_1-x_{n+1}=\ldots=x_n-x_{2n}=\half,\ x_1+\ldots+x_{2n}=0\}
\]
The simple roots of $\Phi_M$ are
\[
  \sigma_1=\alpha_1+\alpha_{n+1},\ldots,\sigma_n=\alpha_n+\alpha_{2n}.
\]
which shows that $\Phi_M$ is an affine root system of type
$\sA_{n-1}^{(1)}$.  The character group $\Lambda_M$ of $M$ is just the
weight lattice of $\VPhi_M$ in $\Vfa_M$. The point $x\in\cP_M$ with
coordinates $y_1=\ldots=y_n=0$ is a vertex of $\cP_M$. Since all
simple roots of $K$ except for $\alpha_0$ and $\alpha_n$ vanish in $x$
the centralizer of $\exp(x)$ in $K$ is the Levi subgroup
$L=S(U(n)\times U(n))$. Its complexification is
$L_\CC=S(GL(n,\CC)\times GL(n,\CC))$. So the local model $X$ in $x$
must have the weight monoid $\Lambda_X^+=\cC_x\cP_M\cap\Lambda_M$. Let
$H=GL(n,\CC)$ be embedded into $L_\CC$ via
$g\mapsto(g,(g^t)^{-1})$. Then one checks that $L_\CC/H$ has indeed
the desired weight monoid and therefore equals $X$.  Observe that
$X=SL(n,\CC)$ with $L_\CC$-action $(g_1,g_2)g=g_1gg_2^t$. The big
local model in $x$ is
\[
  Z:=SU(2n)\times^LX.
\]
The other vertices are all conjugate under the center of $K$ and
therefore have isomorphic local models. The upshot is that
$M=SU(2n)/SO(2n)\times SU(2n)/Sp(2n)$ can be obtained by gluing $n$
copies of $Z$ but in a non-holomorphic way.

Another instance of a disymmetric manifold is the double $D(K_0)$ of a
connected compact Lie group (see also the proof of
\cref{prop:local-qH}). Put $K:=K_0\times K_0$ and let $\tau$ be the
switching automorphism. Then $K^\tau$ is the diagonal in $K$ and
$K/K^\tau=K_0$ is a $\tau$-twisted conjugacy class. Therefore,
$D(K_0)=K_0\times K_0$ is a \mf\ $K$-manifold with diagonal action
\[
  (x,y)*(a,b)=(xay^{-1},xby^{-1})
\]
and moment map
\[
  m(a,b)=(ab^{-1},a^{-1}b).
\]
After the coordinate change
\[
  D(K_0)\to D(K_0):(a,b)\mapsto(a,b^{-1})
\]
this is precisely the double in the sense of \cite{AMM}*{\S3.2}. Let
$\cA_0$ and $P_0$ be the alcove and the weight lattice of $K_0$. Then
$\cA=\cA_0\times\cA_0$ and $P=P_0\oplus P_0$ are alcove and weight
lattice of $K$. Let, moreover, $\delta(\chi):=-w_0(\chi)$ be the
opposition endomorphism of $\fa_0$ where $w_0$ is longest element of
the Weyl group of $K_0$. Then
\[
  (\cP_{D(K_0)},\Lambda_{D(K_0)})=
  (\id\times\delta)(\cA_0,P_0)\subseteq(\cA,P)
\]
are the data of $D(K_0)$.

\begin{remark}

  Already the case $M=D(SU(2))$ exhibits a new phenomenon namely that
  the classification of multiplicity free manifolds depends on the
  choice of the invariant scalar product on $\fk$. To see this,
  observe that the alcove $\cA$ for $K=SU(2)\times SU(2)$ is a
  rectangle whose side lengths depend on the metric. Let $\cP$ be a
  diagonal of $\cA$. Then in order to be spherical, $\cP$ has to be
  parallel to the sum $\alpha+\alpha'$ of the simple roots. This holds
  if and only if $\cA$ is a square, i.e., the metrics on both factors
  of $K$ are the same.

\end{remark}

2. Groups of rank $1$

Let $K=SU(2)$. Then $\cA$ is an interval and $\cP\subseteq\cA$ is a
subinterval. If $\cP_M\ne\cA$ then the discussion in Remark \emph{ii)}
after \cref{thm:cross} shows that $M$ is the exponential of a
Hamiltonian manifold. Quasi-Hamiltonian manifolds which are not of
this form will be called \emph{genuine}. So a genuine \mf\
$SU(2)$-manifold necessarily has $\cP_M=\cA$. The possible local
models in the end points are the $SL(2,\CC)$-varieties $\CC^2$,
$SL(2,\CC)/\CC^*$ and $SL(2,\CC)/N(\CC^*)$. Accordingly, there are $3$
different genuine \mf\ $SU(2)$-manifolds namely:

\begin{itemize}

\item $\Lambda_M=P$ ($\cong\ZZ$, the weight lattice of $SU(2)$). Here
  $M$ is obtained by gluing two copies of $\CC^2$ and is therefore
  diffeomorphic to the $4$-sphere $S^4$. This example has been found
  by by Alekseev-Meinrenken-Woodward \cite{AMW} under the name
  ``spinning $4$-sphere''.

\item $\Lambda_M=2P$. Then $M$ is the disymmetric manifold
  $M=S^2\times S^2$.

\item $\Lambda_M=4P$. In this case, $M$ is the quotient of the
  previous one by the switching involution. Hence $M\cong \P^2(\CC)$.

\end{itemize}

There is another affine root system of rank $1$ namely $\sA_2(2)$. It
belongs to $K=SU(3)$ and an outer automorphism $\tau$ of $K$,
e.g. complex conjugation. The alcove $\cA$ is an interval and the two
roots $\alpha_0$, $\alpha_1$ satisfy $\Valpha_0=-2\Valpha_1$. The
weight lattice is $P=\ZZ\Valpha_1$. The centralizers corresponding to
the end points are $SU(2)$ and $SO(3)$, respectively. Let
$\cP_M=\cA$. Then a discussion as above yields two cases

\begin{itemize}

\item $\Lambda_M=P$: In this case, the local models are $\CC^2$ and
  $SO(3,\CC)/SO(2,\CC)$.

\item $\Lambda_M=2P$. In this case, the local models are
  $SL(2,\CC)/SO(2,\CC)$ and $SO(3,\CC)/O(2,\CC)$.

\end{itemize}

Note that $\Lambda_M=4P$ does not work since
$\Lambda_X^+=\ZZ_{\ge0}(4\Valpha_1)$ is not the weight monoid of any
\emph{smooth} $SO(3,\CC)$-variety.

3. Manifolds of rank $1$

The spinning $4$-sphere has been generalized in \cite{HJS} by
Hurtubise-Jeffrey-Sjamaar to that of a spinning $2n$-sphere. In our
terms it can be constructed as follows: let $K=SU(n)$. Then the alcove
$\cA$ has $n$ vertices namely $x_0=0$ and the fundamental weights
$x_i=\omega_i$, $i=1,\ldots,n-1$. Let $\cP$ be the edge joining $x_0$
and $x_1$. Let $\Lambda=\ZZ\omega_1$. Then $(\cP,\Lambda)$ with
$\Lambda=\ZZ\omega_1$ is a spherical pair. Indeed, the weights of the
smooth affine spherical $SL(n,\CC)$-variety $X=\CC^n$ form the monoid
$\ZZ_{\ge0}\omega_1$. This shows that it is a local model at the
vertex $x_0$. The situation in $x_1$ is similar: the centralizer is
still $L=K=SU(n)$ but the simple root system is different, namely
$\alpha_2,\alpha_3,\ldots,\alpha_{n-1},\alpha_n=\alpha_0$. The last
fundamental weight with respect to this system is
$-\omega_1$. Therefore the monoid
$C_{x_1}\cP\cap\Lambda=\ZZ_{\ge0}(-\omega_1)$ has a model, as well,
namely again $\CC^n$. Glued together this yields the spinning
$2n$-sphere.

Eshmatov, \cite{Eshmatov}, has found an analogue of the spinning
$2n$-sphere for the symplectic group. More precisely, he showed that
$M=\P(\HH^n)$ carries the structure of a \mf\ $Sp(2n)$-manifold. Using
our theory, this structure can be obtained as follows. Let $K=Sp(2n)$
and let $\epsilon_1,\ldots,\epsilon_n$ be the standard basis of the
Cartan subalgebra $\ft$. Let $\cP$ be the line segment joining the
origin $x_0=0$ with $x_1=\half\epsilon_1$. This is an edge of the
fundamental alcove $\cA$. Put $\Lambda:=\ZZ\epsilon_1$. Then
\[
  X_1=\CC^{2n}
\]
is an (even big) local model in $x_0$. The other endpoint $x_1$
behaves differently, though. In this case the simple roots of $L$ are
$\alpha_0,\alpha_2,\alpha_3,\ldots,\alpha_n$ which yields
$L=Sp(2)\times Sp(2n-2)$. Moreover $-\omega_1$ is now the fundamental
weight of the first factor of $L$. Thus, the smooth affine spherical
variety with character group $\ZZ_{\ge0}(-\omega_1)$ is simply $\CC^2$
with the second factor of $L$ acting trivially. As a big local model
at $x_1$ we obtain
\[
  X_2=Sp(2n)\Times^{Sp(2)\times Sp(2n-2)}\CC^2.
\]
Now Eshmatov's space is obtained by gluing $X_1$ and $X_2$.

This example has been further generalized by Paulus. We keep
$K=Sp(2n)$. Then the vertices of $\cA$ are
$x_k:=\sum_{i=1}^k\epsilon_k$ for $k=0,\ldots,n$. Fix $k$ with $k>0$
and let $\cP_k$ be the line segment joining $x_{k-1}$ and $x_k$. Let
moreover $\Lambda_k:=\ZZ\epsilon_k$. Then one shows as above that
$(\cP_k,\Lambda_k)$ is spherical and it is even possible to identify
the corresponding manifold:

\begin{theorem}

  Let $n,k$ be integers with $1\le k\le n$. Then there is a \mf\
  $Sp(2n)$-manifold structure on the quaternionic Grassmannian
  $M=\Gr_k(\HH^{n+1})$ with $(\cP_M,\Lambda_M)=(\cP_k,\Lambda_k)$.

\end{theorem}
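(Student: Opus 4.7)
The plan is to invoke the main classification theorem, \cref{thm:main}, which reduces the problem to verifying that $(\cP_k,\Lambda_k)$ is a spherical pair and then identifying the resulting convex \mf\ manifold with $\Gr_k(\HH^{n+1})$.

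First I would verify sphericality of the pair. Since $\cP_k$ is a line segment, polyhedrality is automatic, sphericality on the interior is trivial (the isotropy there is a torus), and only the two endpoints $x_{k-1}$ and $x_k$ require attention. At $x_{k-1}$, the centralizer is the Levi $L_{k-1}=Sp(2(k-1))\times Sp(2(n-k+1))$, obtained by deleting the $(k-1)$-st node from the extended $\sC_n$ Dynkin diagram. The intersection $C_{x_{k-1}}\cP_k\cap\Lambda_k=\ZZ_{\ge0}\epsilon_k$, and $\epsilon_k$ is the highest weight of the defining representation of the $Sp(2(n-k+1))$-factor. Hence the $L_{k-1}$-module $X_1=\CC^{2(n-k+1)}$, with $Sp(2(k-1))$ acting trivially, is a smooth affine spherical local model with the required weight monoid. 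At $x_k$, the centralizer is $L_k=Sp(2k)\times Sp(2(n-k))$; the simple roots of the $Sp(2k)$-factor are $\alpha_0,\alpha_1,\ldots,\alpha_{k-1}$, with $\alpha_0$ long and $\alpha_{k-1}$ short. A short calculation identifies the fundamental weight dual to $\alpha_{k-1}$ as $-\epsilon_k$, so $X_2=\CC^{2k}$ (defining representation of $Sp(2k)$, with $Sp(2(n-k))$ trivial) is a smooth affine spherical local model with weight monoid $\ZZ_{\ge0}(-\epsilon_k)=C_{x_k}\cP_k\cap\Lambda_k$.

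By \cref{thm:main} there is then a unique convex \mf\ $Sp(2n)$-manifold $M$ realizing $(\cP_k,\Lambda_k)$, and $M$ is compact because $\cP_k$ is closed in $\cA$.

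The remaining step is to identify $M$ with $\Gr_k(\HH^{n+1})$. I would embed $Sp(2n)\hookrightarrow Sp(2(n+1))$ as the stabilizer of a quaternionic line $\HH v\subseteq\HH^{n+1}$ and analyze the $Sp(2n)$-orbit decomposition of $\Gr_k(\HH^{n+1})$ by the relative position of a subspace to $\HH v$. The open dense orbit consists of $k$-planes transverse to $\HH v$ and is isomorphic to the big local model $Sp(2n)\times^{L_k}X_2$ constructed above, while the closed orbit $\Gr_{k-1}(\HH^n)=Sp(2n)/L_{k-1}$ of $k$-planes containing $\HH v$ appears as the zero section of the other big local model $Sp(2n)\times^{L_{k-1}}X_1$. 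Gluing these two pieces reconstructs the unique $M$ produced by \cref{thm:main}, which must therefore be $\Gr_k(\HH^{n+1})$. The principal obstacle is making this identification rigorous at the level of group-valued moment maps: one cannot restrict a \qH\ moment map from the transitive $Sp(2(n+1))$-action down to $Sp(2n)$, so the moment map has to be assembled locally from the two models via \cref{prop:local-qH} and \cref{thm:cross}, with the uniqueness statement of \cref{thm:main} used to close the loop.
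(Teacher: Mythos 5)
Your proposal follows the same overall strategy as the paper: check sphericality at the two endpoints $x_{k-1}$, $x_k$ via the local models $\CC^{2(n-k+1)}$ and $\CC^{2k}$, invoke \cref{thm:main} for existence and uniqueness, and then identify the resulting manifold with $\Gr_k(\HH^{n+1})$ by exhibiting the latter as a $K$-equivariant gluing of the two big local models $Sp(2n)\times^{L_{k-1}}\CC^{2(n-k+1)}$ and $Sp(2n)\times^{L_k}\CC^{2k}$. That is exactly the paper's route.

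Two points deserve comment. First, a terminology slip: what you call the ``open dense orbit'' of $k$-planes transverse to $\HH v$ is not a single $Sp(2n)$-orbit but an open $Sp(2n)$-invariant subset (it contains the full one-parameter family of generic orbits plus the closed orbit $\Gr_k(\HH^n)$ at the other endpoint); the identification with $Sp(2n)\times^{L_k}\CC^{2k}$ is as open subsets, not orbits. Second, and more substantively, the final step is where your argument is less crisp than the paper's. You correctly identify the obstacle that one cannot restrict the $Sp(2n+2)$-valued moment map to $Sp(2n)$, but your remedy of ``assembling the moment map locally and closing the loop with the uniqueness of \cref{thm:main}'' isn't quite the right tool: that uniqueness statement compares two \emph{quasi-Hamiltonian} manifolds with the same invariants, so invoking it presupposes you already have a \qH\ structure on $\Gr_k(\HH^{n+1})$, which is what you are trying to produce. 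The paper instead finishes with a purely equivariant-topological rigidity statement: since $M$ and $\Gr_k(\HH^{n+1})$ are both obtained by gluing the same two cohomogeneity-one pieces $X_1$, $X_2$, and there is only one $K$-equivariant way to glue such pieces (citing Grove--Wilking--Ziller), the two are $K$-diffeomorphic, and the \qH\ structure is then transported across this diffeomorphism. Your sketch would be complete once you replace the appeal to \cref{thm:main}-uniqueness in the gluing step by such a rigidity argument for equivariant gluings.
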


\begin{proof}

  The big local models at $x_{k-1}$ and $x_k$, respectively are the
  spaces
  \[
    X_1:=Sp(2n)\times^{H_{k-1}}\CC^{2n-2k+2}\text{ and }
    X_2:=Sp(2n)\times^{H_k}\CC^{2k}
  \]
  where $H_k:=Sp(2k)\times Sp(2n-2k)\subseteq Sp(2n)$ and they glue to
  a \mf\ manifold $M$. Now recall that $Sp(2n)$ can also be
  interpreted as the unitary group of $\HH^n$. Then $H_k$ is the
  isotropy group of $\HH^k\subseteq\HH^n$. Therefore $X_2$ can be
  identified with the universal bundle $\widetilde{\Gr}_k(\HH^n)$ over
  the quaternionic Grassmannian $\Gr_k(\HH^n)$. Similarly, $X_1$ is
  isomorphic to $\widetilde{\Gr}_{n-k+1}(\HH^n)$. Now consider the
  space $\HH^{n+1}=\HH^n\oplus\HH$ where $K$ acts on the first
  factor. Let $e:=(0,1)$ be the fixed point. Each element of
  $\widetilde{\Gr}_k(\HH^n)$ can be interpreted as a pair $(L,v)$ with
  $L\in\Gr_k(\HH^n)$ and $v\in L$. Let
  $\Gamma_{L,v}\subseteq\HH^n\oplus\HH$ be the graph of the map
  $L\to\HH:u\mapsto\<u,v\>$. Then the map $(L,v)\mapsto\Gamma_{L,v}$
  identifies $X_2=\widetilde{\Gr}_k(\HH^n)$ with the open subset of
  all $\tilde L\in\Gr_k(\HH^{n+1})$ with $e\not\in \tilde
  L$. Similarly, $X_1$ be identified with the set of all
  $\tilde L\in\Gr_k(\HH^{n+1})$ with $e\not\in\tilde L^\perp$. So
  $\Gr_k(\HH^{n+1})$ is also obtained by gluing $X_1$ and $X_2$. It is
  easy to see that there is only one $K$-equivariant way to do that,
  so $M\cong\Gr_k(\HH^{n+1})$ (see e.g.\ \cite{GWZ}).
\end{proof}

\begin{remark}

  The complex Grassmannians $\Gr_k(\CC^{n+1})$ are \mf, as well,
  namely for $K=SU(n)$. But they are not genuine, i.e., they are
  ``exponentials'' of (ordinary) Hamiltonian manifolds.

\end{remark}

5. Surjective moment maps

It is interesting to look at \mf\ manifolds $M$ which are in a sense
as big as possible. That means, first of all, that $\cP_M$ is the
entire alcove. This is clearly equivalent to the moment map being
surjective. There are quite a few of them, most of which are
disymmetric. Therefore we also demand that $\Lambda_M$ be as big as
possible, i.e., equals the weight lattice $P$ of $\VPhi$. This is
equivalent to the $K$-action on $M$ being free.

\begin{theorem}

  Let $(K,\tau)$ be one of the following three cases:
  \[
    (SU(n),\id),\quad(Sp(2n),\id),\quad (SU(2n+1),k\mapsto\overline k)
  \]
  (the last $\tau$ is complex conjugation). Then $(\cA,P)$ is
  spherical, i.e., there is a unique \mf\ $K\tau$-manifold $M$ whose
  moment map is surjective and such that $K$ acts freely on $M$.

\end{theorem}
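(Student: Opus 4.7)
The plan is to apply \cref{thm:main}. The two conditions on $M$ translate directly to $(\cP_M, \Lambda_M) = (\cA, P)$: surjectivity of the moment map means $\cP_M = \cA$, while the $K$-action being generically free (i.e., the principal isotropy $L_0$ being trivial) is equivalent to $A_M = L/L_0$ equaling the maximal torus $T$, hence to $\Lambda_M = \Xi(T) = P$. Uniqueness of $M$, and its compactness (as $\cA$ is automatically closed), then follow immediately from \cref{thm:main}. What remains is to verify that the pair $(\cA, P)$ is spherical.

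Since sphericality is a local condition, by the discussion in the Introduction and the remark after \cref{thm:cross}, it suffices to check at representatives of the minimal faces of $\cA$, namely at each vertex $x$. At such a vertex, \cref{thm:conjugacyclasses} identifies the centralizer $L = K(x)$, and the tangent cone $C_x\cA$ is precisely the Weyl chamber of $L$ with respect to the simple system obtained by removing the one simple reflection of $S_\tau$ that does not fix $x$. Consequently, $C_x\cA \cap P = P^+_L$, the full monoid of dominant weights of $L$, and we must exhibit a smooth affine spherical $L_\CC$-variety $X$ with $\Lambda^+_X = P^+_L$.

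The three cases are handled as follows. For $(SU(n), \id)$, the center $\ZZ/n$ of $SU(n)$ acts transitively on the vertices of $\cA$ by translation, reducing the task to producing a single smooth affine spherical $SL(n,\CC)$-variety whose coordinate ring realizes every irreducible $SL(n,\CC)$-representation exactly once. For $(Sp(2n), \id)$, the vertex $x_k$ has centralizer $L = Sp(2k) \times Sp(2n-2k)$ (obtained by deleting $\alpha_k$ from the affine diagram of type $\widetilde C_n$), and an explicit calculation shows $C_{x_k}\cA \cap P = P^+_L$. For $(SU(2n+1), \tau)$ with $\tau$ complex conjugation, whose affine root system is of type $A_{2n}^{(2)}$, the centralizer at each vertex is read off similarly from the twisted affine diagram. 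In each case, existence of a smooth affine spherical $L_\CC$-variety with the required weight monoid $P^+_L$ is obtained from the classification in \cite{KVS}, whose weight monoids are described in \cite{KPVS}.

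The main technical obstacle is the case-by-case verification that a smooth affine spherical variety with the maximal weight monoid $P^+_L$ appears in the Knop--Van~Steirteghem classification at every vertex of $\cA$, for exactly the three pairs $(K,\tau)$ in the statement. Once sphericality is established at every vertex, the gluing result \cref{T1} (vanishing of the higher cohomology of $\fL^{\Phi}_{\cP,\Lambda}$) assembles the local models into the unique global \mf\ $K\tau$-manifold $M$ with $(\cP_M, \Lambda_M) = (\cA, P)$.
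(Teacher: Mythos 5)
Your outline correctly captures the structure of the paper's argument: reduce to checking sphericality at the vertices of $\cA$ (using that sphericality is local in $\cP$), identify at each vertex $x_i$ the centralizer $L = K(x_i)$ from the affine Dynkin diagram, note that $C_{x_i}\cA\cap P$ is the full dominant-weight monoid $P^+_L$, and then ask for a smooth affine spherical $L_\CC$-variety with that weight monoid. The translation of the hypotheses into $(\cP_M,\Lambda_M)=(\cA,P)$ is also fine (with the minor slip that at a generic interior point the centralizer is $S=\exp\fa$, not the full maximal torus $T$, so it is $\Lambda_\tau$ rather than $\Xi(T)$ that appears — but these agree in the untwisted case and are both equal to $P$).

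However, the proposal stops precisely where the paper's proof actually begins: the entire mathematical content of the statement is the exhibition of concrete smooth affine spherical $L_\CC$-varieties with weight monoid $P^+_L$, and you defer this to ``the classification in \cite{KVS}'' without producing the witnesses. The paper names them explicitly: for $SL(n,\CC)$ the variety $X_n = SL(n,\CC)\times^{Sp(n,\CC)}\CC^n$ ($n$ even) or $SL(n,\CC)/Sp(n-1,\CC)$ ($n$ odd); for $Sp(2n,\CC)$ the variety $Y_n = Sp(2n,\CC)\times^{Sp(n,\CC)\times Sp(n,\CC)}\CC^n$ (resp.\ $Sp(2n,\CC)\times^{Sp(n-1,\CC)\times Sp(n+1,\CC)}\CC^{n+1}$ for $n$ odd), with the products $Y_{i,n-i}$ serving at the other vertices; and for the twisted case the variety $Z_n = SO(2n+1,\CC)/GL(n,\CC)$, whose coordinate ring is known to contain every irreducible $SO(2n+1,\CC)$-module once, yielding the models $Z_{i,n-i}$. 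It is also the verification that these (and only these simply connected groups) admit such a variety that makes the list of three pairs correct. As written, your proposal is a reduction to the main task, not a proof of it; to complete it you would need to actually extract these varieties from the classification and check their weight monoids, which is exactly what the paper does.
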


\begin{proof} It suffices to find a local model in each of the
  vertices $x$ of $\cA$. For that, each case will be treated
  separately.

  $(K,\tau)=(SU(n),\id)$: We start with $x=0\in\cP=\cA$. Then $L=K$
  and $C_x\cA$ is the dominant Weyl chamber. Therefore, we have to
  show that there is a smooth affine $SL(n,\CC)$-variety $X_n$ such
  that $\cO(X)=\bigoplus_\chi L(\chi)$ where $\chi$ runs through all
  dominant weights. Such a variety does in general not exist for an
  arbitrary reductive group but it does for $SL(n,\CC)$ namely
  \[
    X_n:=\begin{cases}
      SL(n,\CC)\Times^{Sp(n,\CC)}\CC^n&\text{if $n$ is even},\\
      SL(n,\CC)/Sp(n-1,\CC)&\text{if $n$ is odd}.
    \end{cases}
  \]
  Thus $(\cA,\Lambda)$ is spherical in $x=0$. But then it is also
  spherical in all other vertices since they differ only in a
  translation by an element of the center.

  $(K,\tau)=(Sp(2n),\id)$: A model in $x=0$ is
  \[
    Y_n:=\begin{cases}
      Sp(2n,\CC)\Times^{Sp(n,\CC)\times Sp(n,\CC)}\CC^n&\text{if $n$ is even},\\
      Sp(2n,\CC)\Times^{Sp(n-1,\CC)\times
        Sp(n+1,\CC)}\CC^{n+1}&\text{if $n$ is odd}.
    \end{cases}
  \]
  In general, $\cA$ has $n+1$ vertices $x_0=0,x_1,\ldots,x_n$ which
  are enumerated in such a way that $\alpha_i(x_i)\ne0$. Then the
  centralizer in $L$ in $x_i$ is $L=Sp(2i)\times Sp(2n-2i)$. Since
  $\Lambda=\ZZ^n=\ZZ^i\oplus\ZZ^{n-i}$ splits accordingly, the
  varieties
  \[
    Y_{i,n-i}:=Sp(2n)\Times^{Sp(2i)\times Sp(2n-2i)}(Y_i\times
    Y_{n-i})
  \]
  are the big local models in $x_i$.

  $(K,\tau)=(SU(2n+1),\tau)$ with $\tau$ an outer automorphism: Here,
  the Dynkin diagram of $(K,\tau)$ is of type $\sA_{2n}^{(2)}$ (see
  \eqref{eq:A2n1}). In this case, $\cA$ has $n+1$ vertices
  $x_0,\ldots,x_n$ such that the centralizer of $x_i$ is
  $L=Sp(2i)\times SO(2n+1-2i)$. It is well-known that the coordinate
  ring of
  \[
    Z_n:=SO(2n+1,\CC)/GL(n,\CC)
  \]
  contains all irreducible $SO(2n+1,\CC)$-modules exactly once. So
  \[
    Z_{i,n-i}:=SU(2n+1)\Times^{Sp(2i)\times SO(2n+1-2i)}(Y_i\times
    Z_{n-i})
  \]
  is a big local model in $x_i$.
\end{proof}

\begin{remarks}

  1. Using the spherical roots of the local models it is easy to
  determine the root system $\Phi_M$ in each case:

  Untwisted $SU(n)$: The simple affine roots of $K$ are
  \[
    \alpha_0=1+x_n-x_1,\alpha_1=x_1-x_2,\ldots,\alpha_{n-1}=x_{n-1}-x_n.
  \]
  The spherical roots of $X_n$ are
  $\alpha_1+\alpha_2,\alpha_2+\alpha_3,\ldots,\alpha_{n-2}+\alpha_{n-1}$. For
  $n$ odd, see \cite{BraviPezzini}. The even case is handled in
  \cite{KPVS} or, in this case, in \cite{Luna}. Therefore, the simple
  roots of $\Phi_M$ are
  \[
    1+x_n-x_2,x_1-x_3,x_2-x_4,\ldots,x_{n-2}-x_n,1+x_{n-1}-x_1.
  \]
  Hence
  \[
    \Phi_M\cong
    \begin{cases}
      \sA_{\frac n2-1}^{(1)}\times\sA_{\frac n2-1}^{(1)}&\text{$n$ even,}\\
      \vrule height 14pt width 0pt\sA_{n-1}^{(1)}&\text{$n$ odd.}\\
    \end{cases}
  \]
  Observe that in the odd case the root systems of $K$ and $M$ are
  isomorphic but they are not the same. For example, for $n=3$, i.e.,
  $K=SU(3)$, one gets this picture:
  \[\label{eq:fig1}
    \begin{tikzpicture}[scale=0.2,line cap=round,line
      join=round,>=triangle 45,x=1.0cm,y=1.0cm]
      \fill[fill=black,fill opacity=0.2] (0.,0.) -- (6.,0.) --
      (3.,5.196152422706632) -- cycle; \draw (0.,0.)-- (6.,0.); \draw
      (6.,0.)-- (3.,5.196152422706632); \draw (3.,5.196152422706632)--
      (0.,0.); \draw [dash pattern=on 3pt off 3pt] (-4.32,5.2)--
      (10.56,5.196152422706632); \draw [dash pattern=on 3pt off 3pt]
      (9.762262054223738,6.516429029303968)--
      (2.19943667558704,-6.582768775266124); \draw [dash pattern=on
      3pt off 3pt] (3.785563324412959,-6.556788013152589)--
      (-3.6393197678831375,6.303486742963368);
    \end{tikzpicture}
  \]
  where the gray triangle denotes $\cP=\cA$ and the axes of the simple
  reflections of $\Phi_M$ are marked by dashed lines. There is also
  something to be observed in the even case: here all roots of
  $\Phi_M$ are perpendicular to the vector
  $\delta=(1,-1,\ldots,1,-1)\in\Vfa$. This means that the automorphism
  group of $M$ contains the $1$-dimensional torus
  $\exp(\RR\delta)\subseteq A$, i.e., the $SU(n)$-action on $M$
  extends to an $U(1)\times SU(n)$-action.

  $Sp(2n)$: The simple affine roots of $K$ are
  \[
    \alpha_0=1-2x_1,\alpha_1=x_1-x_2,\ldots,\alpha_{n-1}=x_{n-1}-x_n,\alpha_n=2x_n.
  \]
  The spherical roots of $Y_n$ are
  $\alpha_1+\alpha_2,\alpha_2+\alpha_3,\ldots,\alpha_{n-1}+\alpha_n$
  (see \cites{KPVS,Luna}). Therefore, the simple roots of $\Phi_M$ are
  \[
    1-x_1-x_2,x_1-x_3,x_2-x_4,\ldots,x_{n-2}-x_n,x_{n-1}+x_n.
  \]
  Hence
  \[
    \Phi_M\cong\sA_{n-1}^{(1)}
  \]
  For $K=Sp(4)$ one gets
  \[\label{eq:fig2}
    \begin{tikzpicture}[scale=0.2,line cap=round,line
      join=round,>=triangle 45,x=1.0cm,y=1.0cm]
      \fill[fill=black,fill opacity=0.2] (0.,0.) -- (6.,0.) -- (6.,6.)
      -- cycle; \draw (0.,0.)-- (6.,0.); \draw (6.,0.)-- (6.,6.);
      \draw (6.,6.)-- (0.,0.); \draw [dash pattern=on 3pt off 3pt]
      (-3.15,3.15)-- (4.51,-4.51); \draw [dash pattern=on 3pt off 3pt]
      (2.85,9.15)-- (10.51,1.49);
    \end{tikzpicture}
  \]
  Since there is again a $W_M$-invariant vector in $\Vfa$, namely
  $\delta=(1,-1,1,-1,\ldots)$ the $Sp(n)$-action on $M$ extends to an
  $U(1)\times Sp(n)$-action.

  Twisted $SU(2n+1)$: The simple affine roots of $K,\tau$ are
  \[
    \alpha_0=\half-2x_1,\alpha_1=x_1-x_2,\ldots,\alpha_{n-1}=x_{n-1}-x_n,\alpha_n=x_n.
  \]
  According \cite{BraviPezzini}, the spherical roots of $Z_n$ are
  $\alpha_1+\alpha_2,\alpha_2+\alpha_3,\ldots,\alpha_{n-1}+\alpha_n,\alpha_n$
  but here one has to be careful since the normalization of
  spherical roots is different form ours: in \cite{BraviPezzini} all
  roots are always primitive in the weight lattice while our roots are
  always ``as long as possible''. See \cite{BvS} for further
  details. This amounts to changing the last root $\alpha_n$ to
  $2\alpha_n$.  From this, we get the the simple roots of $\Phi_M$ as
  \[
    \half-x_1-x_2,x_1-x_3,x_2-x_4,\ldots,x_{n-2}-x_n,x_{n-1},2x_n
  \]
  Hence
  \[
    \Phi_M\cong\sA_{2n}^{(2)}.
  \]
  The fact that $\Phi_M$ has the same type as the root system of
  $(K,\tau)$ seems to be coincidental. For $K=SU(5)$ one gets
  \[\label{eq:fig3}
    \begin{tikzpicture}[scale=0.2,line cap=round,line
      join=round,>=triangle 45,x=1.0cm,y=1.0cm]
      \fill[fill=black,fill opacity=0.2] (0.,0.) -- (6.,0.) -- (6.,6.)
      -- cycle; \draw (0.,0.)-- (6.,0.); \draw (6.,0.)-- (6.,6.);
      \draw (6.,6.)-- (0.,0.); \draw [dash pattern=on 3pt off 3pt]
      (0.,13.)-- (0.,-1.); \draw [dash pattern=on 3pt off 3pt]
      (-0.7071067811865475,12.707106781186548)--
      (12.70710678118655,-0.7071067811865497); \draw [dash pattern=on
      3pt off 3pt] (13.,0.)-- (-1.,0.);
    \end{tikzpicture}
  \]

  2. Paulus, \cite{Paulus}, worked out a complete list of \mf\
  manifolds (possibly twisted) with surjective moment map.

\end{remarks}

6. Inscribed triangles

For the last example, we were toying around with triangles inscribed
in an triangular alcove. Here are some examples of spherical pairs
$(\cP,\Lambda)$:
\[
  \begin{array}{|l|c|c|c|c|c|}
    \hline
    K&SU(3)&SU(3)&Sp(4)&Sp(4)&\sG_2\\
    \hline
    \begin{matrix}\cP\subseteq\cA\\\ \\\ \end{matrix}&\begin{tikzpicture}[scale=0.3,line cap=round,line join=round,>=triangle 45,x=1.0cm,y=1.0cm]
      \fill[fill opacity=0.2] (3.,0.) -- (4.5,2.598076211353316) --
      (1.5,2.598076211353316) -- cycle; \draw (0.,0.)-- (6.,0.); \draw
      (6.,0.)-- (3.,5.196152422706632); \draw (3.,5.196152422706632)--
      (0.,0.); \draw (3.,0.)-- (4.5,2.598076211353316); \draw
      (4.5,2.598076211353316)-- (1.5,2.598076211353316); \draw
      (1.5,2.598076211353316)-- (3.,0.); \draw
      (1.5,2.598076211353316)-- (3.,0.); \draw (3.,0.)--
      (4.5,2.598076211353316); \draw (4.5,2.598076211353316)--
      (1.5,2.598076211353316);
    \end{tikzpicture}
           &\begin{tikzpicture}[scale=0.3,line cap=round,line join=round,>=triangle 45,x=1.0cm,y=1.0cm]
             \fill[fill opacity=0.2] (4.,0.) --
             (4.,3.464101615137755) -- (1.,1.7320508075688774) --
             cycle; \draw (0.,0.)-- (6.,0.); \draw (6.,0.)--
             (3.,5.196152422706632); \draw (3.,5.196152422706632)--
             (0.,0.); \draw (4.,0.)-- (4.,3.464101615137755); \draw
             (4.,3.464101615137755)-- (1.,1.7320508075688774); \draw
             (1.,1.7320508075688774)-- (4.,0.); \draw
             (1.,1.7320508075688774)-- (4.,0.); \draw (4.,0.)--
             (4.,3.464101615137755); \draw (4.,3.464101615137755)--
             (1.,1.7320508075688774);
           \end{tikzpicture}
                 &\begin{tikzpicture}[scale=0.3,line cap=round,line join=round,>=triangle 45,x=1.0cm,y=1.0cm]
                   \fill[fill opacity=0.2] (3.,0.) -- (6.,3.) --
                   (3.,3.) -- cycle; \draw (0.,0.)-- (6.,0.); \draw
                   (6.,0.)-- (6.,6.); \draw (6.,6.)-- (0.,0.); \draw
                   (3.,0.)-- (6.,3.); \draw (6.,3.)-- (3.,3.); \draw
                   (3.,3.)-- (3.,0.); \draw (3.,0.)-- (6.,3.); \draw
                   (6.,3.)-- (3.,3.); \draw (3.,3.)-- (3.,0.);
                 \end{tikzpicture}
                       &\begin{tikzpicture}[scale=0.3,line cap=round,line join=round,>=triangle 45,x=1.0cm,y=1.0cm]
                         \fill[fill opacity=0.2] (4.,0.) -- (6.,2.)
                         -- (2.,2.) -- cycle; \draw (0.,0.)--
                         (6.,0.); \draw (6.,0.)-- (6.,6.); \draw
                         (6.,6.)-- (0.,0.); \draw (4.,0.)-- (6.,2.);
                         \draw (6.,2.)-- (2.,2.); \draw (2.,2.)--
                         (4.,0.); \draw (2.,2.)-- (4.,0.); \draw
                         (4.,0.)-- (6.,2.); \draw (6.,2.)-- (2.,2.);
                       \end{tikzpicture}
                             &\begin{tikzpicture}[scale=0.3,line cap=round,line join=round,>=triangle 45,x=1.0cm,y=1.0cm]
                               \fill[fill opacity=0.2] (4.,0.) --
                               (6.,1.1547005383792512) --
                               (4.,2.3094010767585025) -- cycle;
                               \draw (0.,0.)-- (6.,0.); \draw
                               (6.,0.)-- (6.,3.464101615137754);
                               \draw (6.,3.464101615137754)--
                               (0.,0.); \draw (4.,0.)--
                               (6.,1.1547005383792512); \draw
                               (6.,1.1547005383792512)--
                               (4.,2.3094010767585025); \draw
                               (4.,2.3094010767585025)-- (4.,0.);
                               \draw (4.,0.)--
                               (6.,1.1547005383792512); \draw
                               (6.,1.1547005383792512)--
                               (4.,2.3094010767585025); \draw
                               (4.,2.3094010767585025)-- (4.,0.);
                             \end{tikzpicture}\\
    \Lambda&P\text{ or }R&R&R&R&R\\
    \hline
  \end{array}
\]
We make no claim of completeness. In particular, we considered only
untwisted groups. The letters $P$ and $R$ denote the weight and the
root lattice of $K$, respectively. At each vertex, the complexified
centralizer $L$ is isogenous to $SL(2,\CC)\times\CC^*$. Then one can
show that the local models are either of the form $X=SL(2,\CC)/\mu_n$
in case $\cP$ touches $\cA$ in form of a reflection and
$X=SL(2,\CC)\times^{\CC^*}\CC$ if a right angle is involved.

\begin{remark}

  As communicated by Eckhart Meinrenken, the first case has also been
  found by Chris Woodward (unpublished).

\end{remark}

\begin{bibdiv}
  \begin{biblist}

    \bib{AMM}{article}{ author={Alekseev, Anton}, author={Malkin,
        Anton}, author={Meinrenken, Eckhard}, title={Lie group valued
        moment maps}, journal={J. Differential Geom.}, volume={48},
      date={1998}, number={3}, pages={445--495},
      arxiv={dg-ga/9707021}, }
		
    \bib{AMW}{article}{ author={Alekseev, Anton}, author={Meinrenken,
        Eckhard}, author={Woodward, Chris}, title={Duistermaat-Heckman
        measures and moduli spaces of flat bundles over surfaces},
      journal={Geom. Funct. Anal.}, volume={12}, date={2002},
      number={1}, pages={1--31}, arxiv={math/9903087}, }

    \bib{BoYa}{article}{ author={Boalch,Philip}, author={Yamakawa,
        Daisuke}, title={ Twisted wild character varieties},
      journal={Preprint}, date={2015}, pages={26 pages},
      arxiv={1512.08091}, }

    \bib{Bou}{book}{ author={Bourbaki, Nicolas}, title={Éléments de
        mathématique. Fasc. XXXIV. Groupes et algèbres de
        Lie. Chapitre IV: Groupes de Coxeter et systèmes de
        Tits. Chapitre V: Groupes engendrés par des
        réflexions. Chapitre VI: systèmes de racines},
      series={Actualités Scientifiques et Industrielles, No. 1337},
      publisher={Hermann}, place={Paris}, date={1968}, pages={288
        pp. (loose errata)}, }

    \bib{BraviPezzini}{article}{ author={Bravi, Paolo},
      author={Pezzini, Guido}, title={The spherical systems of the
        wonderful reductive subgroups}, journal={J. Lie Theory},
      volume={25}, date={2015}, pages={105--123}, }

    \bib{Brion}{article}{ author={Brion, Michel}, title={Sur l'image
        de l'application moment}, language={French}, conference={
        title={Séminaire d'algèbre Paul Dubreil et Marie-Paule
          Malliavin (Paris, 1986)}, }, book={ series={Lecture Notes in
          Math.}, volume={1296}, publisher={Springer, Berlin}, },
      date={1987}, pages={177--192}, }

    \bib{Delzant}{article}{ author={Delzant, Thomas},
      title={Classification des actions hamiltoniennes complètement
        intégrables de rang deux}, journal={Ann. Global Anal. Geom.},
      volume={8}, date={1990}, number={1}, pages={87--112}, }

    \bib{Eshmatov}{article}{ author={Eshmatov, Alimjon}, title={A new
        example of a group-valued moment map}, journal={J. Lie
        Theory}, volume={19}, date={2009}, number={2},
      pages={395--407}, arxiv={0710.3844}, }

    \bib{GWZ}{article}{ author={Grove, Karsten}, author={Wilking,
        Burkhard}, author={Ziller, Wolfgang}, title={Positively curved
        cohomogeneity one manifolds and 3-Sasakian geometry},
      journal={J. Differential Geom.}, volume={78}, date={2008},
      number={1}, pages={33--111}, arxiv={math/0511464}, }

    \bib{GS}{book}{ author={Guillemin, Victor}, author={Sjamaar,
        Reyer}, title={Convexity properties of Hamiltonian group
        actions}, series={CRM Monograph Series}, volume={26},
      publisher={American Mathematical Society, Providence, RI},
      date={2005}, pages={iv+82}, }

    \bib{HJS}{article}{ author={Hurtubise, Jacques}, author={Jeffrey,
        Lisa}, author={Sjamaar, Reyer}, title={Group-valued implosion
        and parabolic structures}, journal={Amer. J. Math.},
      volume={128}, date={2006}, number={1}, pages={167--214},
      arxiv={math/0402464}, }

    \bib{Kac}{book}{ author={Kac, Victor}, title={Infinite-dimensional
        Lie algebras}, edition={3}, publisher={Cambridge University
        Press}, place={Cambridge}, date={1990}, pages={xxii+400}, }

    \bib{KnopConvexity}{article}{ author={Knop, Friedrich},
      title={Convexity of Hamiltonian manifolds}, journal={J. Lie
        Theory}, volume={12}, date={2002}, number={2},
      pages={571--582}, arxiv={math/0112144}, }

    \bib{KnopAuto}{article}{ author={Knop, Friedrich},
      title={Automorphisms of multiplicity free Hamiltonian
        manifolds}, journal={J. Amer. Math. Soc.}, volume={24},
      date={2011}, number={2}, pages={567--601}, arxiv={1002.4256}, }

    \bib{KPVS}{article}{ author={Knop, Friedrich}, author={Pezzini,
        Guido}, author={Van Steirteghem, Bart}, title={Weight monoids
        of smooth affine spherical varieties}, journal={Preprint (in
        preparation)}, }

    \bib{KVS}{article}{ author={Knop, Friedrich}, author={Van
        Steirteghem, Bart}, title={Classification of smooth affine
        spherical varieties}, journal={Transform. Groups},
      volume={11}, date={2006}, number={3}, pages={495--516},
      arxiv={math/0505102}, }

    \bib{Losev}{article}{ author={Losev, Ivan V.}, title={Proof of the
        Knop conjecture}, journal={Ann. Inst. Fourier (Grenoble)},
      volume={59}, date={2009}, number={3}, pages={1105--1134},
      arxiv={math/0612561}, }

    \bib{Luna}{article}{ author={Luna, Domingo}, title={La variété
        magnifique modèle}, journal={J. Algebra}, volume={313},
      date={2007}, number={1}, pages={292--319}, }

    \bib{Mac1}{article}{ author={Macdonald, Ian}, title={Affine root
        systems and Dedekind's $\eta$-function},
      journal={Invent. Math.}, volume={15}, date={1972},
      pages={91--143}, }

    \bib{Mac2}{book}{ author={Macdonald, Ian}, title={Affine Hecke
        algebras and orthogonal polynomials}, series={Cambridge Tracts
        in Mathematics}, volume={157}, publisher={Cambridge University
        Press}, place={Cambridge}, date={2003}, pages={x+175}, }

    \bib{Matsuki1}{article}{ author={Matsuki, Toshihiko},
      title={Double coset decompositions of algebraic groups arising
        from two involutions. I}, journal={J. Algebra}, volume={175},
      date={1995}, pages={865--925}, }
		
    \bib{Matsuki2}{article}{ author={Matsuki, Toshihiko},
      title={Double coset decompositions of reductive Lie groups
        arising from two involutions}, journal={J. Algebra},
      volume={197}, date={1997}, pages={49--91}, }

    \bib{Matsuki3}{article}{ author={Matsuki, Toshihiko},
      title={Classification of two involutions on compact semisimple
        Lie groups and root systems}, journal={J. Lie Theory},
      volume={12}, date={2002}, pages={41--68}, }

    \bib{Meinrenken}{article}{ author={Meinrenken,Eckhard},
      title={Convexity for twisted conjugation}, journal={Preprint},
      date={2015}, pages={14 pages}, arxiv={1512.09000}, }

    \bib{Moerdijk}{article}{ author={Moerdijk, Ieke},
      title={Introduction to the language of stacks and gerbes},
      arxiv={math/0212266}, }

    \bib{Mohr}{article}{ author={Mohrdieck, S.}, title={Conjugacy
        classes of non-connected semisimple algebraic groups},
      journal={Transform. Groups}, volume={8}, date={2003},
      number={4}, pages={377--395}, }

    \bib{MW}{article}{ author={Mohrdieck, Stephan}, author={Wendt,
        Robert}, title={Integral conjugacy classes of compact Lie
        groups}, journal={Manuscripta Math.}, volume={113},
      date={2004}, number={4}, pages={531--547}, arxiv={math/0303118},
    }

    \bib{Paulus}{article}{ author={Paulus, Kay}, title={Some momentum
        polytopes for multiplicity free quasi-Hamiltonian manifolds},
      journal={Thesis (in preparation)}, }

    \bib{PVS}{article}{ author={Pezzini, Guido}, author={Van
        Steirteghem, Bart}, title={Combinatorial characterization of
        the weight monoids of smooth affine spherical varieties},
      journal={Preprint}, date={2015}, pages={36 pages},
      arxiv={1510.04266},}

    \bib{Segal}{article}{ author={Segal, Graeme}, title={The
        representation ring of a compact Lie group},
      journal={Inst. Hautes \'Etudes Sci. Publ. Math.}, number={34},
      date={1968}, pages={113--128}, }

    \bib{Sjamaar}{article}{ author={Sjamaar, Reyer}, title={Convexity
        properties of the moment mapping re-examined},
      journal={Adv. Math.}, volume={138}, date={1998}, number={1},
      pages={46--91}, arxiv={dg-ga/9408001}, }

    \bib{BvS}{article}{author={Van Steirteghem, Bart}, title={Various
        interpretations of the root system(s) of a spherical variety},
      conference={title={Mini-workshop: Spherical Varieties and
          Automorphic Representations, May 12--18, 2013}},
      book={title={Oberwolfach Rep.}, volume={10}, number={2},},
      date={2013}, pages={1464--1467}, }

    \bib{Vin}{article}{ author={Vinberg, Ernest}, title={Discrete
        linear groups that are generated by reflections},
      journal={Izv. Akad. Nauk SSSR Ser. Mat.}, volume={35},
      date={1971}, pages={1072--1112}, }

    \bib{VK}{article}{ author={Vinberg, Ernest},
      author={Kimel{\cprime}fel{\cprime}d, B. N.}, title={Homogeneous
        domains on flag manifolds and spherical subsets of semisimple
        Lie groups}, journal={Funktsional. Anal. i Prilozhen.},
      volume={12}, date={1978}, number={3}, pages={12--19, 96}, }

  \end{biblist}
\end{bibdiv}
\end{document}